  \title{Convergence in strongly monotone systems with an increasing first integral}
\author{Murad Banaji\footnotemark[1] \footnotemark[4]
\and David Angeli\footnotemark[2]}
\begin{document}

\maketitle

\renewcommand{\thefootnote}{\fnsymbol{footnote}}

\footnotetext[1]{Dept. of Mathematics, University College London, UK, and Dept. of Biological Sciences, University of Essex, Colchester, UK. }
\footnotetext[2]{Dept. of Electrical and Electronic Engineering, Imperial College London, UK, and Dip. Sistemi e Informatica, University of Florence, Italy.}
\footnotetext[4]{m.banaji@ucl.ac.uk. Research funded by EPSRC grant EP/D060982/1.}

\renewcommand{\thefootnote}{\arabic{footnote}}

\begin{abstract}
In this paper we generalise a useful result due to J. Mierczy\'nski which states that for a strictly cooperative system on the positive orthant, with increasing first integral, all bounded orbits are convergent. Moreover any equilibrium attracts its entire level set, and there can be no more than one equilibrium on any level set. Here, more general state spaces and more general orderings are considered. Let $Y \subset K \subset \mathbb{R}^n$ be any two proper cones. Given a local semiflow $\phi$ on $Y$ which is strongly monotone with respect to $K$, and which preserves a $K$-increasing first integral, we show that every bounded orbit converges. Again, each equilibrium attracts its entire level set, and there can be no more than one equilibrium on any level set. An application from chemical dynamics is provided.
\end{abstract}

\begin{keywords}
First integral; strongly monotone system; global convergence
\end{keywords}

\begin{AMS}
34A26; 34C12; 34D23; 06A06
\end{AMS}

\section{Introduction}
The study of the qualitative behaviour of dynamical systems is a vast subject with applications in many fields. In particular monotone systems, i.e. systems which preserve some partial order on the state space, have been intensively studied, with a range of qualitative results on asymptotic behaviour in these systems. See \cite{hirschsmithalt} for a recent survey or \cite{halsmith} for an earlier monograph on the subject. When the state space is some subset of Euclidean space, and the preserved partial order is the ``natural'' order generated by the positive orthant, we get so-called cooperative systems. The fundamental notions connected with cooperativity extend to more general orderings (\cite{Walcher} for example). 

Monotonicity constrains the behaviour of dynamical systems, for example ruling out attracting nontrivial periodic orbits, provided at least one point of any periodic orbit it accessible from above or below \cite{hirschsmithalt}. When a dynamical system is strongly monotone (to be defined below) behaviour is constrained further: for almost all initial conditions bounded solutions converge to the set of equilibria, a result initially proved for strongly cooperative systems by M. Hirsch in \cite{hirsch85}. Sometimes generic convergence claims can be strengthened, provided additional structure is available. For instance, global convergence (i.e. convergence of {\em every} bounded orbit) can be obtained in a variety of special cases: for tridiagonal strongly cooperative systems \cite{smillie}; when a system enjoys so-called ``positive translation invariance'' \cite{as}; and when a strongly cooperative system is endowed with a strictly increasing first integral (the result of Mierczy\'nski \cite{mierczynski} to be generalised here). In this latter case, the conclusions are stronger still: there can be no more than one equilibrium on each level set of the first integral, and when it exists, such an equilibrium attracts the whole level set. In the same spirit is Theorem 5 of \cite{leenheer}, which shows how for lattice state spaces, and provided a unique equilibrium exists, all bounded solutions converge to this equilibrium.  

The importance of Mierczy\'nski's result stems from the fact that in a variety of applications natural constraints lead to order preservation, while at the same time conservation laws define preserved functions. However, as shown for chemical reaction networks in \cite{banajidynsys,angelileenheersontag}, the preserved partial orders may not be induced by orthants, and indeed, may not be induced by simplicial cones. Thus appropriate generalisations of Mierczy\'nski's result potentially have useful application in these areas. A small example of such an application will be presented later.\\

\section{The main result}
We state the main result and outline the proof.

\vspace{0.25cm}
\begin{definition}
A {\bf proper cone} in $\mathbb{R}^n$ will be defined as a closed, convex, pointed cone with nonempty interior \cite{berman}. 
\end{definition}
\vspace{0.25cm}

Let $Y, K$ be proper cones in $\mathbb{R}^n$ with $K \supset Y$. From now on, all inequalities are with respect to the ordering defined by $K$, i.e. $x \leq y$ will mean $y - x \in K$, $x < y$ will mean $x \leq y$ and $x \not = y$, $x \ll y$ will mean $y - x \in \mathrm{int}(K)$, etc. Define $K^{*}$ to be dual cone to $K$, i.e. $K^* = \{y \in \mathbb{R}^n\,|\, \langle y, k \rangle \geq 0\,\,\mbox{for all}\,\, k \in K\}$. Consider a system 
\begin{equation}
\label{eq1}
\dot x = F(x)
\end{equation}
on $Y$, where $F(x)$ is locally Lipschitz and so defines a local semiflow $\phi$ on $Y$. Assume that:
\begin{enumerate}
\item $\phi$ is {\bf strongly monotone} with respect to $K$, i.e. $x > y \Rightarrow \phi_t(x) \gg \phi_t(y)$ for all $t > 0$ such that $\phi_t(x)$ and $\phi_t(y)$ are defined).
\item The system has a $C^1$ {\bf first integral} $H:Y \to \mathbb{R}$, such that for each $y \in Y$, i)~$\langle \nabla H(y), F(y) \rangle = 0$ and ii) $\nabla H(y) \in \mathrm{int}(K^*)$. 
\end{enumerate}

{\bf Remarks.} Since $K$ is proper, $K^*$ is automatically a proper cone \cite{berman}, and hence has nonempty interior. From here on if we refer to $\phi_t(y)$, the assumption is that $t$ is in the interval of existence of the solution taking initial value $y$. 

For convenience, and without loss of generality, we assume $H(0) = 0$. Given $x, y \in Y$ with $y > x$, by convexity of $Y$, the line segment between $x$ and $y$ lies in $Y$, and by integrating $\nabla H$ along this line segment, we get $H(y) > H(x)$. 
This implies that the level sets of $H$ are unordered, and since $H(0) = 0$, $H(y) > 0$ for all $y \in Y\backslash\{0\}$. Denote by $M$ the l.u.b. of the values of $H$, so that $0 < M \leq \infty$. Given any $y \in Y$, there exists $z \in Y$, $z > y$, so $M > H(y)$. As a continuous scalar function on a convex (and hence connected) set, $H:Y \to [0, M)$ is surjective.

\vspace{0.25cm}
\begin{definition}
$S(0,h) \equiv \{x \in Y\,|\, H(x) = h\}$ is the {\bf level set} associated with $h \in [0, M)$.
\end{definition}
\vspace{0.25cm}

\begin{definition}
The {\bf equilibrium set} $E$ is defined as $E \equiv \{y \in Y\,| F(y) = 0\}$.
\end{definition}
\vspace{0.25cm}

Note that $S(0,h)$ is always closed, but may be unbounded. $\langle \nabla H(y), F(y) \rangle = 0$ implies that each $S(0,h)$ is forward invariant under $\phi$. By continuity of $F$, $E$ is closed. The main result of this paper is the following:

\vspace{0.25cm}
\begin{theorem}
\label{mainthm}
There exists some $0 < M^{'} \leq M$ such that
\begin{enumerate}
\item For each $h \in [0, M^{'})$, $S(0,h)$ contains a unique equilibrium to which each orbit on $S(0,h)$ converges.
\item If $M^{'} \not = M$, then for each $h \in [M^{'}, M)$, $S(0, h)$ contains no equilibria, and every orbit on $S(0,h)$ is unbounded. 
\end{enumerate}
\end{theorem}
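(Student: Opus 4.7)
My plan is to establish the theorem in three stages: (a)~every bounded forward orbit converges to an equilibrium in its level set; (b)~if a level set contains an equilibrium, that equilibrium is unique and attracts the whole level set; (c)~the set $A:=\{h\in[0,M):S(0,h)\cap E\ne\emptyset\}$ has the form $[0,M')$ for some $M'\le M$. The stated dichotomy then follows at once.

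For stage (a), fix $x\in Y$ with bounded forward orbit. Then $\omega(x)$ is compact, connected, invariant, and, since $H$ is conserved, contained in $S(0, H(x))$, which by the remarks before Definition~2 is totally unordered. Thus $\omega(x)$ is an antichain of a strongly monotone semiflow. The structural fact $\mathrm{int}(Y)\subset \mathrm{int}(K)$ (any interior point of $Y$ has a full $\mathbb{R}^n$-neighbourhood inside $Y\subset K$, forcing it into $\mathrm{int}(K)$) guarantees that, for any $k\in\mathrm{int}(Y)\cap\mathrm{int}(K)$, strong upper perturbations $y+\epsilon k$ of $y\in\mathrm{int}(Y)$ lie in $Y$. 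Applying strong monotonicity to such perturbations, then passing to $\omega$-limits and letting $\epsilon\to 0$, I expect to squeeze $\omega(x)$ between equilibria at levels converging to $H(x)$ and conclude that $\omega(x)$ is a single equilibrium.

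For stage (b), given $p\in E\cap S(0,h)$ and $x\in S(0,h)$, the strategy is to show the orbit of $x$ is bounded and converges to $p$. Boundedness follows from sandwiching $\phi_t(x)$ in $K$-order between $\phi_t(0)=0$ and $\phi_t(p+\epsilon k)$ (with $\epsilon$ large enough that $p+\epsilon k\gg x$ and controlled enough that the upper orbit remains bounded), using that $K$-order intervals in a proper cone are norm-bounded. By stage (a) the orbit then converges to some equilibrium $r(x)\in S(0,h)$, and a further sandwich argument using joint upper perturbations of $x$ and $p$, letting $\epsilon\to 0$, pins $r(x)\le p$, hence $r(x)=p$ by the unordered-ness of the level set. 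Uniqueness is immediate: any other equilibrium $q\in E\cap S(0,h)$ would satisfy $q=\phi_t(q)\to p$, forcing $q=p$.

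For stage (c), note that $0\in A$ because invariance of $S(0,0)=\{0\}$ forces $F(0)=0$. Next, $A$ is downward-closed: given $p\in E$ at level $h\in A$ and $h'\in[0,h)$, the segment from $0$ to $p$ lies in $Y$ and $H$ increases continuously from $0$ to $h$ along it, so some $z$ on the segment has $H(z)=h'$; monotonicity sandwiches $\phi_t(z)$ between the fixed points $\phi_t(0)=0$ and $\phi_t(p)=p$, so its orbit is bounded and, by (a), converges to an equilibrium at level $h'$. Setting $M':=\sup A$ gives $[0,M')\subseteq A$; an upward-perturbation argument rules out $M'\in A$ when $M'<M$, so $A=[0,M')$. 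For $h\in[M',M)$ the absence of equilibria combined with (a) forces every orbit on $S(0,h)$ to be unbounded. The main obstacle is stage (a): without the lattice and coordinate-wise tools available for the orthant ordering in Mierczy\'nski's proof, collapsing the antichain $\omega(x)$ to a single equilibrium rests entirely on strong-monotonicity sandwiching inside $\mathrm{int}(Y)\cap\mathrm{int}(K)$, and this is where I expect the main analytical difficulty to lie.
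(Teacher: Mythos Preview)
Your proposal has genuine gaps, the most serious in stage~(a), which you rightly flag as the crux.

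\textbf{Stage (a) is circular.} To squeeze $\omega(x)$ between equilibria at levels converging to $H(x)$ you need, in particular, equilibria at levels just \emph{above} $H(x)$. Your only mechanism for producing them is to perturb to $x+\epsilon k$ and argue that \emph{its} orbit converges to an equilibrium --- which is stage~(a) again for the perturbed point. Strong monotonicity does give $\omega(x)\ll\omega(x+\epsilon k)$ (the two sets sit on distinct $H$-levels, so cannot coincide), but nothing forces $\omega(x+\epsilon k)$ to be a singleton, and nothing in your outline breaks this loop. Even granting $\omega(x)\subset E$ (itself not automatic for a general unordered compact invariant set), collapsing it to a point still requires $|E\cap S(0,H(x))|\le 1$ --- the uniqueness half of your stage~(b) --- and your argument for that invokes stage~(a). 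Without a lattice structure there is no cheap route to uniqueness of equilibria on a level set.

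\textbf{Unjustified boundedness.} In stage~(b) you need $p+\epsilon k\gg x$, which forces $\epsilon$ large (since $p-x$ is merely unordered); but boundedness of the orbit of $p+\epsilon k$ needs the set $\{y\ge p:H(y)=H(p+\epsilon k)\}$ compact, which holds only for $H(p+\epsilon k)$ close to $H(p)$, i.e.\ $\epsilon$ small. These constraints can be incompatible. The same compactness is needed for the upward perturbation in stage~(c); it is genuinely nontrivial (it is the paper's Lemma~\ref{boundedD}) and uses $\nabla H\in\mathrm{int}(K^*)$ uniformly on compacta.

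\textbf{The paper's route is different in kind.} Rather than analysing $\omega$-limit sets, the paper first pins down the structure of $E$. The bulk of the work (Lemma~\ref{mainhomeolem}, via central projections) shows that each $S(c,h)$ for $c\in E$ and $h$ near $H(c)$ is a topological ball, hence contains an equilibrium; from this it deduces that $E$ is totally ordered (Lemma~\ref{orderedeq}), homeomorphic to $[0,M')$, and unbounded. Only then does it define $Q(y)$ as the unique point of $(y-\partial K)\cap E$ and the Lyapunov function $L(y)=H(Q(y))$; strong monotonicity makes $L$ strictly increase along nontrivial orbits, and convergence follows by the standard LaSalle-type argument. The circularity you face is avoided because uniqueness on level sets and the full structure of $E$ are established \emph{before} any orbit is shown to converge.
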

\vspace{0.25cm}

{\bf Remark.} The result tells us not only that every bounded orbit of (\ref{eq1}) converges to an equilibrium, but also rules out multiple equilibria on any level set. 

{\bf An immediate corollary.} By insisting that $K \supset Y$, the result is apparently phrased in less generality than possible. However a more general result follows immediately. Consider the case where $Y$ is any forward invariant subset of $\mathbb{R}^n$ containing an equilibrium which (without loss of generality) we take to be at the origin. 
Assume that $Y^{'} \equiv Y \cap K$ is a nonempty, closed, convex, pointed cone: for example, $Y$ may be $\mathbb{R}^n$, in which case $Y^{'} = K$, or $Y$ may be any other closed cone, not necessarily convex, which intersects $K$. $Y^{'}$ has nonempty interior in $\mathrm{Aff}(Y^{'})$, the smallest affine subspace containing $Y^{'}$, and by easy arguments (Lemma~\ref{forwardinvar} below) it is forward invariant. Replacing $Y$ with $Y^{'}$, $\mathbb{R}^n$ with $\mathrm{Aff}(Y^{'})$, and $K$ with $K \cap \mathrm{Aff}(Y^{'})$ (which is proper in $\mathrm{Aff}(Y^{'})$), Theorem~\ref{mainthm} can immediately be applied to get global convergence on each set $S(0,h) \cap Y^{'}$ which contains an equilibrium. 

{\bf Summary of the arguments}. The proof of Theorem~\ref{mainthm} will be presented after preliminary results. The fundamental geometrical ideas are closely related to those in \cite{mierczynski}. The greater generality however presents some technical difficulties -- for example the fact that $Y$ is not necessarily a lattice under the order induced by $K$ makes observations which would be immediate, such as that the equilibrium set is ordered, harder to prove. 

Ultimately, as in \cite{mierczynski}, we will define a continuous scalar function $L$ which increases strictly along all orbits except equilibria. Given any point $y \in Y$, we will show that the set $y - \partial K$ intersects $E$ at a unique point, $Q(y)$. Uniqueness of $Q(y)$ will follow from the fact that $E$ is embedded in $Y$ in a rather special way: $E$ is totally ordered and homeomorphic to a half-open line segment. $L$ is then defined by $L(y) = H(Q(y))$. It will not be hard to show that the assumption of strong monotonicity implies that $L$ is increasing at any nonequilibrium point. 

\section{Preliminaries}
\label{secprelim}

{\bf Notation.} Given any set $X \subset \mathbb{R}^n$ the smallest affine subspace of $\mathbb{R}^n$ containing $X$ will be termed $\mathrm{Aff}(X)$. We will refer to the relative interior of $X$ and the relative boundary of $X$ with respect to $\mathrm{Aff}(X)$ as $\mathrm{ri}(X)$ and $\mathrm{relbd}(X)$ respectively. If we refer to the relative interior of $X$ with respect to a set other than $\mathrm{Aff}(X)$, then this will be made clear.

\vspace{0.25cm}

Note the following basic properties of convex sets \cite{nikaido,websterconvexity}:
\begin{enumerate}
\item[{\bf O1}] Given a proper cone $K \subset \mathbb{R}^n$, some $p \in \mathrm{int}(K), y \in K$, then $p + y \in \mathrm{int}(K)$.
\item[{\bf O2}] Given a closed, convex set $X \subset \mathbb{R}^n$, some $p \in X$, and any $y \in \mathbb{R}^n$, the ray $\{p+ty\,|\, t \geq 0\}$ either lies in $X$ or there exists $t^{'} \geq 0$ such that $p+ty \in X$ for $0 \leq t \leq t^{'}$ and $p+ty \not \in X$ for $t > t^{'}$. We say that the ray ``exits $X$'' at $p+t^{'}y$. If $p \in \mathrm{ri}(X)$, and $y \in \mathrm{Aff}(X)$, then, provided it exists, $t^{'} > 0$, and $p+t^{'}y$ is the unique point in $\{p+ty\,|\, t \geq 0 \} \cap \mathrm{relbd}(X)$.
\end{enumerate}
\vspace{0.25cm}

\begin{definition}
A {\bf $k$-dimensional ball} in $\mathbb{R}^n$ will be defined as any set homeomorphic to a nonempty, compact, convex, set $X \subset \mathbb{R}^n$ such that $\mathrm{Aff}(X)$ has dimension $k$. We allow $k = 0$, i.e. a $0$-dimensional ball is a single point.
\end{definition}
\vspace{0.25cm}


\begin{lemma}
\label{upperboundlem}
Given any bounded set $X \subset Y$, there exists $z \in Y$, $z > X$. 
\end{lemma}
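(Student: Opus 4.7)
The plan is to take $z$ as a sufficiently large positive multiple of a fixed interior point of $Y$. First I would pick some $p \in \mathrm{int}(Y)$, which is nonempty by the definition of a proper cone. The key observation is that $p \in \mathrm{int}(K)$: indeed the Euclidean ball around $p$ witnessing $p \in \mathrm{int}(Y)$ also lies in $K$, since $Y \subset K$. So I can fix $\epsilon > 0$ with $p + B(0,\epsilon) \subset \mathrm{int}(K)$.

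Next I would exploit boundedness of $X$. Let $R$ be a uniform bound on $\|x\|$ for $x \in X$, and set $z = tp$ for some $t > R/\epsilon$. Then $z \in Y$, because $Y$ is a cone containing $p$. Moreover, for each $x \in X$, $\|x/t\| < \epsilon$, so $p - x/t \in p + B(0,\epsilon) \subset \mathrm{int}(K)$; multiplying by the positive scalar $t$ (which preserves $\mathrm{int}(K)$, since $K$ is a cone) yields $z - x = t(p - x/t) \in \mathrm{int}(K)$. Hence $z \gg x$, and in particular $z > x$, for every $x \in X$.

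The main obstacle, such as it is, amounts only to the elementary verification that $\mathrm{int}(Y) \subset \mathrm{int}(K)$ when both cones share the ambient $\mathbb{R}^n$; once that is in hand, the result reduces to the familiar observation that an interior point of a cone $K$ dominates, in the $K$-ordering, every vector of sufficiently small norm, so scaling absorbs any bounded set of ``perturbations.''
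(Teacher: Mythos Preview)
Your argument is correct and is essentially the same as the paper's: both pick an interior point of $Y$, note that it lies in $\mathrm{int}(K)$ via the inclusion $Y\subset K$, and then scale it by a factor large enough that each $x/t$ falls inside the ball witnessing interiority. The only cosmetic difference is that the paper measures $\epsilon$ as the distance from the chosen point to $\partial Y$, whereas you take any ball contained in $\mathrm{int}(K)$ directly.
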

\begin{proof}
Let $d = \sup_{x \in X}|x|$. Choose any $z^{'} \in \mathrm{int}(Y)$. Since $\partial Y$ is closed, $d_{min} \equiv \inf_{y \in \partial Y}|z^{'}-y| > 0$. Choose $t > d/d_{min}$, so that for any $x \in X$, $|x/t| < d_{min}$, implying $z^{'} - x/t \in \mathrm{int}(K)$. Define $z = tz^{'}$. It is immediate that $z \in \mathrm{int}(Y)$, and moreover $z-x = tz^{'} - x = t(z^{'} - x/t) \in \mathrm{int}(K)$, so $z > X$. 
\end{proof}\\

\begin{lemma}
For all $y \in Y$, $F(y) \not \in K \backslash\{0\}$. Hence $F(0) = 0$. 
\end{lemma}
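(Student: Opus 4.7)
The strategy is to combine the two hypotheses on the first integral $H$: the conservation identity $\langle \nabla H(y), F(y)\rangle = 0$ and the interior condition $\nabla H(y) \in \mathrm{int}(K^*)$. The key tool is a standard characterisation of dual-cone interiors: for a proper cone $K \subset \mathbb{R}^n$, a vector $p$ lies in $\mathrm{int}(K^*)$ if and only if $\langle p, k\rangle > 0$ for every $k \in K \backslash \{0\}$. (One direction uses that a small perturbation $p - \varepsilon k/|k|$ remains in $K^*$; the other uses compactness of $K \cap S^{n-1}$ and continuity of $\langle p, \cdot\rangle$.)

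With this in hand, the first assertion is immediate. Suppose for contradiction that $F(y) \in K \backslash \{0\}$ for some $y \in Y$. Since $\nabla H(y) \in \mathrm{int}(K^*)$, the characterisation above forces $\langle \nabla H(y), F(y)\rangle > 0$, directly contradicting hypothesis (2i). Hence $F(y) \notin K \backslash \{0\}$ for all $y \in Y$.

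For the conclusion $F(0) = 0$, I would argue that $F(0)$ must in fact lie in $K$, and then invoke the first assertion. Since $0 \in Y$ (as $Y$ is a cone) and $\phi$ is defined on $Y$, we have $\phi_t(0) \in Y$ for all sufficiently small $t \geq 0$. Because $Y$ is a cone, $\phi_t(0)/t \in Y$ for every $t > 0$ in this range. Taylor expansion gives $\phi_t(0) = tF(0) + o(t)$, so $\phi_t(0)/t \to F(0)$ as $t \to 0^+$, and closedness of $Y$ yields $F(0) \in Y \subset K$. Combined with the first assertion, which excludes $F(0) \in K \backslash \{0\}$, this forces $F(0) = 0$.

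There is no serious obstacle here; the argument is essentially a two-line application of the defining hypotheses once the $\mathrm{int}(K^*)$ characterisation is recorded. The only minor delicacy is the tangent-cone step at the vertex $0$ of $Y$, which is handled cleanly by the observation that $Y$ is a closed cone.
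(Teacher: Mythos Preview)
Your proof is correct and follows essentially the same route as the paper's. The paper handles the first assertion identically, and for $F(0)=0$ it simply says that otherwise $F(0)\notin Y$, ``violating invariance of $Y$''; your explicit tangent-cone computation $\phi_t(0)/t \to F(0)\in Y$ is just a spelled-out version of that one-line appeal to invariance at the vertex of the cone.
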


\begin{proof}
$\nabla H(y) \in \mathrm{int}(K^*)$ implies that $\langle \nabla H(y), v \rangle > 0$ for all $v \in K\backslash\{0\}$. Consequently, $\langle \nabla H(y), F(y) \rangle = 0$ implies that either $F(y) = 0$ or $F(y) \not \in K$. As a corollary, $F(0) = 0$, since otherwise $F(0) \not \in Y$ 
violating invariance of $Y$.
\end{proof}\\

We now choose some arbitrary but fixed unit vector $g \in \mathrm{int}(K^*)$. Throughout the rest of this paper, $g$ will refer to this vector. To simplify the arguments to follow, some notation is collected in Table~\ref{tableofnotation} below. 

\begin{table}[h]
\begin{tabular}{|p{0.46\textwidth}p{0.46\textwidth}|}
\hline
$c^{+} \equiv Y \cap (c+K)$ & $c^{-} \equiv Y \cap (c-K)$\\
$c^{\partial} \equiv Y \cap ((c + \partial K) \cup (c - \partial K))$ & $P(g, r)\equiv \{y \in \mathbb{R}^n\,|\,\langle g, y\rangle = r\}$\\
$\Delta_{+}(g, r) \equiv \{y\in \mathbb{R}^n\,|\,\langle g, y\rangle \geq r\}$ & $\Delta_{-}(g, r) \equiv \{y\in \mathbb{R}^n\,|\,\langle g, y\rangle \leq r\}$\\
$P_{+}(c, g, r) \equiv P(g, r) \cap c^{+}$ & $P_{-}(c, g, r) \equiv P(g, r) \cap c^{-}$\\
$\Delta_{+}(c,g, r) \equiv \Delta_{-}(g, r) \cap c^{+}$ & $\Delta_{-}(c,g, r) \equiv \Delta_{+}(g, r) \cap c^{-}$\\
$S_{+}(c,h) \equiv \{y \in c^{+}\,|\,H(y) = h\}$ & $S_{-}(c,h) \equiv \{y \in c^{-}\,|\,H(y) = h\}$\\
$D_{+}(c,h) \equiv \{y \in c^{+}\,|\,H(y) \leq h\}$ & $D_{-}(c,h) \equiv \{y \in c^{-}\,|\,H(y) \geq h\}$\\
$P(c, g, r) \equiv \left \{ \begin{array}{ll}P_{+}(c, g, r), & r \geq \langle g, c \rangle\\P_{-}(c, g, r), & r < \langle g, c \rangle\end{array}\!\!\right.$ & $\Delta(c, g, r) \equiv \left \{ \begin{array}{ll}\Delta_{+}(c, g, r), & \!r \geq \langle g, c \rangle\\ \Delta_{-}(c, g, r), & \!r < \langle g, c \rangle \end{array}\!\!\!\right.$\\
$S(c, h) \equiv \left\{ \begin{array}{ll}S_{+}(c, h), & h \geq H(c)\\S_{-}(c, h), & h < H(c)\end{array}\right.$ & $D(c, h) \equiv \left\{ \begin{array}{ll}D_{+}(c, h), & h \geq H(c)\\D_{-}(c, h), & h < H(c)\end{array}\right.$\\
$r_{c\to x}  \equiv \{c + tx\,|\,t \geq 0\}$ & $[c,x] \equiv \{(1-\lambda) c + \lambda x\,|\, \lambda \in [0,1]\}$\\
\hline
\end{tabular}
\caption{\label{tableofnotation}Some notation. $c, x$ are any vectors in $Y$, $r \in [0, \infty)$ and $h \in [0, M)$. $c^{+}$ is the set of points in $Y$ which are greater than or equal to $c$, while $c^{-}$ is analogously defined. $c^{+}, c^{-}$ are closed and convex, as the intersection of closed, convex sets. $P(g, r)$ is an $n-1$ dimensional unordered hyperplane, parallel to $g^\perp$. $\Delta_{+}(g, r)$ and $\Delta_{-}(g, r)$ are the associated half-spaces. $\Delta_{+}(c, g, r)$ is the area of $c^{+}$ bounded above by $P(g, r)$, while $\Delta_{-}(c,g, r)$ is the area of $c^{-}$ bounded below by $P(g, r)$. Assuming $x \not = c$, $r_{c \to x}$ is the ray originating at $c$ and passing through $x$ (which we will refer to as a ``nontrivial ray''), and $[c,x]$ is the closed line segment connecting $c$ and $x$. Justification for the definitions of $P(c, g, r), \Delta(c, g, r), S(c, h)$ and $D(c, h)$ is presented in Lemmas~\ref{notate1}~and~\ref{notate2}.}
\end{table}

\vspace{0.25cm}
\begin{lemma}
\label{nobordereq}
Let $c \in E$. Then there are no equilibria in $c^{\partial}\backslash\{c\}$, and in particular, $E \subset \{0 \} \cup (Y \backslash \partial K)$.
\end{lemma}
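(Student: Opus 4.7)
The plan is to prove the first statement by direct contradiction using strong monotonicity, and then derive the second statement as a corollary by applying the first with $c = 0$.

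For the first statement, suppose for contradiction that some $y \in E$ lies in $c^{\partial} \setminus \{c\}$. By the definition of $c^{\partial}$, either $y - c \in \partial K \setminus \{0\}$ or $c - y \in \partial K \setminus \{0\}$; the two cases are symmetric, so assume the former. Then $y > c$ in the $K$-ordering, but $y - c \notin \mathrm{int}(K)$, so $y \not\gg c$. Since both $y$ and $c$ are equilibria, $\phi_t(y) = y$ and $\phi_t(c) = c$ for all $t \geq 0$, so strong monotonicity applied to the inequality $y > c$ at any $t > 0$ yields $y = \phi_t(y) \gg \phi_t(c) = c$, contradicting $y - c \in \partial K$. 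This gives the first claim.

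For the second statement, recall from the preceding lemma that $0 \in E$. Since $Y \subset K$ and $K$ is pointed ($K \cap (-K) = \{0\}$), we have $Y \cap (-\partial K) \subset Y \cap (-K) = \{0\}$, whence
\[
0^{\partial} = Y \cap \big((\partial K) \cup (-\partial K)\big) = Y \cap \partial K.
\]
Applying the first part with $c = 0$ shows that $E$ contains no point of $(Y \cap \partial K) \setminus \{0\}$, which is exactly the statement $E \subset \{0\} \cup (Y \setminus \partial K)$.

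There is no substantive obstacle: the proof is essentially immediate once one recognises that strong monotonicity, applied to two equilibria at any $t > 0$, forces the ordering between them to be strict in the interior sense. The only care needed is in handling the two cases in the definition of $c^{\partial}$ (which are symmetric) and in noting that $Y \subset K$ together with pointedness of $K$ eliminates the $-\partial K$ half of $0^{\partial}$ except at the origin.
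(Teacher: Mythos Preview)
Your proof is correct and follows essentially the same approach as the paper: derive a contradiction from strong monotonicity applied to two equilibria related by $\partial K$, then specialise to $c=0$ for the corollary. Your treatment of the second statement is slightly more explicit than the paper's (you compute $0^{\partial}=Y\cap\partial K$ using $Y\subset K$ and pointedness, whereas the paper simply says ``Since $0$ is an equilibrium, all other equilibria lie in $Y\setminus\partial K$''), but the underlying argument is identical.
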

\begin{proof}
Assume the contrary and consider an equilibrium $c_1 \in c+\partial K$ ($c_1 \not = c$). Then $\phi_t(c_1) - \phi_t(c) = c_1 - c \in \partial K$ for $t > 0$. But by the assumption of strong monotonicity, $\phi_t(c_1) - \phi_t(c) \in \mathrm{int}(K)$ for $t > 0$, a contradiction. The argument is similar if $c_1 \in c-\partial K$. Since $0$ is an equilibrium, all other equilibria lie in $Y \backslash \partial K$.
\end{proof}\\

\begin{lemma}
\label{forwardinvar}
Let $c \in E$. Then $c^{+}$ and $c^{-}$ are forward invariant.
\end{lemma}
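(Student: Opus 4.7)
The strategy is immediate once one exploits the fact that $c$ is an equilibrium and so $\phi_t(c)=c$ for every $t$ in the domain of definition, combined with (strong) monotonicity applied to the inequality relating an arbitrary point of $c^{+}$ (or $c^{-}$) to $c$.

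More concretely, to show $c^{+}$ is forward invariant, take any $x \in c^{+}$, so that $x \in Y$ and $x \geq c$. If $x = c$ then $\phi_t(x) = c \in c^{+}$ trivially, so assume $x > c$. Strong monotonicity then gives $\phi_t(x) \gg \phi_t(c) = c$ for each $t>0$ in the interval of existence; equivalently $\phi_t(x) - c \in \mathrm{int}(K) \subset K$, so $\phi_t(x) \in c + K$. Since $\phi$ is a semiflow on $Y$, we also have $\phi_t(x) \in Y$, and hence $\phi_t(x) \in Y \cap (c+K) = c^{+}$. The case $t=0$ is immediate. The argument for $c^{-}$ is identical after reversing the inequality: if $x \leq c$, then either $x=c$ (trivial) or $c > x$, in which case strong monotonicity yields $c = \phi_t(c) \gg \phi_t(x)$, so $c - \phi_t(x) \in \mathrm{int}(K) \subset K$, i.e., $\phi_t(x) \in c - K$, and again $\phi_t(x) \in Y$.

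There is no real obstacle here: the only fact used beyond the hypotheses is the observation that strong monotonicity in the sense stated (\textit{strict} inequality is mapped to \textit{strong} inequality) suffices, because the case of equality is handled by $\phi_t(c) = c$. It is worth noting that this is the first place in the paper where being an equilibrium of $F$ is used to derive forward invariance of an order cone shifted to $c$; this will be the natural tool for localising later arguments about the equilibrium set $E$ near a given $c \in E$.
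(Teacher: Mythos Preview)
Your proof is correct and follows essentially the same approach as the paper: use that $c$ is an equilibrium so $\phi_t(c)=c$, apply order preservation to conclude $\phi_t(x)\in c\pm K$, and combine with forward invariance of $Y$. The only cosmetic difference is that the paper invokes plain monotonicity (which strong monotonicity of course implies) and so avoids splitting into the cases $x=c$ and $x>c$.
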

\begin{proof}
If $y \geq c$, then by monotonicity $\phi_t(y) \geq \phi_t(c) = c$ for all $t \geq 0$ -- i.e. if $y \in c+K$, then $\phi_t(y) \in c+K$. A similar argument shows that $c-K$ is forward invariant. As $Y$ is invariant by assumption, $c^{+}$ and $c^{-}$ are the intersection of forward invariant sets and are hence forward invariant. 
\end{proof}\\

Lemmas~\ref{notate1}~and~\ref{notate2} below clarify the definitions of $P(c, g, r)$, $\Delta(c, g, r)$, $S(c, h)$ and $D(c, h)$ in Table~\ref{tableofnotation}. The situations are represented schematically in Figure~\ref{basic0}.

\vspace{0.25cm}
\begin{lemma}
\label{notate1}
Given any $c \in Y$ and $r \geq 0$:
\begin{enumerate}
\item If $r > \langle g, c\rangle$, then $P_{-}(c, g, r), \Delta_{-}(c,g, r)$ are empty and $P_{+}(c, g, r), \Delta_{+}(c,g, r)$ are nonempty. 
\item If $r < \langle g, c\rangle$, then $P_{+}(c, g, r)$, $\Delta_{+}(c,g, r)$ are empty and $P_{-}(c, g, r)$, $\Delta_{-}(c,g, r)$ are nonempty. 
\item If $r = \langle g, c\rangle$, then  $P_{-}(c, g, r) = P_{+}(c, g, r) = \Delta_{-}(c, g, r) = \Delta_{+}(c, g, r) = \{c\}$.
\end{enumerate}
\end{lemma}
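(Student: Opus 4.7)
The plan rests on a single inequality. Since $g \in \mathrm{int}(K^*)$, the linear functional $\langle g, \cdot \rangle$ is strictly positive on $K \setminus \{0\}$, so for $y \in c + K$ one has $\langle g, y \rangle \geq \langle g, c \rangle$, with equality only when $y = c$; dually on $c - K$. This observation disposes of all the emptiness assertions at once: in part (1), any $y \in c^-$ satisfies $\langle g, y \rangle \leq \langle g, c \rangle < r$, so $y$ lies neither in $P(g,r)$ nor in $\Delta_+(g,r)$, and part (2) is symmetric.

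For nonemptiness, the $\Delta$ sets are immediate witnesses: whenever $r \geq \langle g, c \rangle$ the point $c$ itself lies in $\Delta_+(c,g,r)$, and symmetrically $c \in \Delta_-(c,g,r)$ when $r \leq \langle g, c \rangle$. This also settles part (3): any $y \in c^+$ with $\langle g, y \rangle \leq r = \langle g, c \rangle$ must saturate the basic inequality, forcing $y = c$, and analogously for $c^-$, so all four sets collapse to $\{c\}$.

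The only real step is producing points in $P_+(c,g,r)$ when $r > \langle g, c \rangle$ and in $P_-(c,g,r)$ when $r < \langle g, c \rangle$. For the former I would fix $z' \in \mathrm{int}(Y) \subset \mathrm{int}(K)$ (the inclusion holds because a $Y$-neighborhood of $z'$ also sits in $K$), note that $\langle g, z' \rangle > 0$, and rescale: by the argument of Lemma \ref{upperboundlem}, $z := t z'$ lies in $c^+$ for $t$ large, and taking $t$ larger still forces $\langle g, z \rangle \geq r$. The segment $[c,z]$ lies in $c^+$ by convexity, and the intermediate value theorem applied to $\lambda \mapsto \langle g, (1-\lambda)c + \lambda z \rangle$ produces the desired point. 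For $P_-(c,g,r)$ I would use the segment $[0,c] \subset c^-$ instead, exploiting that $0 \in Y$ and $c \in Y \subset K$ force $0 \in c^-$, and that $\langle g, \cdot \rangle$ slides continuously from $0 \leq r$ up to $\langle g, c \rangle > r$. I do not foresee any genuine obstacle; the only subtle point worth flagging explicitly is the inclusion $\mathrm{int}(Y) \subset \mathrm{int}(K)$, so that $g$ does not vanish on the scaling direction.
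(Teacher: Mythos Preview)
Your argument is correct. The emptiness claims and part (3) proceed exactly as in the paper, via the basic inequality $\langle g, x\rangle > 0$ for $x \in K\setminus\{0\}$.

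For nonemptiness of $P_{\pm}$ you take a slightly more indirect path than the paper. The paper simply writes down explicit witnesses: when $c \neq 0$, set $t = r/\langle g, c\rangle$ and observe that $tc$ lies in $P_{+}(c,g,r)$ if $t>1$ and in $P_{-}(c,g,r)$ if $0 \le t < 1$; the case $c=0$ is handled separately by rescaling any nonzero $y \in Y$. Your approach via Lemma~\ref{upperboundlem} and the intermediate value theorem works too and has the mild advantage of treating $c=0$ and $c\neq 0$ uniformly, at the cost of invoking an auxiliary lemma where a one-line formula suffices. Incidentally, the inclusion $\mathrm{int}(Y)\subset\mathrm{int}(K)$ that you flag is not actually needed: you only require $\langle g, z'\rangle > 0$, which follows already from $z' \in K\setminus\{0\}$.
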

\begin{proof}
For any $x \in K\backslash\{0\}$, $\langle g, x \rangle  > 0$, and so $\langle g, c + x \rangle > \langle g, c \rangle$ and  $\langle g, c - x \rangle < \langle g, c \rangle$. It follows immediately that if $r > \langle g, c\rangle$, then $P_{-}(c, g, r), \Delta_{-}(c,g, r)$ are empty, if $0 \leq r < \langle g, c\rangle$, then $P_{+}(c, g, r), \Delta_{+}(c,g, r)$ are empty, and if $r = \langle g, c\rangle$, then  $P_{-}(c, g, r) = P_{+}(c, g, r) = \Delta_{-}(c, g, r) = \Delta_{+}(c, g, r) = \{c\}$. For any $y \in Y\backslash\{0\}$ and $r > 0$, $ry/\langle g, y \rangle \in P_{+}(0, g, r)$. For $c \not = 0$, define $t = r/\langle g, c \rangle$. It is easy to check that if $r > \langle g, c\rangle$, then $tc \in P_{+}(c, g, r)$, and if $0 \leq r < \langle g, c\rangle$, then $tc \in P_{-}(c, g, r)$, proving the remaining claims. 
\end{proof}

\vspace{0.25cm}
\begin{lemma}
\label{notate2}
Given any $c \in Y$, $h \in [0, M)$, 
\begin{enumerate}
\item If $h > H(c)$, then $S_{-}(c,h), D_{-}(c,h)$ are empty.
\item If $h < H(c)$, then $S_{+}(c,h), D_{+}(c,h)$ are empty.
\item If $h = H(c)$, then $S_{-}(c,h) =  D_{-}(c,h) = S_{+}(c,h) = D_{+}(c,h) = \{c\}$.
\end{enumerate}

\end{lemma}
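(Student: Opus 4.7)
The plan is to reduce everything to the single observation, already recorded in the remark after the definition of $H$, that $H$ is strictly order-preserving on $Y$: if $y,z\in Y$ with $z>y$, then $H(z)>H(y)$. Combined with the definitions of $c^{+}$ and $c^{-}$ from Table~\ref{tableofnotation}, this gives the key inequalities
\[
  y\in c^{+}\ \Longrightarrow\ H(y)\ge H(c),\qquad y\in c^{-}\ \Longrightarrow\ H(y)\le H(c),
\]
with equality in either case if and only if $y=c$. Indeed, $y\in c^{+}$ means $y\in Y$ and $y-c\in K$, so either $y=c$ (equality in $H$) or $y>c$ (strict inequality by the preamble); the argument for $c^{-}$ is symmetric.

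From these two inequalities each part of the lemma is immediate. For part~1, if $h>H(c)$ and $y\in c^{-}$, then $H(y)\le H(c)<h$, so $y$ satisfies neither $H(y)=h$ nor $H(y)\ge h$; hence $S_{-}(c,h)$ and $D_{-}(c,h)$ are both empty. Part~2 is the mirror-image statement, using $y\in c^{+}\Rightarrow H(y)\ge H(c)>h$.

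For part~3, suppose $h=H(c)$. The point $c$ itself lies in all four sets. Conversely, any $y\in c^{+}$ with $H(y)\le h=H(c)$ must satisfy $H(y)=H(c)$ by the first displayed inequality, hence $y=c$ by the equality case; this handles $D_{+}(c,h)$ and a fortiori $S_{+}(c,h)\subseteq D_{+}(c,h)$. The argument for $S_{-}(c,h)$ and $D_{-}(c,h)$ is the symmetric one using the second displayed inequality. There is no real obstacle here: everything follows from strict order-preservation of $H$, which is already in hand, so the proof should be no more than a few lines.
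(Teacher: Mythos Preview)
Your proposal is correct and follows essentially the same route as the paper: both arguments rest on the single fact that $H$ is strictly order-preserving along $K$, i.e.\ $y\in c^{\pm}\setminus\{c\}$ forces a strict inequality between $H(y)$ and $H(c)$, from which all three parts are immediate. The paper's proof is simply the one-line version of what you wrote out.
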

\begin{proof}
For any $x \in K\backslash\{0\}$, where defined, $H(c + x) > H(c)$ and $H(c-x) < H(c)$. All the statements follow immediately.
\end{proof}

\begin{figure}[h]
\begin{minipage}[h]{0.23\textwidth}
$a.$
\begin{center}
\vspace{0.5cm}
\includegraphics[width=0.9\textwidth]{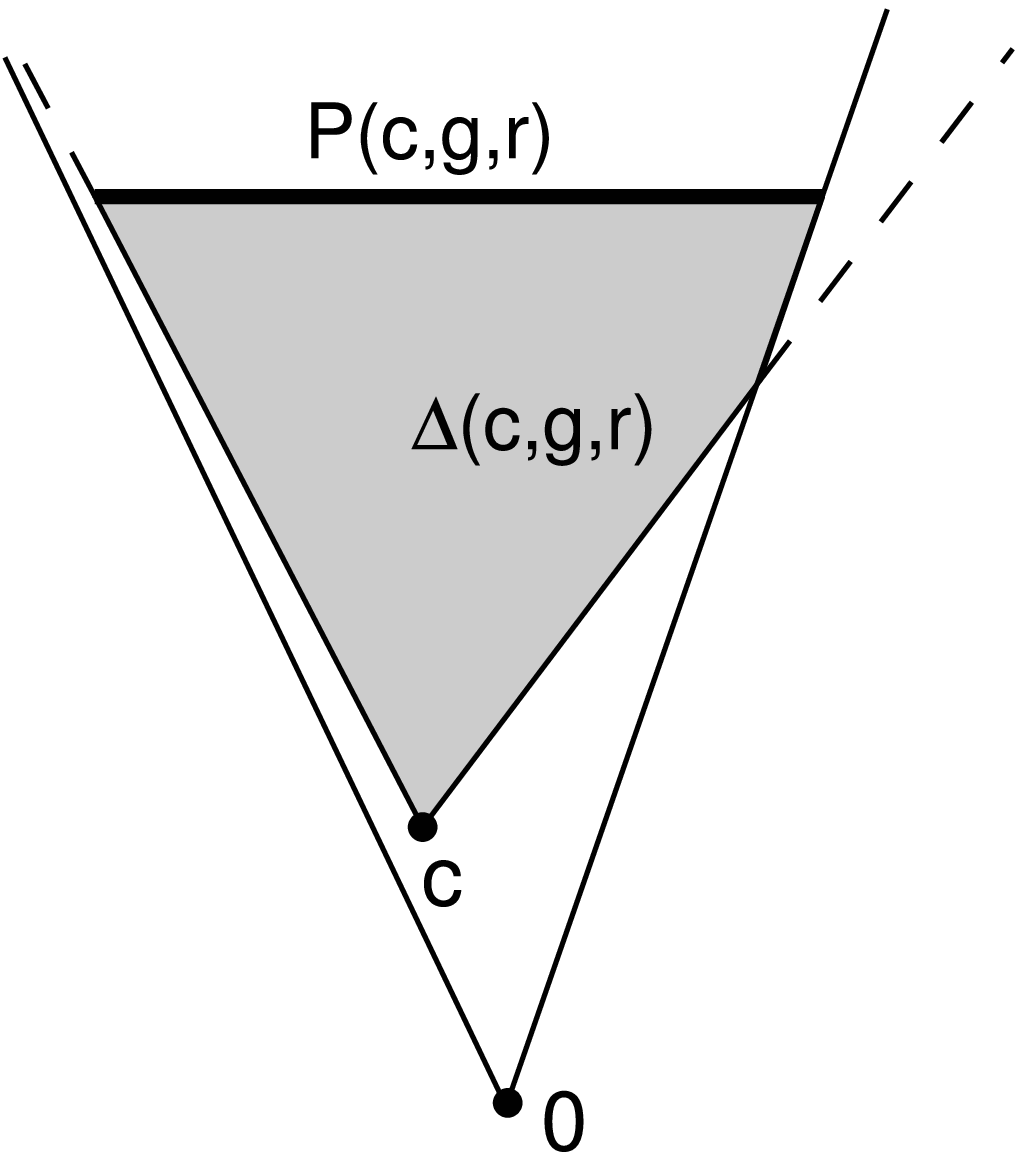}
\end{center}
\end{minipage}
\begin{minipage}[h]{0.25\textwidth}
$b.$
\begin{center}
\vspace{-0.5cm}
\includegraphics[width=0.9\textwidth]{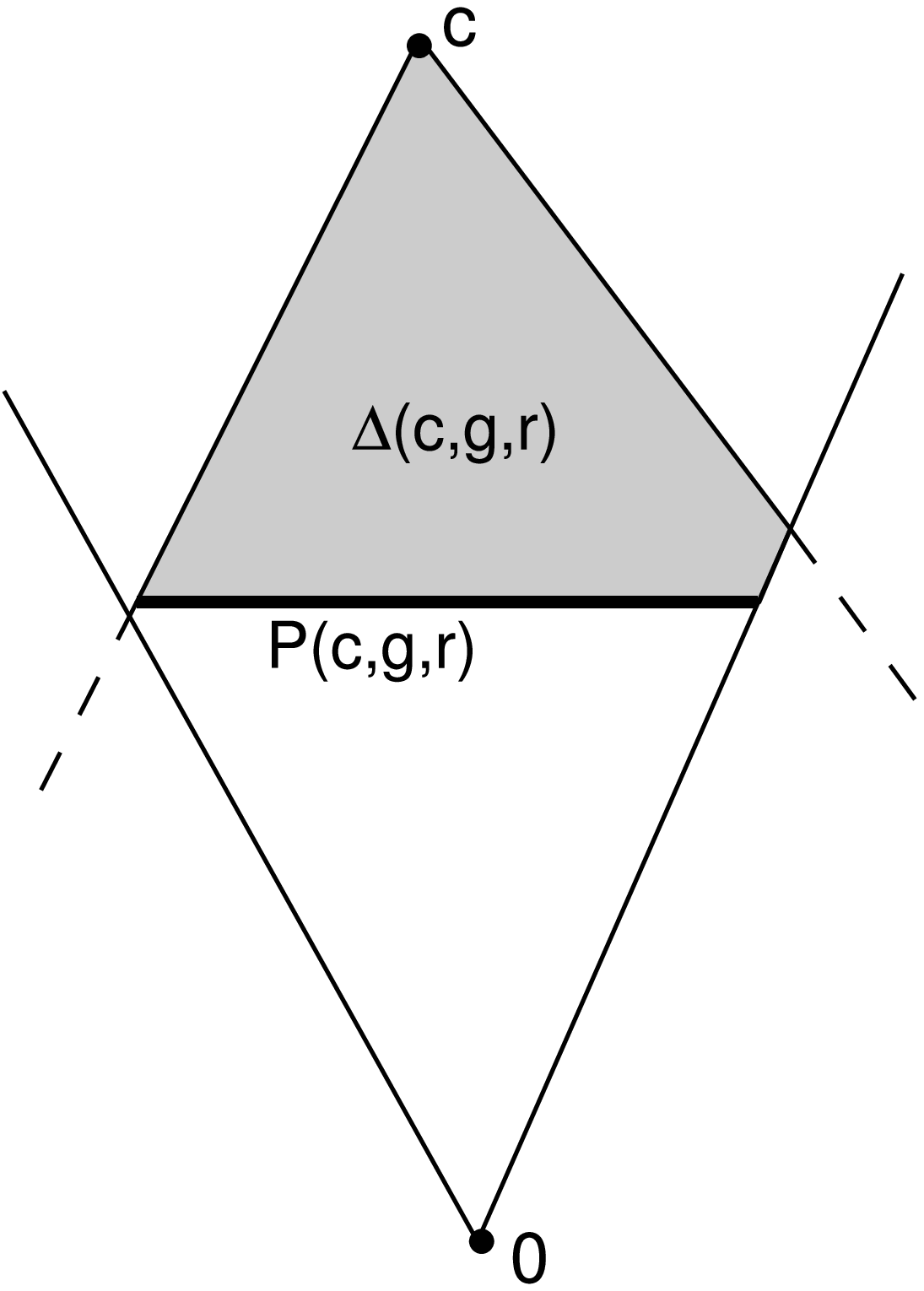}
\end{center}
\end{minipage}
\begin{minipage}[h]{0.23\textwidth}
$c.$
\begin{center}
\vspace{0.7cm}
\includegraphics[width=0.9\textwidth]{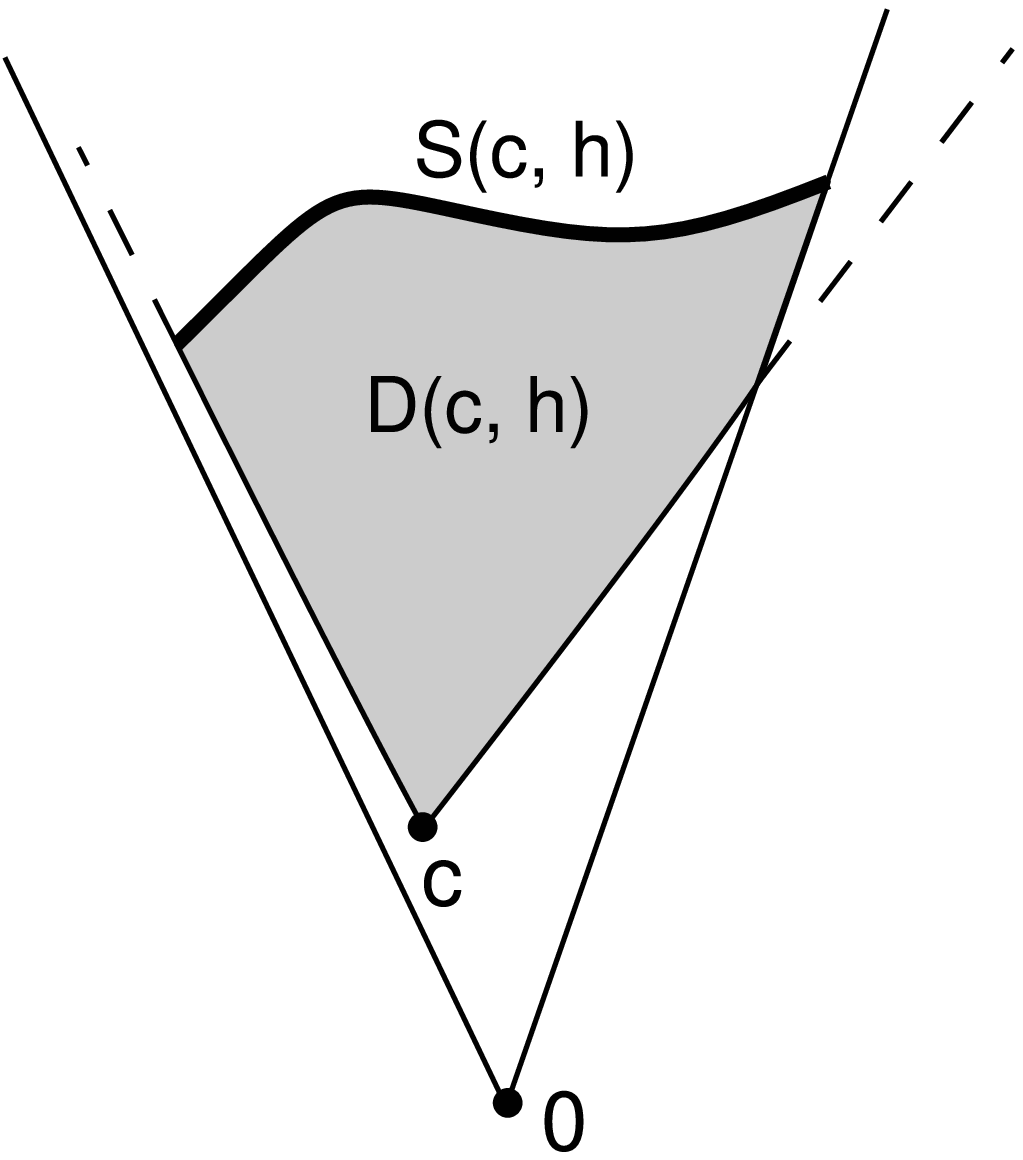}
\end{center}
\end{minipage}
\begin{minipage}[h]{0.25\textwidth}
$d.$
\begin{center}
\vspace{-0.3cm}
\includegraphics[width=0.9\textwidth]{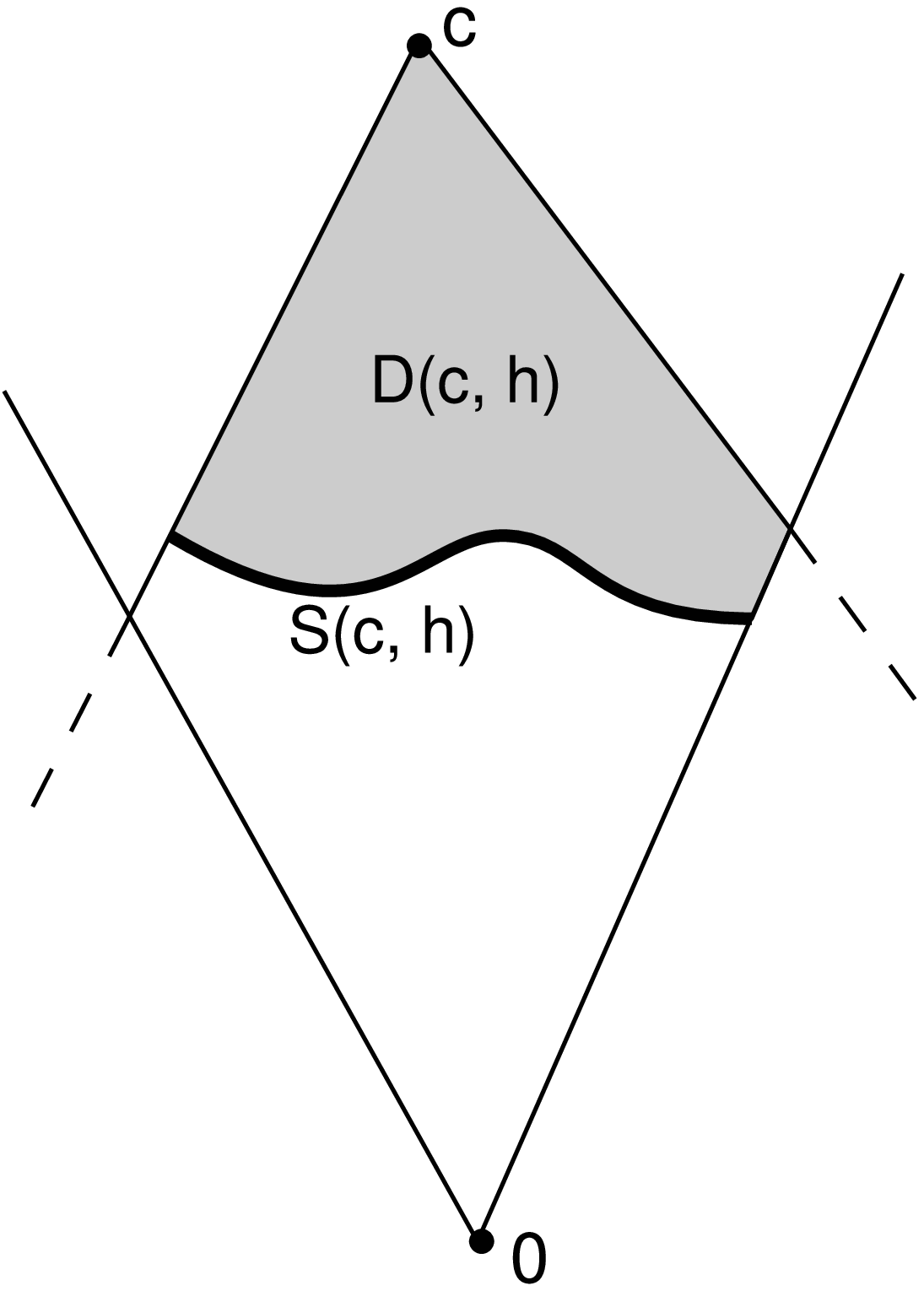}
\end{center}
\end{minipage}
\caption{\label{basic0}$a)$ and $b)$ A schematic represention of the sets $P(c, g, r)$ (bold line) and $\Delta(c, g, r)$ (shaded region). $a)$ $r > \langle g, c\rangle$, $b)$ $r < \langle g, c\rangle$. $c)$ and $d)$ A schematic represention of the sets $S(c, h)$ (bold line) and $D(c, h)$ (shaded region). $c)$ $h > H(c)$, $d)$ $h < H(c)$. }
\end{figure}

\begin{lemma}
\label{forwardinvar1}
For $c \in E$, $h \in [0, M)$, $S(c, h)$ is forward invariant.
\end{lemma}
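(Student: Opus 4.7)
The plan is to observe that $S(c,h)$ is, in every case, the intersection of two sets that are already known to be forward invariant, so forward invariance transfers to the intersection. Concretely, I would split into the three cases of Lemma~\ref{notate2}.

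First, in the trivial case $h = H(c)$, Lemma~\ref{notate2} gives $S(c,h) = \{c\}$, and since $c \in E$ this singleton is forward invariant.

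Next, for $h > H(c)$, the definition reads $S(c,h) = S_+(c,h) = c^+ \cap \{y \in Y : H(y) = h\}$. Lemma~\ref{forwardinvar} gives forward invariance of $c^+$, while the global level set $S(0,h) = \{y \in Y : H(y) = h\}$ is forward invariant because $\langle \nabla H(y), F(y) \rangle = 0$ (this was noted immediately before the statement of Theorem~\ref{mainthm}). The intersection of two forward invariant sets is forward invariant, so $S(c,h)$ is forward invariant. The case $h < H(c)$ is identical with $c^-$ replacing $c^+$.

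There is essentially no obstacle here; the only thing to be careful about is invoking the right objects in the definition (i.e., parsing the piecewise definition of $S(c,h)$ in Table~\ref{tableofnotation} and appealing to the appropriate half-space $c^+$ or $c^-$ according to the sign of $h - H(c)$). All the substantive content is already packaged in Lemma~\ref{forwardinvar} and in the first-integral property of $H$.
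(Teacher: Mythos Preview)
Your proof is correct and follows essentially the same approach as the paper: both argue that $S_{+}(c,h)$ and $S_{-}(c,h)$ are intersections of the forward invariant sets $c^{+}$ (resp.\ $c^{-}$) from Lemma~\ref{forwardinvar} with the forward invariant level set $S(0,h)$. The paper's version is just more terse and does not separate out the trivial case $h = H(c)$.
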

\begin{proof}
By Lemma~\ref{forwardinvar}, $c^{+}$ and $c^{-}$ are forward invariant, and so $S_{+}(c, h)$ and $S_{-}(c,h)$ are the intersection of forward invariant sets. 
\end{proof}\\

{\bf Remark:} $S(c,h)$ may be empty for $h > H(c)$. A key milestone will be to prove that given any $c \in E$, there is some $\epsilon_c > 0$ such that for $h \in [0, H(c) + \epsilon_c)$,  $S(c, h)$ contains an equilibrium. This will follow from Lemma~\ref{forwardinvar1} after we have shown that for $h \in [0, H(c) + \epsilon_c)$, $S(c, h)$ is a nonempty ball. \\

Define $\mathbb{S}^{n-1} = \{x \in \mathbb{R}^n\,|\,|x| = 1\}$, and 
\[\delta(g) \equiv \mathrm{inf}_{y \in (Y \cap \mathbb{S}^{n-1})}\langle g, y \rangle.\] 
For any nonzero $y \in Y$, $\langle g, y \rangle > 0$. Since $Y \cap \mathbb{S}^{n-1}$ is compact, $\delta(g) > 0$. Denoting the angle between two vectors $x$ and $y$ by $\theta_{x,y}$, note that $\delta(g) = \min_{y \in Y\backslash\{0\}}\cos(\theta_{g,y})$. 


\vspace{0.25cm}
\begin{lemma}
\label{cc1}
For $r \geq 0$, $P(0, g, r)$ and $\Delta(0, g, r)$ are nonempty, compact, convex sets. For $r > 0$, $P(0, g, r)$ is an $n-1$ dimensional ball. 
\end{lemma}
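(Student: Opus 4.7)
The plan is to dispatch the three properties — nonemptiness, convexity/closedness, compactness — separately, and then address the dimension claim. First I would unwind the definitions at $c = 0$: since $Y \subset K$, we have $0^{+} = Y \cap (0+K) = Y$, so $P(0,g,r) = Y \cap P(g,r)$ and $\Delta(0,g,r) = Y \cap \Delta_{-}(g,r)$. Convexity and closedness are then immediate, as both sets are intersections of closed convex sets (in the latter case, a closed convex cone intersected with a closed half-space).

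For nonemptiness, at $r=0$ both sets contain the origin. For $r>0$, I would pick any nonzero $z \in Y$; by the preceding remarks $\langle g, z\rangle > 0$, so $p := rz/\langle g, z\rangle \in Y$ (using that $Y$ is a cone) and $\langle g, p\rangle = r$, placing $p \in P(0,g,r) \subseteq \Delta(0,g,r)$.

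Compactness is where the hypothesis $\delta(g) > 0$ earns its keep. For any nonzero $y \in Y$, applying the definition of $\delta(g)$ to $y/|y| \in Y \cap \mathbb{S}^{n-1}$ gives $\langle g, y\rangle \geq \delta(g)\,|y|$. Hence every $y \in \Delta(0,g,r)$ satisfies $|y| \leq r/\delta(g)$, so $\Delta(0,g,r)$ is bounded, and $P(0,g,r) \subseteq \Delta(0,g,r)$ is bounded as well; combined with closedness, both are compact.

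For the dimension claim when $r>0$, I would exhibit an $(n-1)$-dimensional relatively open disk inside $P(0,g,r)$. Choose $y_0 \in \mathrm{int}(Y)$ and set $p := ry_0/\langle g, y_0\rangle$; because $\mathrm{int}(Y)$ is a cone under positive scaling, $p \in \mathrm{int}(Y) \cap P(g,r)$. Any Euclidean ball around $p$ contained in $Y$ meets the hyperplane $P(g,r)$ in an $(n-1)$-dimensional relatively open disk that sits inside $P(0,g,r)$, forcing $\dim\mathrm{Aff}(P(0,g,r)) \geq n-1$; the reverse inequality is clear since $P(0,g,r) \subset P(g,r)$. I don't anticipate any real obstacle here — the argument is routine convex geometry once $\delta(g)>0$ is in hand.
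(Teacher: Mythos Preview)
Your proposal is correct and follows essentially the same approach as the paper: convexity/closedness as an intersection of convex closed sets, nonemptiness by scaling a point of $Y$, boundedness via the constant $\delta(g)$, and the dimension claim by locating a point of $P(g,r)\cap\mathrm{int}(Y)$. The only cosmetic difference is that the paper phrases boundedness as a contradiction argument on a sequence $(y_j)$ and then deduces boundedness of $\Delta(0,g,r)$ from that of $P(0,g,r)$, whereas you obtain the explicit bound $|y|\leq r/\delta(g)$ on $\Delta(0,g,r)$ directly; this is the same idea, slightly more streamlined.
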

\begin{proof}
Convexity and closedness of $P(0, g, r)$ and $\Delta(0, g, r)$ are immediate as each is the intersection of closed, convex sets, and the fact that the sets are nonempty follows from Lemma~\ref{notate1}. Next we prove boundedness. Suppose there is some sequence of points $(y_j) \subset P(0, g, r)$, such that $|y_j| \to \infty$. As $\langle g, y_j\rangle = r  =  |y_j| \cos(\theta_{g,y_j}) $, we must have $\cos(\theta_{g,y_j}) \to 0$ contradicting the fact that $\cos(\theta_{g,y_j}) \geq \delta(g) > 0$. This proves that $P(0, g,r)$ is bounded. Since any point $x \in \Delta(0, g, r)$ can be written $x = ty$ where $y \in P(0, g,r)$ and $0 \leq t \leq 1$, it follows that $\Delta(0, g, r)$ is bounded. 

$P(0, g, r)$ is a subset of the $n-1$ dimensional hyperplane $P(g, r)$. For $r > 0$, given any $y^{'} \in \mathrm{int}(Y)$, $y \equiv ry^{'}/\langle g, y^{'}\rangle \in P(g, r) \cap \mathrm{int}(Y)$. Consider any such $y \in P(g, r) \cap \mathrm{int}(Y)$. Take any open neighbourhood $U \subset \mathrm{int}(Y)$ of $y$ and define $U^{'} = U \cap P(g, r)$. $U^{'} \subset P(0, g, r)$ is relatively open in $P(g, r)$. So the relative interior of $P(0, g, r)$ in $P(g, r)$ is precisely $P(g, r) \cap \mathrm{int}(Y)$. Since, for $r > 0$, $P(0, g, r)$ is compact and convex with nonempty relative interior in $P(g, r)$, it is an $n-1$ dimensional ball \cite{nikaido}. 
\end{proof}\\

\begin{definition}
The {\bf diameter} of a set $X$ is $\mathrm{diam}(X) \equiv \sup_{x,y \in X}|x-y|$.
\end{definition}

\vspace{0.25cm}

In the next two lemmas we characterise the structure of the sets $P(c, g, r)$ and $\Delta(c, g, r)$ for arbitrary $c \in Y$ and $r \geq 0$. 

\begin{lemma}
\label{smalldelta}
Consider some $c \in Y$, and some $r > \langle g, c\rangle$. Then $c^{+}$ is a nonempty, closed, convex set, and $P(c,g, r)$ and $\Delta(c,g, r)$ are nonempty, compact, convex sets. Given any $\epsilon > 0$, we can choose $r> \langle g, c\rangle$ such that $\mathrm{diam}(\Delta(c, g, r)) < \epsilon$. $c^{+}$ and $\Delta(c,g, r)$ are $n$-dimensional while $P(c,g, r)$ is $n-1$ dimensional. $\mathrm{ri}(P(c,g, r)) = P(c,g, r) \cap \mathrm{int}(c^{+})$. 
\end{lemma}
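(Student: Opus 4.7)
Since $r > \langle g, c\rangle$, Lemma~\ref{notate1} gives $P(c,g,r) = P(g,r) \cap c^+$ and $\Delta(c,g,r) = \Delta_{-}(g,r) \cap c^+$; the latter contains $c$, while nonemptiness of $P(c,g,r)$ will follow from the dimension argument below. Closedness and convexity of $c^+$, $P(c,g,r)$, and $\Delta(c,g,r)$ are immediate as intersections of closed convex sets. To see that $c^+$ has nonempty interior, pick any $p \in \mathrm{int}(Y) \subset \mathrm{int}(K)$; then for every $t > 0$ the point $c + tp$ lies in $\mathrm{int}(Y) \cap (c + \mathrm{int}(K))$ by applying property O1 to $Y$ and to $K$, hence $c + tp \in \mathrm{int}(c^+)$ and $c^+$ is $n$-dimensional.

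For compactness and the diameter estimate, I set $\delta_K \equiv \inf_{k \in K \cap \mathbb{S}^{n-1}} \langle g, k\rangle$; since $g \in \mathrm{int}(K^*)$ and $K \cap \mathbb{S}^{n-1}$ is compact, $\delta_K > 0$ by the same argument already used for $\delta(g)$. For $y \in \Delta(c,g,r)$, we have $y - c \in K$ and $\langle g, y - c\rangle \leq r - \langle g, c\rangle$, whence $|y - c| \leq (r - \langle g, c\rangle)/\delta_K$. This bounds $\Delta(c,g,r)$ and so also its closed subset $P(c,g,r)$, proving compactness; it further bounds $\mathrm{diam}(\Delta(c,g,r))$ by $2(r - \langle g, c\rangle)/\delta_K$, which drops below any prescribed $\epsilon > 0$ once $r$ is chosen sufficiently close to $\langle g, c\rangle$ from above.

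For dimension, the point $c + tp$ from the first paragraph lies in $\mathrm{int}(c^+)$ and additionally satisfies $\langle g, c + tp\rangle < r$ for $t > 0$ small enough, so it lies in $\mathrm{int}(\Delta(c,g,r))$; hence $\Delta(c,g,r)$ is $n$-dimensional. The set $P(c,g,r)$ sits inside the hyperplane $P(g,r)$, so has dimension at most $n-1$. The remaining claim $\mathrm{ri}(P(c,g,r)) = P(c,g,r) \cap \mathrm{int}(c^+)$, together with $\dim P(c,g,r) = n-1$, follows from the standard convex-analysis fact that if $H$ is a hyperplane meeting $\mathrm{int}(C)$ for a convex set $C \subset \mathbb{R}^n$, then $\mathrm{ri}(C \cap H) = \mathrm{int}(C) \cap H$. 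The $\supset$ inclusion is clear by intersecting an open $\mathbb{R}^n$-ball at an interior point with $H$; for $\subset$, given $y \in \mathrm{ri}(C \cap H)$ and $y_0 \in \mathrm{int}(C) \cap H$, one extends the segment from $y_0$ through $y$ slightly past $y$ inside $C \cap H$ to obtain $y = \lambda y_0 + (1-\lambda) y_1$ with $\lambda \in (0,1)$ and $y_1 \in C$, and concludes via the classical fact that a proper convex combination of an interior point with any point of $C$ is interior. Applied with $C = c^+$ and $H = P(g,r)$, which meet at $c + t_0 p$ for $t_0 = (r - \langle g, c\rangle)/\langle g, p\rangle > 0$, this delivers the identity. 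The one step with any real content is this last convex-geometry identification; the rest is bookkeeping with properties of the cones already assembled.
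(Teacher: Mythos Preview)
Your proof is correct and follows essentially the same line as the paper's: closedness/convexity by intersection, boundedness via the angle bound $\langle g,\,\cdot\,\rangle \geq \delta\,|\cdot|$ on the cone, interior points of the form $c+tp$ with $p\in\mathrm{int}(Y)$, and the identification of $\mathrm{ri}(P(c,g,r))$ with $P(g,r)\cap\mathrm{int}(c^+)$. Two minor differences are worth noting: you use $\delta_K=\inf_{k\in K\cap\mathbb{S}^{n-1}}\langle g,k\rangle$ rather than the paper's $\delta(g)$ taken over $Y\cap\mathbb{S}^{n-1}$, which is the sharper choice since $y-c$ lies in $K$ but not necessarily in $Y$; and you package the relative-interior identity as a clean general statement about a hyperplane meeting $\mathrm{int}(C)$, with a self-contained line-extension argument, whereas the paper refers back to the computation in Lemma~\ref{cc1}. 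Both are cosmetic improvements rather than a different route.
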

\begin{proof}
Convexity and closedness of $c^{+}$, $P(c,g, r)$ and $\Delta(c,g, r)$ are immediate as they are the intersection of closed, convex sets. From Lemma~\ref{cc1}, $\Delta(0, g, r)$ is bounded, and so $P(c, g, r),\Delta(c,g, r) \subset \Delta(0, g, r)$ are bounded. Given any $y \in Y$, defining $y_2 = y(r - \langle g, c\rangle)/\langle g, y\rangle$, it is easy to check that $c+y_2 \in P(c,g,r)$, so $P(c,g, r)$, and hence $\Delta(c,g, r)$ and $c^{+}$, are nonempty. 

Fix $\epsilon > 0$ and with $\delta(g)$ defined as previously, choose $r \in (\langle g, c \rangle, \langle g, c \rangle + \epsilon \delta(g)/2)$. Consider any $y \in \Delta(c,g, r)$. When $y = c$, $|y-c| = 0 < \epsilon/2$. For $y \not = c$, rearranging $\langle g, y-c \rangle = |y-c|\cos(\theta_{g, y-c})$ gives
\[
|y-c| = \frac{\langle g, y-c \rangle}{\cos(\theta_{g, y-c})} \leq \frac{r - \langle g, c\rangle}{\delta(g)} < \epsilon/2.
\]
By compactness of $\Delta(c, g, r)$, $\sup_{y \in \Delta(c, g, r)}|y-c| < \epsilon/2$, and by the triangle inequality, $\mathrm{diam}(\Delta(c, g, r)) < \epsilon$.

Given any $y \in \mathrm{int}(Y)$, define $y_1 = ty$ for some $0 < t < \frac{r - \langle g, c\rangle}{\langle g, y\rangle}$, and $y_2 = y(r - \langle g, c\rangle)/\langle g, y\rangle$. Then, applying {\bf O2}, $c+y \in \mathrm{int}(c^{+})$, so $c^{+}$ has nonempty interior in $\mathbb{R}^n$ (i.e. it is $n$ dimensional). Any point in $\mathrm{int}(c^{+}) \cap \mathrm{int}(\Delta_{-}(g, r))$, including for example $c+y_1$, has an open neighbourhood in $\Delta(c,g, r)$, and so $\Delta(c,g, r)$ is $n$ dimensional. By arguments similar to those in Lemma~\ref{cc1}, the relative interior of $P(c, g, r)$ in $P(g, r)$ is $P(c,g, r) \cap \mathrm{int}(c^{+})$, which contains the point $c + y_2$, and so is nonempty. So $P(c, g, r)$ is $n-1$ dimensional, $\mathrm{Aff}(P(c, g, r)) = P(g, r)$, and hence $\mathrm{ri}(P(c,g, r)) = P(c,g, r) \cap \mathrm{int}(c^{+})$. 
\end{proof}\\

Note that $\partial c^{+} = (c^{+} \cap (c+\partial K)) \cup (c^{+} \cap \partial Y)$. The analogous lemma for $r < \langle g, c \rangle$ is slightly altered by the fact that $c^{-}$ may have empty interior in $\mathbb{R}^n$:
\begin{lemma}
\label{smalldelta1}
Consider some $c \in Y\backslash\{0\}$, and some $r \in [0,  \langle g, c\rangle)$. Then $c^{-}$, $P(c,g, r)$ and $\Delta(c,g, r)$ are nonempty, compact, convex sets. Given any $\epsilon > 0$, we can choose $r \in [0, \langle g, c\rangle)$ such that $\mathrm{diam}(\Delta(c, g, r)) < \epsilon$. $\Delta(c,g, r)$ has nonempty relative interior in $\mathrm{Aff}(c^{-})$. When $r > 0$, $\mathrm{ri}(P(c,g, r)) = P(g, r) \cap \mathrm{ri}(c^{-})$. 
\end{lemma}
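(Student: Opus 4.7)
The plan is to follow the template of Lemma~\ref{smalldelta}, with the crucial difference that $c^{-}$ may have empty interior in $\mathbb{R}^n$, so all dimensional and topological statements must be phrased relative to $\mathrm{Aff}(c^{-})$. Convexity and closedness of $c^{-}$, $P(c,g,r)$, and $\Delta(c,g,r)$ are immediate as intersections of closed, convex sets. For nonemptiness, the segment $\{tc : t \in [0, 1]\}$ lies in $c^{-}$ (since $tc \in Y$ and $c - tc = (1-t)c \in K$), and setting $t = r/\langle g, c\rangle$ yields a point in $P(c, g, r)$. For boundedness of $c^{-}$: any $y \in c^{-}$ lies in both $Y \subset K$ and $c-K$, so $0 \leq \langle g, y\rangle \leq \langle g, c\rangle$, whence $c^{-} \subset \Delta(0, g, \langle g, c\rangle)$, which is bounded by Lemma~\ref{cc1}.

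For the diameter estimate, one mimics the calculation in Lemma~\ref{smalldelta}: for $y \in \Delta(c, g, r)$, $c - y \in K$ and $\langle g, c-y\rangle \leq \langle g, c\rangle - r$. Since $g \in \mathrm{int}(K^*)$, compactness of $K \cap \mathbb{S}^{n-1}$ gives $\inf_{k \in K \cap \mathbb{S}^{n-1}} \langle g, k\rangle > 0$, so the same cosine computation bounds $|c-y|$ above by a multiple of $\langle g, c\rangle - r$, which can be made arbitrarily small by choosing $r$ close enough to $\langle g, c\rangle$ from below; the diameter bound then follows from the triangle inequality.

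For the nonempty relative interior of $\Delta(c, g, r)$ in $\mathrm{Aff}(c^{-})$: since $c^{-}$ is nonempty convex, $\mathrm{ri}(c^{-}) \neq \emptyset$. Fix any $y_1 \in \mathrm{ri}(c^{-})$; the standard half-open-segment property of relative interior gives $y_0 \equiv (1-\lambda) y_1 + \lambda c \in \mathrm{ri}(c^{-})$ for $\lambda \in [0, 1)$, and $\langle g, y_0\rangle \to \langle g, c\rangle > r$ as $\lambda \to 1$, so we may fix $\lambda$ so that $\langle g, y_0\rangle > r$. A sufficiently small neighbourhood of $y_0$ in $\mathrm{Aff}(c^{-})$ then lies in $\mathrm{ri}(c^{-}) \cap \Delta_{+}(g, r) \subset \Delta(c, g, r)$ by continuity, establishing $y_0 \in \mathrm{ri}(\Delta(c, g, r))$. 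For the final claim, that $\mathrm{ri}(P(c,g,r)) = P(g, r) \cap \mathrm{ri}(c^{-})$ when $r > 0$, I would invoke the standard identity $\mathrm{ri}(A \cap B) = \mathrm{ri}(A) \cap \mathrm{ri}(B)$ for convex $A, B$ with $\mathrm{ri}(A) \cap \mathrm{ri}(B) \neq \emptyset$, taking $A = P(g, r)$ (an affine subspace, equal to its own relative interior) and $B = c^{-}$. The required nonemptiness for $r \in (0, \langle g, c\rangle)$ follows from the observation that $\langle g, \cdot\rangle$ attains every value in $(0, \langle g, c\rangle)$ on $\mathrm{ri}(c^{-})$: using $0, c \in c^{-}$ and $y_1 \in \mathrm{ri}(c^{-})$, both $sy_1$ (for $s \in (0, 1]$) and $(1-\lambda) y_1 + \lambda c$ (for $\lambda \in [0, 1)$) lie in $\mathrm{ri}(c^{-})$, and an intermediate-value argument along these segments attains $r$. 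The main care needed throughout is to track that all interior statements are with respect to $\mathrm{Aff}(c^{-})$, which may be a proper subspace of $\mathbb{R}^n$.
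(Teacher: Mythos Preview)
Your proof is correct and follows essentially the same route as the paper's own argument: closedness and convexity as intersections, nonemptiness via the segment $[0,c]$, boundedness via $c^{-}\subset\Delta(0,g,\langle g,c\rangle)$, the cosine/diameter estimate, and locating points of $\mathrm{ri}(c^{-})$ with prescribed $\langle g,\cdot\rangle$-value. Two cosmetic differences: you take the infimum of $\langle g,\cdot\rangle$ over $K\cap\mathbb{S}^{n-1}$ rather than the paper's $\delta(g)$ over $Y\cap\mathbb{S}^{n-1}$ (yours is the natural choice here since $c-y\in K$), and for the last claim you invoke the standard convex-analysis identity $\mathrm{ri}(A\cap B)=\mathrm{ri}(A)\cap\mathrm{ri}(B)$ in place of the paper's explicit neighbourhood construction; both are harmless streamlinings of the same argument.
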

\begin{proof}
$c^{-}$ is closed and convex by construction. As $[0, c] \subset c^{-}$, it is nonempty (and at least 1 dimensional). Since $c^{-} \subset \Delta(0, g, \langle g, c\rangle)$, by Lemma~\ref{cc1}, it is bounded. $P(c,g, r)$ and $\Delta(c,g, r)$ are convex and closed as the intersection of such sets, and are bounded as subsets of $c^{-}$. Since $rc/\langle g, c \rangle \in P(c,g, r)$, so $P(c,g, r)$, and hence $\Delta(c,g,r)$, are nonempty. With $\delta(g)$ defined as previously, choose any $r > 0$ satisfying $r \in (\langle g, c \rangle - \epsilon \delta(g)/2, \langle g, c \rangle)$. Consider any vector $y \in \Delta(c,g, r)$. When $y = c$, $|y-c| = 0 < \epsilon$. For $y \not = c$, rearranging $\langle g, c-y \rangle = |c-y|\cos(\theta_{g, c-y})$ gives
\[
|c-y| = \frac{\langle g, c-y \rangle}{\cos(\theta_{g, c-y})} \leq \frac{\langle g, c\rangle - r}{\delta(g)} < \epsilon/2\,.
\]
By compactness of $\Delta(c, g, r)$, $\max_{y \in \Delta(c, g, r)}|c-y| < \epsilon/2$, and by the triangle inequality $\mathrm{diam}(\Delta(c, g, r)) < \epsilon$.

As $c^{-}$ is convex and contains both $c$ and $0$, and $\langle g, \cdot \rangle$ is continuous, $\langle g, \cdot \rangle$ takes all values in $(0, \langle g, c \rangle)$ in $\mathrm{ri}(c^{-})$. 
Consider any $y_1 \in \mathrm{ri}(c^{-})$ such that $\langle g, y_1 \rangle \in (r, \langle g, c \rangle)$. Take an open neighbourhood $U\subset \mathrm{int}(\Delta_{+}(g, r))$ of $y_1$, such that $U^{'} = U \cap \mathrm{Aff}(c^{-}) \subset c^{-}$. 
Then $U^{'} \subset \Delta(c,g, r)$, showing that $\Delta(c,g, r)$ has nonempty relative interior in $\mathrm{Aff}(c^{-})$. Thus $\mathrm{Aff}(c^{-}) = \mathrm{Aff}(\Delta(c,g, r))$, and $\mathrm{ri}(\Delta(c,g, r)) = \mathrm{ri}(c^{-}) \cap \mathrm{int}(\Delta_{+}(g, r))$. 

Fix $r > 0$ and choose any $y_2 \in \mathrm{ri}(c^{-})$ such that $\langle g, y_2 \rangle  = r$. Take any open neighbourhood $U$ of $y_2$ such that $U \cap \mathrm{Aff}(c^{-}) \subset c^{-}$. 
Then $U^{'} = U \cap \mathrm{Aff}(c^{-}) \cap P(g, r) \subset P(c, g, r)$. Thus $y_2 \in \mathrm{ri}(P(c, g, r))$, and in fact $\mathrm{ri}(P(c,g, r)) = P(g, r) \cap \mathrm{ri}(c^{-})$. 
\end{proof}\\

{\bf Remarks.} The fact that $c^{-}$ may have empty interior in $\mathbb{R}^n$ necessitates some care in the arguments. However, once attention is restricted to $\mathrm{Aff}(c^{-})$, the fundamental geometrical notions are similar to the case of $c^{+}$: define $Y_c \equiv Y \cap \mathrm{Aff}(c^{-})$ and $K_c \equiv K \cap \mathrm{Aff}(c^{-})$. Note that $c^{-} = (c-K_c) \cap Y_c$, and since $Y_c \cap c^{-} = c^{-} = K_c \cap c^{-}$, so both $Y_c, K_c \supset c^{-}$, and thus both have nonempty relative interior in $\mathrm{Aff}(c^{-})$. Further, $\mathrm{relbd}(c^{-})$ is the union of $c^{-} \cap (c - \mathrm{relbd}(K_c))$ and $c^{-} \cap \mathrm{relbd}(Y_c)$.
The fact that for $r \in (0, \langle g, c \rangle)$, $\mathrm{ri}(P(c,g, r)) = P(c,g, r) \cap \mathrm{ri}(c^{-})$ and for $r > \langle g, c \rangle$, $\mathrm{ri}(P(c,g, r)) = P(c,g, r) \cap \mathrm{int}(c^{+})$, motivates the definition:
\[
T(c,g,r) \equiv \left\{ \begin{array}{ll}g^\perp \cap \mathrm{Aff}(c^{-}), & r < \langle g, c \rangle\\
g^\perp, & r > \langle g, c \rangle\,.\end{array} \right .
\]
Geometrically, $T(c,g,r)$ is the tangent space to $P(c, g, r)$ provided $r \not  \in \{0, \langle g, c\rangle\}$, that is, given $c \not = 0$, $r \in (0, \langle g, c \rangle) \cup (\langle g, c \rangle, \infty)$, $x \in P(c, g, r)$, and some $\delta \in \mathbb{R}^n$, then $x + \delta \in \mathrm{Aff}(P(c, g, r))$ iff $\delta \in T(c,g,r)$. If $x \in \mathrm{ri}(P(c, g, r))$, then there exists $t > 0$ such that $x + t\delta \in P(c, g, r)$ iff $\delta \in g^\perp \cap \mathrm{Aff}(c^{-})$. \\

We now characterise $S(c, h)$ and $D(c, h)$.
\begin{lemma}
\label{boundedD}
Given any $c \in Y$, and any $\epsilon > 0$, there is some $h^{'} > H(c)$ such that for all $h \in (H(c), h^{'})$, $S(c,h)$ and $D(c,h)$ are nonempty and compact with $\mathrm{diam}(D(c,h)) < \epsilon$.  
\end{lemma}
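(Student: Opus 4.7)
The plan is to sandwich $D(c,h)$ inside the set $\Delta(c,g,r)$ of Lemma~\ref{smalldelta} for an appropriately chosen $r > \langle g, c\rangle$, so that the diameter bound established there transfers automatically. Since $h > H(c)$, Table~\ref{tableofnotation} identifies $S(c,h) = S_{+}(c,h)$ and $D(c,h) = D_{+}(c,h)$, so the argument takes place entirely in $c^{+}$. Fixing $\epsilon > 0$, I would first invoke Lemma~\ref{smalldelta} to choose $r > \langle g, c\rangle$ with $\mathrm{diam}(\Delta(c,g,r)) < \epsilon$.

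The main step---and what I expect to be the chief obstacle---is producing a threshold $h_0 > H(c)$ such that $D_{+}(c,h) \subset \Delta(c,g,r)$ whenever $h \in (H(c), h_0)$. My plan is to set $h_0 \equiv \min_{y \in P(c,g,r)} H(y)$; this minimum is attained by compactness of $P(c,g,r)$ (Lemma~\ref{smalldelta}) and continuity of $H$, and it exceeds $H(c)$ because every $y \in P(c,g,r)$ has $y - c \in K$ and $y \neq c$ (since $\langle g, y\rangle = r > \langle g, c\rangle$), so $y > c$ and the integration-along-a-segment remark from Section~2 gives $H(y) > H(c)$. To prove the inclusion I would argue by contrapositive: for any $y \in c^{+}$ with $\langle g, y\rangle > r$, convexity of $c^{+}$ places $[c,y]$ in $c^{+}$, and the continuous function $t \mapsto \langle g, c + t(y-c)\rangle$ passes through $r$ at some $t^{*} \in (0,1)$. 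The resulting point $y' \equiv c + t^{*}(y-c)$ then lies in $P(c,g,r)$ and satisfies $y - y' = (1-t^{*})(y-c) \in K\setminus\{0\}$, so monotonicity of $H$ yields $H(y) > H(y') \geq h_0$. Contrapositively, $H(y) \leq h < h_0$ forces $\langle g, y\rangle \leq r$, i.e.\ $y \in \Delta(c,g,r)$.

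The remaining properties should follow quickly. $D(c,h)$ is closed as the intersection $c^{+} \cap H^{-1}((-\infty,h])$ of closed sets and is bounded as a subset of $\Delta(c,g,r)$, hence compact, and its diameter is at most $\mathrm{diam}(\Delta(c,g,r)) < \epsilon$; and $S(c,h) \subset D(c,h)$ inherits compactness as a closed subset. For nonemptiness, I would apply Lemma~\ref{upperboundlem} to the singleton $\{c\}$ to obtain some $z \in Y$ with $z > c$, so that $z \in c^{+}$ and $H(z) > H(c)$; convexity and hence connectedness of $c^{+}$, combined with continuity of $H$, force $[H(c), H(z)] \subset H(c^{+})$, so $S_{+}(c,h) \neq \emptyset$ for every $h \in [H(c), H(z)]$. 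Setting $h' \equiv \min(h_0, H(z))$ then produces the threshold claimed by the lemma.
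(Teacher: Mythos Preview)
Your argument is correct and follows essentially the same route as the paper: choose $r$ via Lemma~\ref{smalldelta}, set the threshold to $\min_{P(c,g,r)} H$, and use the segment $[c,y]$ crossing $P(c,g,r)$ to force $D(c,h)\subset\Delta(c,g,r)$. The only difference is that for nonemptiness the paper applies the intermediate value theorem directly along $[c,x]$ for any $x\in P(c,g,r)$ (where $H(x)\geq h_0>h$), which avoids the detour through Lemma~\ref{upperboundlem} and lets $h'=h_0$ serve as the single threshold without taking a minimum.
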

\begin{proof}
$S(c,h)$, $D(c,h)$ are closed by construction, and $S(c,h)$ is bounded provided $D(c,h)$ is bounded. By  Lemma~\ref{smalldelta}, choose $r > \langle g, c \rangle$ such that $\mathrm{diam}(\Delta(c, g, r)) < \epsilon$. For $x \in P(c, g, r)$, $x > c$, and hence $H(x) > H(c)$. By continuity of $H$ and compactness of $P(c, g, r)$, we get that $h^{'} \equiv \inf\{H(x)\,|\,x \in P(c, g, r)\} > H(c)$. Choosing $h \in (H(c), h^{'})$ and any $x \in P(c, g, r)$, and applying the intermediate value theorem along the line segment $[c, x]$, we see that there exists $x_h \in [c, x]$ such that $H(x_h) = h$, and so $S(c, h)$ is nonempty. Suppose there exists $x \in S(c, h) \backslash \Delta(c,g,r)$. Since $[c, x] \subset c^{+}$, in order to exit from $\Delta(c,g,r)$, $[c, x]$ must intersect $P(c,g,r)$ at some point $x^{'} < x$. Since $h^{'} > h$ we must have $h = H(x) < h^{'} \leq H(x^{'})$, contradicting the fact that $x^{'} < x$ implies $H(x^{'}) < H(x)$. So $S(c, h) \subset \Delta(c,g,r)$. The same argument applies for any $\tilde h  \in (H(c), h]$, and since $D(c, h) = \cup_{\tilde h \leq h}S(c, h)$,  $D(c, h) \subset \Delta(c,g,r)$ and $\mathrm{diam}(D(c,h)) < \epsilon$. 
\end{proof}\\

\begin{lemma}
\label{boundedD1}
Given any $c\in Y$, for all $h \in [0, H(c))$, $S(c,h)$ and $D(c,h)$ are nonempty and compact. If $c \not = 0$, given any $\epsilon > 0$, we can choose $h^{'} \in [0, H(c))$ such that for all $h \in (h^{'}, H(c))$, $\mathrm{diam}(D(c,h)) < \epsilon$. 
\end{lemma}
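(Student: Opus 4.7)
The plan is to mirror the structure of Lemma~\ref{boundedD}, but working with $c^{-}$ in place of $c^{+}$ and using Lemma~\ref{smalldelta1} in place of Lemma~\ref{smalldelta}. The case $c = 0$ is vacuous (since then $H(c) = 0$ and $[0, H(c)) = \emptyset$), so I implicitly assume $c \neq 0$ throughout.

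First I would address nonemptiness and compactness. Since $0, c \in c^{-}$ and $c^{-}$ is convex, the segment $[0,c]$ lies in $c^{-}$. Since $H$ is continuous on $[0,c]$ with $H(0) = 0$ and $H(c) > h$, by the intermediate value theorem there is a point $y \in [0,c]$ with $H(y) = h$, so $S(c,h) = S_{-}(c,h)$ is nonempty, and hence $D(c,h) \supset S(c,h)$ is nonempty. Both sets are closed as the intersection of the closed set $c^{-}$ with preimages of closed sets under the continuous map $H$. For boundedness, note that $c^{-} \subset \Delta(0, g, \langle g,c\rangle)$, which is bounded by Lemma~\ref{cc1}; hence $S(c,h), D(c,h) \subset c^{-}$ are bounded, and therefore compact.

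For the diameter claim, fix $\epsilon > 0$. By Lemma~\ref{smalldelta1}, choose $r \in [0, \langle g, c \rangle)$ with $\mathrm{diam}(\Delta(c,g,r)) < \epsilon$. For any $x \in P(c,g,r)$ we have $x \in c - K$ and $x \neq c$ (since $\langle g, x\rangle = r < \langle g, c\rangle$), so $x < c$ and hence $H(x) < H(c)$. By continuity of $H$ and compactness of $P(c,g,r)$, the value $h' \equiv \sup\{H(x) : x \in P(c,g,r)\}$ satisfies $h' < H(c)$. I claim that for any $h \in (h', H(c))$, $D(c,h) \subset \Delta(c,g,r)$, which immediately yields $\mathrm{diam}(D(c,h)) \leq \mathrm{diam}(\Delta(c,g,r)) < \epsilon$.

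To prove the claim, suppose for contradiction that some $x \in D(c,h)$ lies outside $\Delta(c,g,r)$, i.e.\ $\langle g, x\rangle < r$. Since $x \in c^{-}$ and $c^{-}$ is convex, the segment $[c,x] \subset c^{-}$; writing $x = c - k$ with $k \in K$, points of $[c,x]$ have the form $c - \lambda k$ ($\lambda \in [0,1]$), which decrease in $K$-order as $\lambda$ increases. Since $\langle g, \cdot\rangle$ varies continuously along $[c,x]$ from $\langle g, c\rangle$ down to $\langle g, x\rangle < r$, there exists $x' \in [c,x] \cap P(g,r)$, and $x' \in c^{-}$, so $x' \in P(c,g,r)$. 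Since $x' \neq x$ (their $g$-values differ) and $x'$ lies strictly before $x$ on $[c,x]$, we have $x' > x$, hence $H(x') > H(x) \geq h > h' \geq H(x')$, a contradiction. The main subtlety here is the line-segment ordering argument — straightforward once $c^{-}$ is written as $c - K \cap Y$ and one tracks how $\langle g, \cdot\rangle$ and the $K$-order co-vary along $[c, x]$.
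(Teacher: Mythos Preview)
Your proof is correct and follows essentially the same approach as the paper: intermediate value theorem along $[0,c]$ for nonemptiness, compactness of $c^{-}$ for boundedness, then Lemma~\ref{smalldelta1} to choose $r$, define $h'$ as the supremum of $H$ over $P(c,g,r)$, and derive a contradiction from the segment $[c,x]$ crossing $P(c,g,r)$. The only minor difference is that you argue the inclusion directly for $D(c,h)$, whereas the paper first treats $S(c,h)$ and then passes to $D(c,h)$ as a union; your version is slightly cleaner.
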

\begin{proof}
$S(c,h)$ and $D(c,h)$ are closed by construction. By Lemma~\ref{smalldelta1}, $c^{-}$ is compact, and so $S(c,h)$ and $D(c,h)$ are bounded. Applying the intermediate value theorem along the line segment $[0, c]$, we see that for any $h \in [0, H(c)]$, there exists $x_h \in [0, c]$ such that $H(x_h) = h$, and so $S(c, h)$ is nonempty. By  Lemma~\ref{smalldelta1}, choose $r \in [0, \langle g, c \rangle)$ such that $\mathrm{diam}(\Delta(c, g, r)) < \epsilon$. Any point $x \in P(c, g, r)$ satisfies $x < c$, and hence $H(x) < H(c)$. By continuity of $H$ and compactness of $P(c, g, r)$, we get that $h^{'} \equiv \sup\{H(x)\,|\,x \in P(c, g, r)\} < H(c)$. Choose $h \in (h^{'}, H(c))$. Suppose there exists $x \in S(c, h) \backslash \Delta(c,g,r)$. Since $[c,x] \subset c^{-}$, in order to exit from $\Delta(c,g,r)$, $[c,x]$ must intersect $P(c,g,r)$ at some point $x^{'} > x$. Since $h^{'} < h$ we must have $h = H(x) > h^{'} \geq H(x^{'})$, contradicting the fact that $x^{'} > x$ implies $H(x^{'}) > H(x)$. So $S(c, h) \subset \Delta(c,g,r)$. The same argument applies for any $\tilde h  \in [h, H(c))$, and since $D(c, h) = \cup_{\tilde h \leq h}S(c, h)$,  $D(c, h) \subset \Delta(c,g,r)$ and $\mathrm{diam}(D(c,h)) < \epsilon$. 
\end{proof}\\

\begin{lemma}
\label{boundedP}
Given any $c \in Y$, $h > H(c)$, there is some $r > \langle g, c\rangle$ such that $\Delta(c, g,r)$ lies in $D(c,h)$, and $\max_{x \in \Delta(c, g, r)}H(x) < h$.
\end{lemma}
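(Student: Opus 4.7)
The plan is to combine the continuity of $H$ at $c$ with the diameter-control provided by Lemma~\ref{smalldelta}. Since $H(c) < h$, there is some room between $H(c)$ and $h$, and since $c$ itself lies in every $\Delta(c, g, r)$ with $r > \langle g, c \rangle$, any sufficiently thin slab above $c$ will consist of points so close to $c$ that their $H$-values cannot reach $h$.

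Concretely, first I would fix $\eta = (h - H(c))/2 > 0$. By continuity of $H$ at $c$, choose $\delta > 0$ such that $|y - c| < \delta$ and $y \in Y$ imply $|H(y) - H(c)| < \eta$. Now apply Lemma~\ref{smalldelta} with $\epsilon = \delta$ to obtain some $r > \langle g, c \rangle$ with $\mathrm{diam}(\Delta(c, g, r)) < \delta$. Since $c \in \Delta(c, g, r)$, every $y \in \Delta(c, g, r)$ satisfies $|y - c| < \delta$, hence $H(y) < H(c) + \eta = (H(c) + h)/2 < h$. Compactness of $\Delta(c,g,r)$ (again from Lemma~\ref{smalldelta}) ensures the supremum is attained, so $\max_{x \in \Delta(c, g, r)} H(x) < h$.

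For the inclusion $\Delta(c, g, r) \subset D(c, h)$: because $r > \langle g, c \rangle$, by definition $\Delta(c, g, r) = \Delta_{+}(c, g, r) \subset c^{+}$, and the bound just established gives $H(y) < h$ for every $y \in \Delta(c, g, r)$. Since $h > H(c)$, we have $D(c, h) = D_{+}(c, h) = \{y \in c^{+} : H(y) \leq h\}$, which contains $\Delta(c, g, r)$.

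There is no real obstacle here: Lemmas~\ref{smalldelta}~and~\ref{boundedD} already encode the geometric work, and this statement is essentially a quantitative strengthening used to compare the ``slab'' neighbourhood $\Delta(c,g,r)$ with the ``sublevel cap'' $D(c,h)$. The only small care needed is to get a strict inequality on the maximum rather than just $\leq h$, which is why I would choose $\eta$ strictly less than $h - H(c)$ rather than equal to it.
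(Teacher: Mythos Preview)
Your proof is correct and takes essentially the same approach as the paper's: continuity of $H$ at $c$ gives a radius, Lemma~\ref{smalldelta} squeezes $\Delta(c,g,r)$ inside that radius, and compactness upgrades the pointwise bound to a strict bound on the maximum. The only cosmetic differences are that the paper uses the full gap $h-H(c)$ rather than half of it (compactness alone already yields the strict maximum, so your precaution about choosing $\eta$ strictly smaller is unnecessary), and the paper leaves the inclusion $\Delta(c,g,r)\subset D(c,h)$ implicit where you spell it out.
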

\begin{proof}
By continuity of $H$ at $c$ there is some $\epsilon>0$ such that $|x-c| < \epsilon$ implies that $|H(x) - H(c)| < h - H(c)$. By Lemma~\ref{smalldelta} we can choose $r>\langle g, c \rangle$ such that $\Delta(c,g,r)$ has nonempty interior and $\mathrm{diam}(\Delta(c,g,r)) < \epsilon$, i.e. $H(x) < h$ for $x \in \Delta(c,g,r)$. Thus $\Delta(c, g, r) \subset D(c,h)$, and by compactness of $\Delta(c, g, r)$, $\mathrm{max}_{x \in \Delta(c, g, r)}H(x) < h$. 
\end{proof}\\

\begin{lemma}
\label{boundedP1}
Given any $c \in Y\backslash\{0\}$, $h \in (0, H(c))$, there is some $r \in [0, \langle g, c\rangle)$ such that $\Delta(c, g,r)$ lies in $D(c,h)$, and $\mathrm{max}_{x \in \Delta(c, g, r)}H(x) > h$.
\end{lemma}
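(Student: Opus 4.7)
The plan is to mirror the argument of Lemma~\ref{boundedP} but using its downward counterpart Lemma~\ref{smalldelta1} in place of Lemma~\ref{smalldelta}. The key observation is that for $r < \langle g,c\rangle$, the point $c$ itself lies in $\Delta(c,g,r)$: indeed $c \in c^{-}$ and $\langle g,c\rangle > r$ gives $c \in \Delta_{+}(g,r)$, so $c \in \Delta_{-}(c,g,r) = \Delta(c,g,r)$. This will make the ``max'' clause essentially free and will also let us use $c$ as an ``anchor point'' when invoking continuity of $H$.

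First I would use continuity of $H$ at $c$ to select $\epsilon > 0$ such that $|x - c| < \epsilon$ implies $|H(x) - H(c)| < H(c) - h$; this is possible because $h < H(c)$. Then by Lemma~\ref{smalldelta1} (applied to $c \neq 0$), I can choose $r \in [0, \langle g,c\rangle)$ with $\mathrm{diam}(\Delta(c,g,r)) < \epsilon$. Since $c \in \Delta(c,g,r)$, any other point $x \in \Delta(c,g,r)$ satisfies $|x - c| \leq \mathrm{diam}(\Delta(c,g,r)) < \epsilon$, hence $H(x) > H(c) - (H(c) - h) = h$. Combined with the inclusion $\Delta(c,g,r) \subset c^{-}$, this yields $\Delta(c,g,r) \subset D_{-}(c,h) = D(c,h)$.

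For the second claim, because $c \in \Delta(c,g,r)$ we have
\[
\max_{x \in \Delta(c,g,r)} H(x) \geq H(c) > h,
\]
where the maximum exists by continuity of $H$ and compactness of $\Delta(c,g,r)$ (given by Lemma~\ref{smalldelta1}).

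There is no real obstacle here; the only small subtlety is remembering that $c$ itself lies in $\Delta(c,g,r)$ so that diameter control automatically bounds $|x - c|$ — the analogous step in Lemma~\ref{boundedP} was identical. The hypothesis $c \neq 0$ is needed only to invoke Lemma~\ref{smalldelta1}, which is unavailable at the origin because $c^{-} = \{0\}$ would collapse there.
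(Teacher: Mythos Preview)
Your proof is correct and follows essentially the same approach as the paper's: both use continuity of $H$ at $c$ to pick $\epsilon$, then Lemma~\ref{smalldelta1} to shrink $\Delta(c,g,r)$ to diameter less than $\epsilon$, concluding $H(x)>h$ throughout and hence $\Delta(c,g,r)\subset D(c,h)$. Your explicit observation that $c\in\Delta(c,g,r)$ makes the diameter-to-distance step and the $\max$ inequality slightly more transparent than the paper's terse version, but the argument is the same.
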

\begin{proof}
By continuity of $H$ at $c$ there is some $\epsilon$ such that $|x-c| < \epsilon$ implies that $|H(x) - H(c)| < H(c) - h$. By Lemma~\ref{smalldelta1} we can choose $r \in [0, \langle g, c\rangle)$ such that $\mathrm{diam}(\Delta(c,g,r)) < \epsilon$, i.e. $H(x) > h$ for $x \in \Delta(c,g,r)$. Thus $\Delta(c, g, r) \subset D(c,h)$, and by compactness of $\Delta(c, g, r)$, $\mathrm{max}_{x \in \Delta(c, g, r)}H(x) > h$. 
\end{proof}\\

Lemmas~\ref{boundedD}~and~\ref{boundedP} will be used as follows: 
\begin{enumerate}
\item Given any $c\in Y$, $k_2 > \langle g, c \rangle$, we construct the bounded convex set $\Delta(c,g,k_2)$.
\item We then choose $h \in (H(c), \inf\{H(x)\,|\,x \in P(c, g, k_2)\})$ so that $D(c,h) \subset \Delta(c,g,k_2)$ (Lemma~\ref{boundedD}).
\item Thirdly we choose $k_1$ satisfying $\langle g, c \rangle < k_1 < k_2$ such that $\Delta(c, g, k_1) \subset D(c,h)$ (Lemma~\ref{boundedP}). Thus $P(c, g, k_2)$ and $P(c, g, k_1)$ ``trap'' $S(c, h)$. 
\end{enumerate}
The construction is illustrated in Figure~\ref{stages}. A ray originating in $\Delta(c, g, k_1)$ and intersecting $P(c, g, k_2)$ must first intersect both $P(c, g, k_1)$ and $S(c, h)$. Similarly any ray originating in $\Delta(c, g, k_1)$ and intersecting $S(c, h)$ must first intersect $P(c, g, k_1)$. By results to follow, this last fact will imply that $S(c, h)$ is homeomorphic to a subset of $P(c, g, k_1)$, which can be shown to be a ball. An analogous construction follows from Lemmas~\ref{boundedD1} and \ref{boundedP1}. 

\begin{figure}[h]
\begin{minipage}[h]{0.32\textwidth}
\begin{center}
\includegraphics[width=0.9\textwidth]{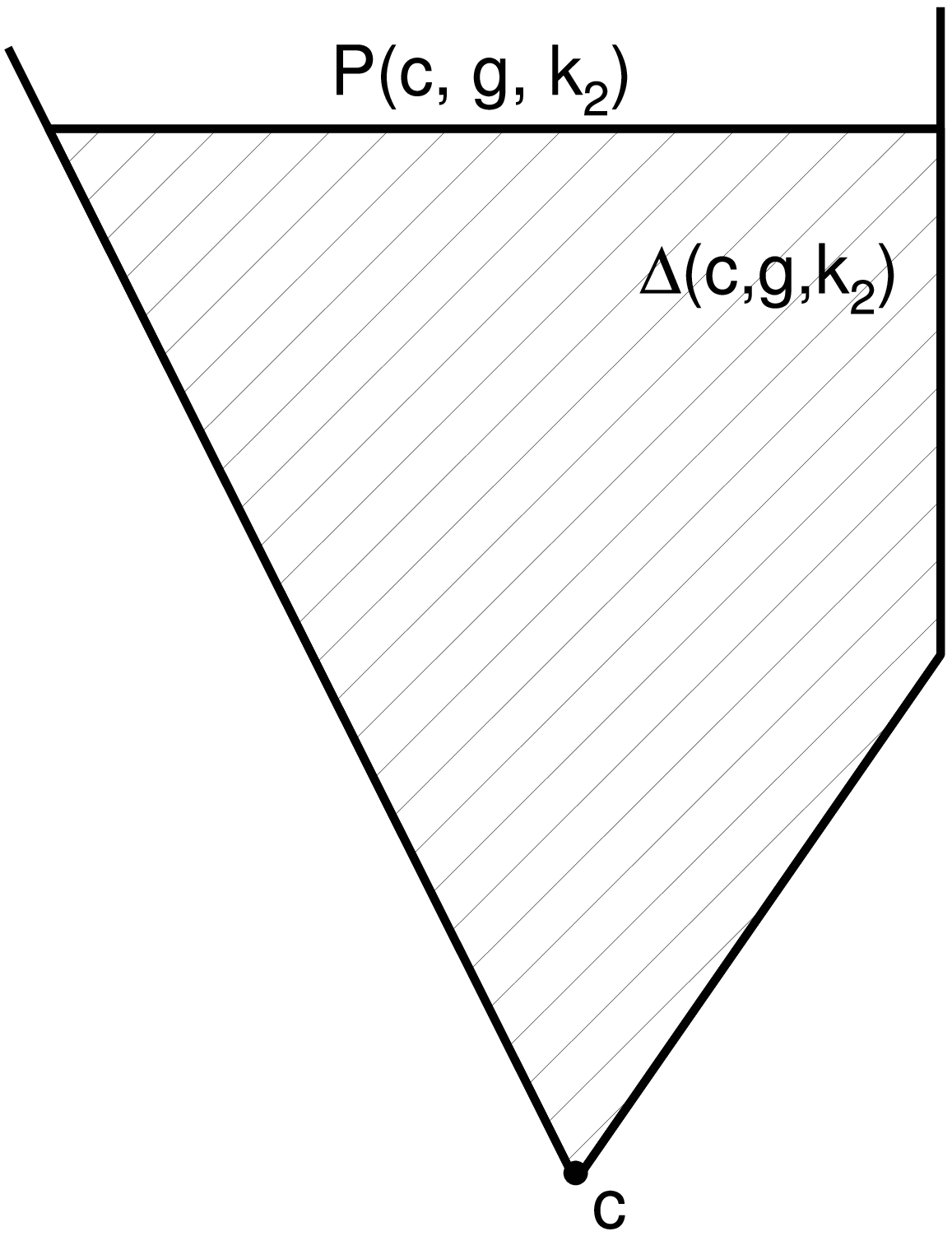}
\end{center}
\end{minipage}
\hfill
\begin{minipage}[h]{0.32\textwidth}
\begin{center}
\includegraphics[width=0.9\textwidth]{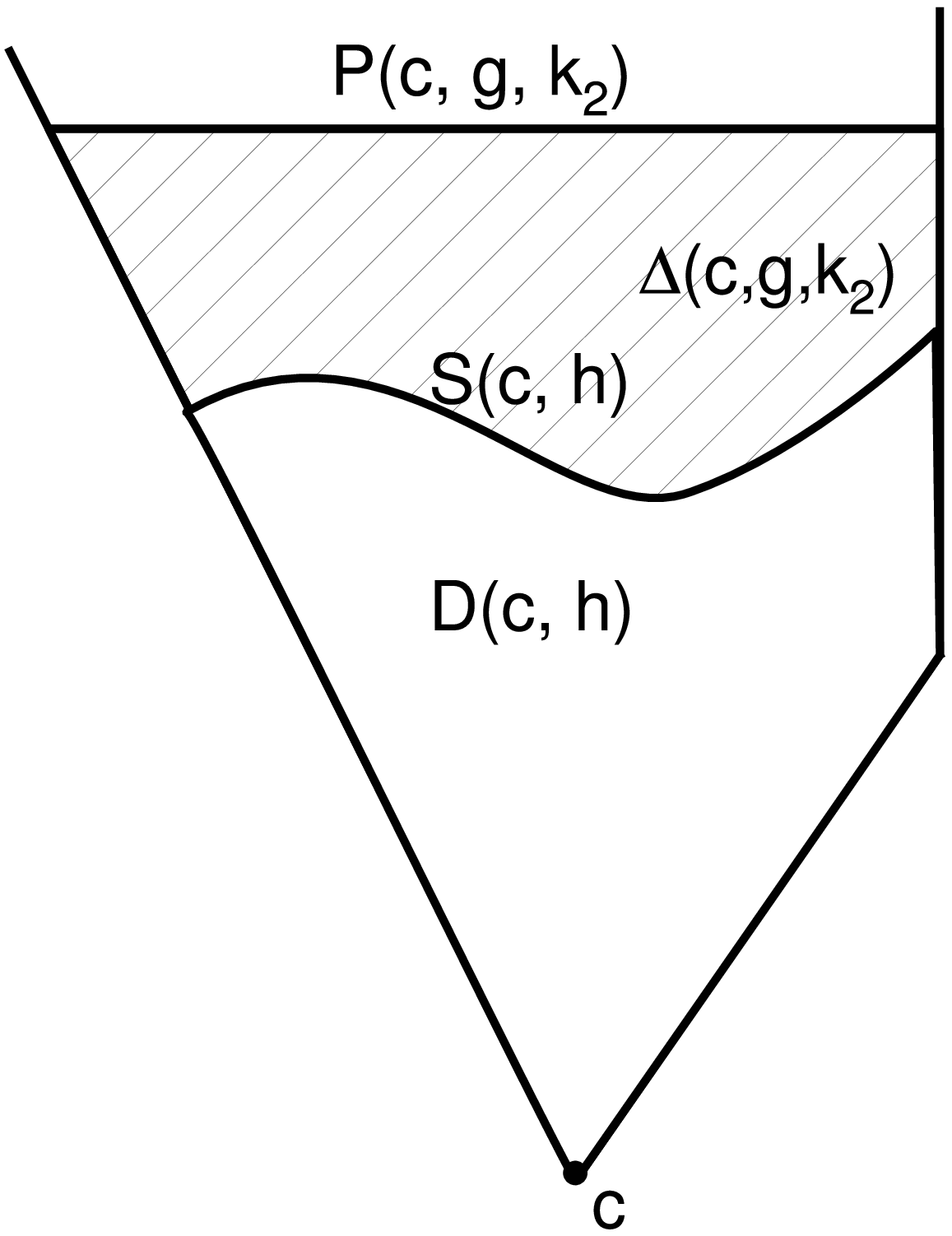}
\end{center}
\end{minipage}
\hfill
\begin{minipage}[h]{0.32\textwidth}
\begin{center}
\includegraphics[width=0.9\textwidth]{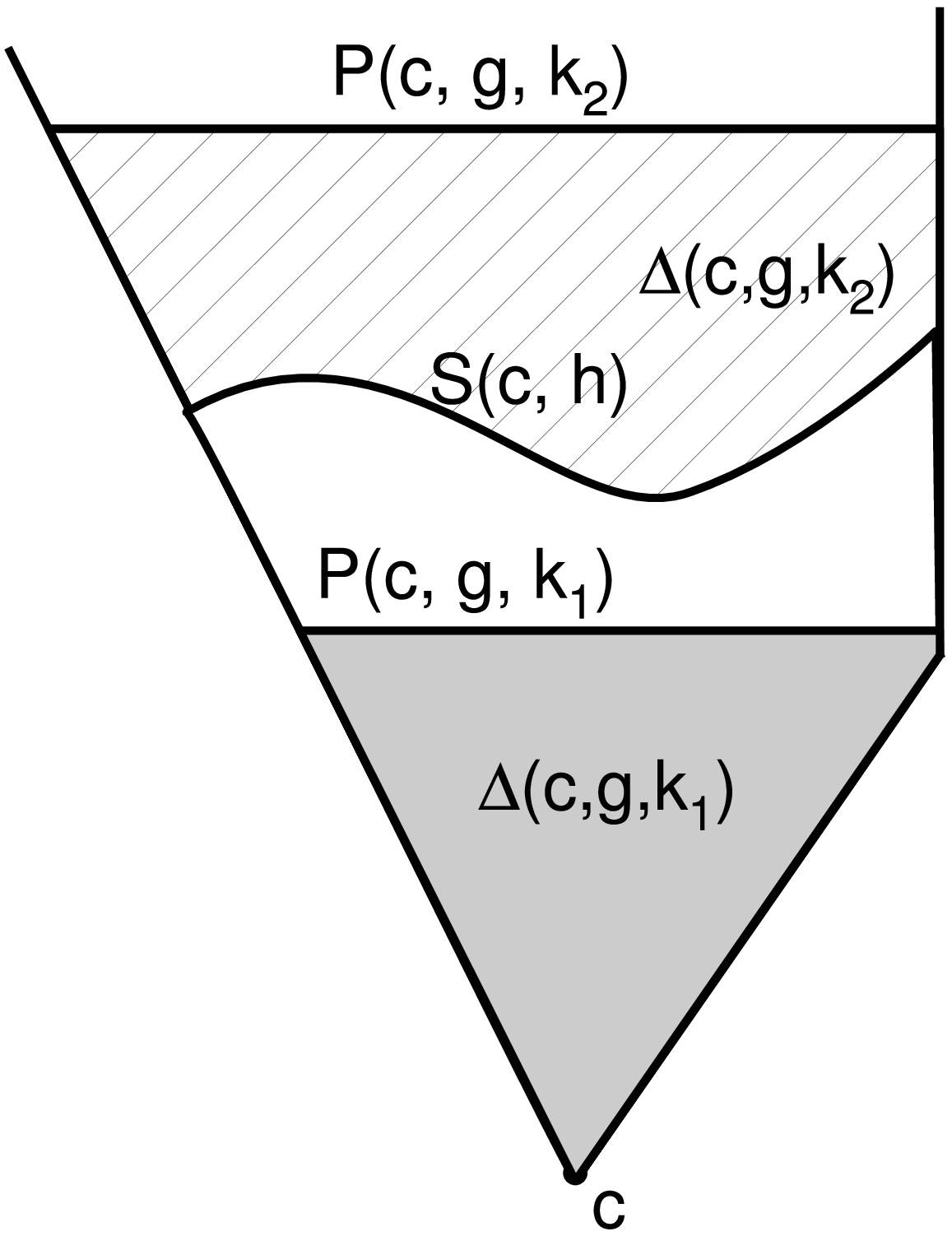}
\end{center}
\end{minipage}
\hfill
\begin{minipage}[h]{\textwidth}
\caption{\label{stages}A schematic representation of the construction of sets $P(c, g, k_1)$ and $P(c, g, k_2)$ trapping a set $S(c, h)$. {\em Left}. Given arbitrary $c, g$ and $k_2 > \langle g, c \rangle$, $\Delta(c,g,k_2)$ is constructed (hatched region). {\em Middle.} $h$ is chosen so that $S(c,h)$, and hence $D(c,h)$ (white region), lies inside $\Delta(c,g,k_2)$. {\em Right.} $k_1$ is chosen so that $S(c,h)$ is trapped between $P(c, g, k_1)$ and $P(c, g, k_2)$ and hence $\Delta(c, g, k_1)$ (shaded region) lies inside $D(c, h)$.}
\end{minipage}
\end{figure}

\section{Central projections}

We are working towards proving that for any $c \in E$, there is some $\epsilon_c > 0$ such that for any $h \in [0, H(c) + \epsilon_c)$, $S(c, h)$ is a ball. As in \cite{mierczynski}, a homeomorphism between $S(c, h)$ and a compact, convex set will be constructed via projections, for which we need some basic ideas developed in this section.

Any $c \in \mathbb{R}^n$, $X \subset \mathbb{R}^n$ define a natural cone $K(c, X)= \cup_{x \in X}r_{c \to x}$. $K(c, X)$ is not necessarily closed, convex, pointed or solid.

\vspace{0.25cm}
\begin{definition}
Given a point $c$, a set $X$ disjoint from $c$ and such that each ray originating at $c$ intersects $X$ at most once, define the projection $\Pi_{c, X}:K(c, X)\backslash\{c\}\mapsto X$ by $\Pi_{c, X}(y) \equiv r_{c \to y} \cap X$. 
\end{definition}
\vspace{0.25cm}

All discussion in this section is translation invariant, and it is convenient to assume, without loss of generality, that $c = 0$, and write $K(X)$ for $K(0, X)$. It is also useful to define: 
\[
K_{+}(X) \equiv \{tx\,|\, t \in (1,\infty), x \in X\}, \quad K_{-}(X) \equiv \{tx\,|\, t \in (0, 1), x \in X\}\,.
\]

\vspace{0.25cm}
\begin{lemma}
\label{basicproj}
Consider a set $X \subset \mathbb{R}^n\backslash\{0\}$, such that $K_{+}(X)$ and $K_{-}(X)$ are relatively open in $K(X)$. Assume that for each $x \in X$, $r_{0\to x} \cap X = \{x\}$ (i.e. the ray $r_{0\to x}$ intersects $X$ exactly once).  The projection $\Pi_{0, X}:K(X)\backslash\{0\}\mapsto X$ is continuous. 
\end{lemma}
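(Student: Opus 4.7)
The plan is to reduce continuity of $\Pi_{0,X}$ to continuity of the scalar ``depth'' function $t:K(X)\backslash\{0\}\to (0,\infty)$ defined by the relation $y = t(y)\,\Pi_{0,X}(y)$. Existence and uniqueness of such $t(y) > 0$ follow from the fact that $y$ lies on a ray $r_{0\to x}$ with $x \in X$, and that this ray meets $X$ only at $x$, so $x = \Pi_{0,X}(y)$ is the unique intersection. Since $\Pi_{0,X}(y) = y/t(y)$ and $y \mapsto y/t(y)$ will be continuous once $t$ is (note $t(y)>0$ on its domain), it suffices to establish continuity of $t$.

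The key observation is the pair of equivalences
\[
t(y) < s \iff y/s \in K_{-}(X), \qquad t(y) > s \iff y/s \in K_{+}(X),
\]
valid for any $s > 0$ and any $y \in K(X)\backslash\{0\}$. These are immediate from the definitions: if $t(y) < s$, then $y/s = (t(y)/s)\,\Pi_{0,X}(y)$ with $t(y)/s \in (0,1)$, so $y/s \in K_{-}(X)$; conversely, if $y/s = rx'$ with $r \in (0,1)$ and $x' \in X$, then $y = (sr)x'$ with $x' \in X$, and uniqueness of the ray-intersection forces $x' = \Pi_{0,X}(y)$ and $t(y) = sr < s$. The analogous statement for $K_{+}(X)$ is symmetric.

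Given these equivalences, I would argue as follows. The map $y \mapsto y/s$ is a homeomorphism of $\mathbb{R}^n$ that carries $K(X)$ onto $K(X)$ (since $K(X)$ is a union of rays from the origin, hence invariant under positive scaling). Under this homeomorphism, the (relatively open, by hypothesis) sets $K_{-}(X)$ and $K_{+}(X)$ pull back to relatively open subsets of $K(X)\backslash\{0\}$. Hence for any $0 \leq a < b \leq \infty$, the set
\[
t^{-1}\bigl((a,b)\bigr) = \{y : y/b \in K_{-}(X)\} \cap \{y : y/a \in K_{+}(X)\}
\]
(with the convention that the relevant condition is omitted when $a=0$ or $b=\infty$) is relatively open in $K(X)\backslash\{0\}$. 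Thus $t$ is continuous, and consequently so is $\Pi_{0,X} = \mathrm{id}/t$.

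I do not anticipate a serious obstacle here; the content of the hypotheses is essentially designed to make this argument go through. The only subtle point is the verification that $t(y)<s \iff y/s \in K_{-}(X)$, which requires using both the defining property of $K_{-}(X)$ as a set of $rx$ with $r\in(0,1)$ and the uniqueness of ray-intersections with $X$ to exclude the possibility that $y/s$ is an interior-of-ray multiple of a \emph{different} point of $X$.
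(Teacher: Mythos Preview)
Your proof is correct and follows essentially the same approach as the paper: both arguments reduce continuity of $\Pi_{0,X}$ to continuity of a scalar function along rays, and both extract this continuity from the relative openness of $K_\pm(X)$. The only differences are cosmetic: the paper uses the reciprocal convention $\Pi_{0,X}(x)=t(x)x$ and gives a direct $\epsilon$--$\delta$ argument (choosing neighbourhoods of $t_1x\in K_-(X)$ and $t_2x\in K_+(X)$), whereas you phrase the same idea via the preimage characterisation $t^{-1}((a,b))=\{y:y/b\in K_-(X)\}\cap\{y:y/a\in K_+(X)\}$ and the fact that positive scalings are self-homeomorphisms of $K(X)$.
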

\begin{proof}
Given any $x \in K(X)\backslash\{0\}$ define $t(x)$ via $\Pi_{0, X}(x) = t(x)x$. Fix $x \in K(X)\backslash\{0\}$ and any $\epsilon \in (0, t(x))$. Let $t_1 = t(x) - \epsilon$, and $t_2 = t(x) + \epsilon$. By construction $t_1x \in K_{-}(X)$ and $t_2x \in K_{+}(X)$. Let $\epsilon_1$ and $\epsilon_2$ be the diameters of relatively open neighbourhoods of $t_1x$ in $K_{-}(X)$ and $t_2x$ in $K_{+}(X)$ respectively. Define $\delta = \min\{\epsilon_1/t_1, \epsilon_2/t_2\}$, and choose any $y \in K(X)\backslash\{0\}$ such that $|y-x| < \delta$. We get that $|t_1y - t_1x| < \epsilon_1$, so that $t_1y \in K_{-}(X)$. Similarly $|t_2y - t_2x| < \epsilon_2$, i.e. $t_2y \in K_{+}(X)$. Thus $t_1 < t(y) < t_2$, i.e. $|t(y) - t(x)| < \epsilon$. Thus $t(x)$ is continuous, and hence $\Pi_{0, X}$ is continuous. 
\end{proof}\\

\begin{lemma}
\label{compactX}
Consider a compact set $X \subset \mathbb{R}^n\backslash\{0\}$ such that for each $x \in X$, $r_{0\to x} \cap X = \{x\}$. Then $\Pi_{0, X}:K(X)\backslash\{0\}\mapsto X$ is continuous. 
\end{lemma}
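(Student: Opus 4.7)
The plan is to reduce the claim to Lemma~\ref{basicproj} by checking its hypothesis: that $K_{+}(X)$ and $K_{-}(X)$ are relatively open in $K(X)$. The compactness of $X$ together with $0 \not\in X$ is precisely what makes this work, since then $X$ is both bounded and bounded away from the origin, which in turn controls the scalars appearing in the decomposition $y = tx$.

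For $K_{+}(X)$ I would argue by contradiction. If it were not relatively open in $K(X)$, there would be some $y = tx \in K_{+}(X)$ with $t>1$, $x \in X$, and a sequence $y_n = t_n x_n \in K(X)\backslash K_{+}(X)$ converging to $y$, where $x_n \in X$ and $t_n \in [0,1]$. Compactness of $X$ and boundedness of $(t_n)$ let me extract a subsequence with $x_n \to x^{*} \in X$ and $t_n \to t^{*} \in [0,1]$, so $y = t^{*}x^{*}$. Since $y \neq 0$ we have $t^{*} > 0$, hence $x^{*} = (t/t^{*})\,x$ is a point of $X$ lying on the ray $r_{0\to x}$. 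The assumption $r_{0\to x}\cap X = \{x\}$ forces $x^{*} = x$ and then $t^{*} = t > 1$, contradicting $t^{*} \leq 1$.

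The argument for $K_{-}(X)$ is entirely analogous. Starting from $y = tx \in K_{-}(X)$ with $t \in (0,1)$ and a sequence $y_n = t_n x_n \in K(X)\backslash K_{-}(X)$ converging to $y$, one has $t_n \in \{0\}\cup[1,\infty)$, and since $y_n \to y \neq 0$, eventually $t_n \geq 1$. The estimate $|y_n| = t_n |x_n|$ combined with $|x_n|$ being bounded above and away from zero (by compactness of $X$ and $0 \not\in X$) shows $(t_n)$ is bounded, so a subsequence yields $x_n \to x^{*} \in X$, $t_n \to t^{*} \geq 1$; the same one-intersection argument produces $x^{*}=x$ and $t^{*}=t<1$, a contradiction. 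With both $K_{+}(X)$ and $K_{-}(X)$ now known to be relatively open in $K(X)$, Lemma~\ref{basicproj} applies verbatim and delivers continuity of $\Pi_{0,X}$.

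The main obstacle is purely bookkeeping — justifying that the limit pair $(t^{*}, x^{*})$ exists and that $t^{*}$ stays bounded in the $K_{-}(X)$ case. No new geometric idea beyond those in Lemma~\ref{basicproj} is required; compactness does all of the additional work.
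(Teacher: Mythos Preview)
Your proof is correct and follows essentially the same strategy as the paper: both reduce to Lemma~\ref{basicproj} by a sequential compactness argument showing that $K_{+}(X)$ and $K_{-}(X)$ are relatively open in $K(X)$. The only cosmetic difference is in the final contradiction --- the paper produces a limit point in $\mathrm{cl}(X)\setminus X$ (violating closedness), whereas you invoke the single-intersection hypothesis directly to force $t^{*}=t$.
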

\begin{proof}
Continuity of $\Pi_{0, X}$ follows immediately from Lemma~\ref{basicproj} provided $K_{+}(X)$ and $K_{-}(X)$ are relatively open in $K(X)$. Assume first that $K_{-}(X)$ is not relatively open. This means that there is a point $q \in K_{-}(X)$ such that every neighbourhood of $q$ contains points in $K_{+}(X)$. 
Since $q \in K_{-}(X)$, for some fixed $t > 1$, $tq \in X$. Take a sequence of points $q_i \to q$ with $q_i \in K_{+}(X)$, and the sequence of values $\Pi_{0, X}(q_i)=t_iq_i \in X$ with $t_i < 1$. $(t_i)$ is a bounded real sequence, and by passing to a subsequence if necessary, we get a convergent sequence of values $t_{i_k}$ such that $\lim_{k \to \infty} t_{i_k} = t^{'} \leq 1$. Thus $t_{i_k}q_{i_k} \to t^{'}q \in \mathrm{cl}(X)$. As each ray intersects $X$ exactly once and $tq \in X$, $t^{'}q \not \in X$ and $X$ is not closed. 

Now assume that $K_{+}(X)$ is not relatively open in $K(X)$, i.e. there is some  point $q \in K_{+}(X)$ and a sequence of points $q_i \to q$ with $q_i \in K_{-}(X)$. Since $q \in K_{+}(X)$, for some fixed $t < 1$, $tq \in X$. Define the sequence of values $\Pi_{0, X}(q_i)=t_iq_i \in X$ with $t_i > 1$. Since $q_i$ is bounded away from zero 
and $X$ is bounded, $(t_i)$ is bounded, and by passing to a subsequence if necessary, we get a convergent sequence of values $t_{i_k}$ such that $\lim_{k \to \infty} t_{i_k} = t^{'} \geq 1$. Thus $t_{i_k}q_{i_k} \to t^{'}q \in \mathrm{cl}(X)$. As each ray intersects $X$ exactly once and $tq \in X$, again $X$ is not closed. 
\end{proof}\\

\begin{lemma}
\label{convexproj}
Consider a compact, convex set $X \subset \mathbb{R}^n$, with $0 \in \mathrm{ri}(X)$. The projection $\Pi_{0, \partial X}:K(\partial X)\backslash\{0\}\mapsto \partial X$ is well defined and continuous. 
\end{lemma}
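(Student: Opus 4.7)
The strategy is to show that the hypotheses of Lemma~\ref{compactX} are satisfied with $\partial X$ playing the role of the target set, after which the conclusion is immediate.

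First I would verify the easy facts: $\partial X$ is compact as a closed subset of the compact set $X$, and $0 \notin \partial X$ because $0 \in \mathrm{ri}(X)$, which is disjoint from $\mathrm{relbd}(X)$. (I am reading $\partial X$ here as the relative boundary $\mathrm{relbd}(X)$, consistent with the appearance of $\mathrm{ri}(X)$ in the hypothesis; in the full-dimensional case the two notions coincide.) Thus $\partial X \subset \mathbb{R}^n \setminus \{0\}$ and $\partial X$ is compact.

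The key step is to show that for each $x \in \partial X$ the ray $r_{0 \to x}$ meets $\partial X$ only at $x$. Since $0 \in X$ and $x \in X$, the direction $x$ lies in $\mathrm{Aff}(X)$, so property {\bf O2} applies to the ray $\{tx \,|\, t \geq 0\}$ based at $0 \in \mathrm{ri}(X)$. The ray cannot lie entirely in $X$ (as $X$ is bounded), so it exits at some $t' > 0$, with $t'x$ the unique point of $\{tx \,|\, t \geq 0\} \cap \mathrm{relbd}(X)$. Since $x \in \partial X = \mathrm{relbd}(X)$ already sits on this ray, uniqueness forces $t' = 1$, and hence $r_{0 \to x} \cap \partial X = \{x\}$. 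This simultaneously establishes that every point of $K(\partial X) \setminus \{0\}$ determines a unique $x \in \partial X$ on its ray from $0$, so that $\Pi_{0,\partial X}$ is well defined.

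With compactness of $\partial X$ and the unique-intersection property in hand, Lemma~\ref{compactX} applied to $\partial X$ delivers continuity of $\Pi_{0,\partial X}: K(\partial X)\backslash\{0\} \to \partial X$. The only real subtlety is the invocation of {\bf O2}, where one must remember that $\mathrm{ri}(X)$ rather than $\mathrm{int}(X)$ appears in the hypothesis, so that the ray direction must be verified to lie in $\mathrm{Aff}(X)$; this is automatic here because $0, x \in X$. Every other step is routine given the preliminary lemmas.
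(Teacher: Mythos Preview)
Your argument is correct and follows essentially the same route as the paper: use {\bf O2} to get the unique-intersection property, then invoke one of the projection-continuity lemmas. The only cosmetic difference is that the paper applies Lemma~\ref{basicproj} directly (observing that $K_{+}(\partial X) = \mathrm{Aff}(X)\setminus X$ and $K_{-}(\partial X) = \mathrm{ri}(X)\setminus\{0\}$ are relatively open), whereas you cite Lemma~\ref{compactX}, which packages the same verification via compactness of $\partial X$.
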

\begin{proof}
By {\bf O2}, for any $x \in \mathrm{Aff}(X)$, $r_{0 \to x}$ intersects $\partial X$ at exactly one point, and so $K(\partial X) = \mathrm{Aff}(X)$, and $\Pi_{0, \partial X}$ is well defined. $K_{+}(\partial X) = \mathrm{Aff}(X)\backslash \mathrm{cl}(X)$ and $K_{-}(\partial X) = \mathrm{ri}(X)\backslash\{0\}$ are open, and so by Lemma~\ref{basicproj}, $\Pi_{0, \partial X}$ is continuous. 
\end{proof}\\

\begin{lemma}
\label{twosets}
Consider a compact set $X \subset \mathbb{R}^n\backslash\{0\}$ and some bounded set $Y \subset \mathbb{R}^n\backslash\{0\}$. Assume that for each $x \in X$, $r_{0\to x} \cap X = \{x\}$ and $r_{0\to x} \cap Y$ is a singleton. Then the set $Y_0 \equiv \Pi_{0, Y}(X)$ is homeomorphic to $X$. 
\end{lemma}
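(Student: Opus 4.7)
The plan is to exhibit $Y_{0}$ as the image of $X$ under the radial projection $\Pi_{0,Y}$, and to verify that this map and its evident geometric inverse $\Pi_{0,X}|_{Y_{0}}$ are both continuous. The main work is in checking that the map into $Y_{0}$ is continuous, which will reduce to a bounded-subsequence argument combined with the singleton hypothesis on ray-intersections with $Y$.

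More precisely, I would set $h = \Pi_{0,Y}|_{X} : X \to Y_{0}$ and $g = \Pi_{0,X}|_{Y_{0}} : Y_{0} \to X$. The hypothesis that $r_{0\to x}\cap Y$ is a singleton makes $h$ well-defined and surjective onto $Y_{0}$. Each $y \in Y_{0}$ lies on a ray $r_{0\to x}$ for some $x \in X$, so $Y_{0} \subset K(X)\setminus\{0\}$, and because $r_{0\to x}\cap X = \{x\}$, the projection $g$ picks out the unique such $x$. Direct ray-chasing shows $g\circ h = \mathrm{id}_{X}$ and $h\circ g = \mathrm{id}_{Y_{0}}$, so $h$ and $g$ are mutually inverse bijections. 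Continuity of $g$ is immediate from Lemma~\ref{compactX}: $X$ is compact with each ray meeting it exactly once, so $\Pi_{0,X}$ is continuous on $K(X)\setminus\{0\}$, and $g$ is its restriction to $Y_{0}$.

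For continuity of $h$ I would use a subsequential argument. Given $x_{n}\to x$ in $X$, write $h(x_{n}) = t(x_{n})\,x_{n}$ for the unique $t(x_{n})>0$ with $t(x_{n})\,x_{n}\in Y$. Boundedness of $Y$ together with $|x_{n}|\to|x|>0$ forces $t(x_{n})$ to be bounded, so every subsequence of $(h(x_{n}))$ has a further subsequence along which $h(x_{n_{k}}) \to y^{*}$. Since $h(x_{n_{k}})/|h(x_{n_{k}})| = x_{n_{k}}/|x_{n_{k}}| \to x/|x|$, the limit has the form $y^{*} = t^{*}x$ for some $t^{*}\geq 0$, i.e.\ lies on the ray $r_{0\to x}$. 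The singleton hypothesis $r_{0\to x}\cap Y = \{h(x)\}$ then identifies $y^{*} = h(x)$, so the original sequence $(h(x_{n}))$ converges to $h(x)$ and $h$ is continuous.

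The main obstacle — and the step where some care is needed — is the final identification $y^{*}=h(x)$: one must know that the subsequential limit $y^{*}$ actually lies in $Y$ (or at least in $r_{0\to x}\cap Y$) before invoking the singleton condition. In the intended applications $Y$ will be closed (typically a level set $S(c,h)$ of the continuous first integral $H$), so $y^{*}\in Y$ is automatic; ruling out the degenerate possibility $t^{*}=0$ similarly uses that $Y$ stays separated from the origin along the ray $r_{0\to x}$. With this identification in hand, $h$ and $g$ furnish the required homeomorphism $X \cong Y_{0}$.
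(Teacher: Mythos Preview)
Your approach matches the paper's: both set up $\Pi_{0,Y}|_X$ and $\Pi_{0,X}|_{Y_0}$ as mutual inverses and invoke Lemma~\ref{compactX} (applied to the compact set $X$) for continuity of the latter. The one difference is how continuity of $\Pi_{0,Y}|_X$ is obtained: you give a direct subsequential argument, whereas the paper first asserts that $Y_0$ is closed (hence compact, since $Y$ is bounded) and then applies Lemma~\ref{compactX} a \emph{second} time, now to the compact set $Y_0$, to deduce continuity of $\Pi_{0,Y_0}$ and hence of $\Pi_{0,Y}|_X$. Your direct argument is essentially what that second application of Lemma~\ref{compactX} unpacks to.

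The gap you flag is genuine and is shared by the paper's proof: without $Y$ closed, your subsequential limit $y^*$ need not lie in $Y$, and equally the paper's step ``and so $Y_0$ is closed'' does not follow from the stated hypotheses. (For instance, take $X=S^1\subset\mathbb{R}^2$ and let $Y$ be the circle of radius $2$ with the single point $(2,0)$ replaced by $(3,0)$; all hypotheses hold, but $Y_0=Y$ is not closed and hence not homeomorphic to $S^1$.) In every use of the lemma in the paper the set playing the role of $Y$ is in fact closed, so your remark that closedness should be read as an implicit extra hypothesis is exactly right.
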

\begin{proof}
By construction, $\left.\Pi_{0, Y}\right|_X$ is a bijection between $X$ and $Y_0$. By Lemma~\ref{compactX}, the inverse mapping $\left.\Pi_{0, X}\right|_{Y_0}$  is continuous on $Y_0$, and so $Y_0$ is closed. Since $Y$ is bounded, $Y_0$ is compact. Applying Lemma~\ref{compactX} again, $\left.\Pi_{0, Y}\right|_X$ is continuous, and so $Y_0$ and $X$ are homeomorphic. 
\end{proof}\\

\begin{lemma}
\label{lemball}
Consider a nonempty, compact set $X\subset \mathbb{R}^n$ with $0 \in \mathrm{ri}(X)$, and such that for each $x \in \mathrm{relbd}(X)$, $r_{0 \to x} \cap \mathrm{relbd}(X) = \{x\}$. Then $X$ is a ball. 
\end{lemma}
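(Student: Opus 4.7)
The plan is to construct an explicit homeomorphism between $X$ and a small closed Euclidean ball $B$ in $\mathrm{Aff}(X)$; since $B$ is compact and convex with $\dim(\mathrm{Aff}(B)) = \dim(\mathrm{Aff}(X))$, this will exhibit $X$ as a ball of the correct dimension. If $\dim(\mathrm{Aff}(X)) = 0$ then $X = \{0\}$ is trivially a $0$-dimensional ball, so assume $\dim(\mathrm{Aff}(X)) \geq 1$ and fix $r > 0$ small enough that $B := \{y \in \mathrm{Aff}(X) : |y| \leq r\}$ lies in $X$, which is possible since $0 \in \mathrm{ri}(X)$.

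First I would observe that $\mathrm{relbd}(X) = X \setminus \mathrm{ri}(X)$ is closed in the compact set $X$, hence compact, and does not contain $0$. Combined with the standing hypothesis, this places us in the setting of Lemma~\ref{compactX}, so the central projection $\Pi_{0, \mathrm{relbd}(X)}$ is continuous on $K(\mathrm{relbd}(X)) \setminus \{0\}$. Next, for any $y \in \mathrm{Aff}(X) \setminus \{0\}$, the set $\{t \geq 0 : ty/|y| \in X\}$ is closed (as $X$ is closed) and bounded above (as $X$ is bounded), so it attains a maximum $\rho(y) > 0$ with $\rho(y) y/|y| \in \mathrm{relbd}(X)$. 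Thus $K(\mathrm{relbd}(X)) \supset \mathrm{Aff}(X) \setminus \{0\}$, and the function $\rho$ is well defined and continuous on $\mathrm{Aff}(X) \setminus \{0\}$, depending only on the direction of $y$; moreover $\rho(y) \geq r$ everywhere, because $B \subset X$.

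The main technical step is verifying that $X$ is star-shaped from $0$. If some point $ty/|y|$ with $0 < t < \rho(y)$ lay outside $X$, then the closed subset $\{s \in [0, \rho(y)] : sy/|y| \in X\}$ of $[0,\rho(y)]$ would have a bounded gap $(t_1, t_2)$, producing two distinct boundary points $t_1 y/|y|$ and $t_2 y/|y|$ in $\mathrm{relbd}(X)$ on the same ray $r_{0 \to y}$. This contradicts the hypothesis that such a ray meets $\mathrm{relbd}(X)$ at most once, so the whole segment $[0, \rho(y) y/|y|]$ lies in $X$, and in particular every $y \in X$ satisfies $|y| \leq \rho(y)$.

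Finally I would define $f : X \to B$ by $f(0) = 0$ and $f(y) = ry/\rho(y)$ for $y \neq 0$. Continuity on $X \setminus \{0\}$ is immediate from continuity of $\rho$, and the bound $|f(y)| = r|y|/\rho(y) \leq |y|$ delivers continuity at the origin; by star-shapedness $f$ lands in $B$. The map is a bijection with inverse $z \mapsto \rho(z) z/r$ (well defined since $\rho$ is constant along rays), so being a continuous bijection from the compact space $X$ to the Hausdorff space $B$ it is a homeomorphism, and $X$ is a ball. The main obstacle is the star-shapedness argument: extracting enough from the ``exactly one intersection'' hypothesis, together with closedness of $X$, to rule out gaps along rays from $0$; once this is established the rest is routine assembly of continuity and bijectivity facts.
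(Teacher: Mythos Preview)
Your proof is correct and follows essentially the same route as the paper's: both construct the radial homeomorphism $y \mapsto y/|\Pi_{0,\mathrm{relbd}(X)}(y)|$ (your $\rho$ is exactly the paper's $l$) onto a Euclidean ball in $\mathrm{Aff}(X)$, invoking Lemma~\ref{compactX} for continuity of the projection and handling the origin separately via the positive lower bound on $\rho$. The only noteworthy difference is that you make explicit the star-shapedness argument (ruling out gaps on rays via the single-intersection hypothesis) which the paper sweeps under a ``clearly'', and you conclude via the compact-to-Hausdorff bijection trick rather than checking continuity of the inverse directly; both are minor expository choices rather than a different method.
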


\begin{proof}
If $\mathrm{Aff}(X)$ has dimension $0$, $X$ is a single point, which is by our definition a $0$-dimensional ball. Assume $\mathrm{Aff}(X)$ has dimension $k > 0$. By compactness of $X$, each nontrivial ray originating at $0$ must eventually enter $\mathrm{Aff}(X)\backslash X$, and hence must intersect $\mathrm{relbd}(X)$: so $K(\mathrm{relbd}(X)) = \mathrm{Aff}(X)$. As $\mathrm{relbd}(X)$ is compact, we can apply Lemma~\ref{compactX} to $\mathrm{relbd}(X)$, giving that $\Pi_{0, \mathrm{relbd}(X)}: \mathrm{Aff}(X)\backslash\{0\} \to \mathrm{relbd}(X)$ is continuous. It follows that $l(x) \equiv |\Pi_{0, \mathrm{relbd}(X)}(x)|$ is continuous on $\mathrm{Aff}(X)\backslash\{0\}$, and by compactness of $ \mathrm{relbd}(X)$,
\[
0 < \min_{x \in \mathrm{relbd}(X)}|x| \equiv l_{min} \leq l(x) \leq l_{max} \equiv \max_{x \in \mathrm{relbd}(X)}|x|< \infty\,.
\]
Let $B = \{x \in \mathrm{Aff}(X)\,|\,\,|x| \leq 1\}$. Clearly $G:\mathrm{Aff}(X)\backslash\{0\} \to B$ defined by $G(x) =  x/l(x)$ takes $X\backslash\{0\}$ homeomorphically to $B\backslash\{0\}$. Defining $G(0) = 0$, consider any sequence of points $x_i \to 0$. Then $G(x_i) = x_i/l(x_i) \leq x_i/l_{min} \to 0$, and $G^{-1}(x_i) = l(x_i)x_i \leq l_{max}x_i  \to 0$. So $G$ is a homeomorphism between $X$ and $B$, i.e. $X$ is a $k$-dimensional ball. 
\end{proof}\\

\section{Main results}

The first, easy use of the central projections discussed in the previous section is that bounded, nonempty level sets of $H$ contain equilibria:

\vspace{0.25cm}
\begin{lemma}
\label{boundedeq}
For $h \in [0, M)$, if $S(0,h)$ is bounded, then it intersects $E$.
\end{lemma}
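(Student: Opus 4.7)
The plan is to show $S(0,h)$ is homeomorphic to a closed ball, apply Brouwer's fixed-point theorem to the continuous self-maps $\phi_t|_{S(0,h)}$, and then use a semigroup-intersection argument to extract an equilibrium. The case $h=0$ is immediate since $S(0,0)=\{0\}$ and $F(0)=0$. So assume $h\in(0,M)$; then $S(0,h)$ is compact, and by Lemma~\ref{forwardinvar1} with $c=0$ it is forward invariant under $\phi$.

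Central-project $S(0,h)$ from the origin onto $P(0,g,r)$ for any fixed $r>0$. Along each ray $\{ty:t\ge 0\}$ with $y\in Y\backslash\{0\}$, both $H(ty)$ (derivative $\langle\nabla H(ty),y\rangle>0$) and $\langle g,ty\rangle=t\langle g,y\rangle$ strictly increase in $t$, so the ray meets each of $S(0,h)$ and $P(0,g,r)$ at most once. Consequently, by Lemma~\ref{compactX}, the map $\Pi \equiv \Pi_{0,P(0,g,r)}$ restricted to $S(0,h)$ is a continuous injection into the $(n-1)$-dimensional ball $P(0,g,r)$ (Lemma~\ref{cc1}); compactness then makes it a homeomorphism onto its image $U$. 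The crux is to show $U=P(0,g,r)$, and for this I would prove that $U_0:=U\cap\mathrm{ri}(P(0,g,r))$ is clopen and nonempty in the connected set $\mathrm{ri}(P(0,g,r))$. Openness: at any $z^*\in U_0$ with preimage $y^*=t^*z^*\in S(0,h)\cap\mathrm{int}(Y)$, apply the implicit function theorem to $G(t,z)=H(tz)-h$, using $\partial_t G(t^*,z^*)=\langle\nabla H(y^*),z^*\rangle>0$, to solve $t=t(z)$ on a $P(g,r)$-neighborhood of $z^*$ lying in $\mathrm{ri}(P(0,g,r))$; each such $z$ projects into $U_0$. Closedness uses the boundedness hypothesis: given $z_n\in U_0$ with $z_n\to z^*\in\mathrm{ri}(P(0,g,r))$, the preimages $y_n=t_nz_n\in S(0,h)$ satisfy $t_n=|y_n|/|z_n|$, and $|y_n|$ is bounded while $|z_n|\ge\langle g,z_n\rangle=r$, so a convergent subsequence yields $t_n\to t^*$, with $t^*>0$ since $H(t^*z^*)=h>0$; then $y^*=t^*z^*\in S(0,h)\cap\mathrm{int}(Y)$ and $z^*=\Pi(y^*)\in U_0$. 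Nonemptiness: since $H$ surjects onto $[0,M)$ pick $y_0\in Y$ with $H(y_0)>h$, and perturb to $y_0'=y_0+\epsilon p$ for any $p\in\mathrm{int}(Y)$ and small $\epsilon>0$ to land inside $\mathrm{int}(Y)$ with $H(y_0')>h$; IVT along $[0,y_0']$ produces a point of $S(0,h)\cap\mathrm{int}(Y)$ whose projection lies in $U_0$. Hence $U_0=\mathrm{ri}(P(0,g,r))$, and since $U$ is closed in $P(0,g,r)$ it contains the closure $P(0,g,r)$, so $S(0,h)$ is an $(n-1)$-ball.

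To finish, Brouwer's theorem applied to $\phi_t|_{S(0,h)}$ gives nonempty closed fixed-point sets $F_t:=\mathrm{Fix}(\phi_t|_{S(0,h)})$ for every $t>0$. The semigroup relation $F_s\subset F_{ns}$ for $n\in\mathbb{N}$ implies $F_{1/N}\subset\bigcap_i F_{1/n_i}$ with $N=\mathrm{lcm}(n_1,\ldots,n_k)$, giving the finite intersection property for $\{F_{1/n}\}_{n\in\mathbb{N}}$; by compactness of $S(0,h)$, $\bigcap_n F_{1/n}\ne\emptyset$. Any $y^*$ in this intersection satisfies $\phi_q(y^*)=y^*$ for all positive rationals $q$, and continuity of $t\mapsto\phi_t(y^*)$ upgrades this to $y^*\in E\cap S(0,h)$. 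The main obstacle is the clopen step in the middle paragraph: boundedness of $S(0,h)$ is essential for the closedness half, and surjectivity of $H$ combined with the fact that points of $Y$ with $H>h$ can be pushed into $\mathrm{int}(Y)$ is what gives nonemptiness.
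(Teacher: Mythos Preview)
Your proof is correct and follows the same overall strategy as the paper: show $S(0,h)$ is an $(n-1)$-ball via central projection onto $P(0,g,r)$, then extract an equilibrium by a fixed-point argument. The difference lies in how you establish that the projection is \emph{onto}. The paper observes directly that boundedness of $S(0,h)$ forces boundedness of $D(0,h)$, so every ray $r_{0\to y}$ with $y\in Y\setminus\{0\}$ (being unbounded in $Y$) must exit $D(0,h)$ and hence cross $S(0,h)$ exactly once; since the same holds for $P(0,g,r)$ by boundedness of $\Delta(0,g,r)$, Lemma~\ref{twosets} delivers the homeomorphism in one stroke. Your clopen/IFT argument for $U_0=\mathrm{ri}(P(0,g,r))$ reaches the same conclusion but is more laborious, essentially re-deriving surjectivity without using that every ray already hits $S(0,h)$. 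For the fixed-point step, the paper simply cites a standard result that a forward-invariant ball for a semiflow contains an equilibrium, whereas your Brouwer-plus-finite-intersection argument spells this out explicitly; that is perfectly valid, just more detail than the paper chose to include.
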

\begin{proof}
When $h=0$, $S(0, h) = \{0\}$ and the result is immediate. For $h \not = 0$, fix any $r > 0$. Boundedness of $S(0,h)$ implies boundedness of $D(0, h)$. Given any $y \in Y$, the ray $r_{0 \to y}$ is unbounded and lies in $Y$, and so it must intersect $S(0,h)$. Similarly boundedness of $\Delta(0, g, r)$ implies that $r_{0 \to y}$ must intersect $P(0,g, r)$. Both $\langle g, \cdot \rangle$ and $H$ increase strictly along $r_{0 \to y}$, and so $r_{0 \to y}$ intersects each of $P(0,g, r)$ and $S(0,h)$ exactly once. Both $P(0,g, r)$ and $S(0,h)$ are compact, and so by Lemma~\ref{twosets}, they are homeomorphic. By Lemma~\ref{cc1}, $P(0, g, r)$ is an $n-1$ dimensional ball, and thus so is $S(0,h)$. As $S(0,h)$ is also forward invariant, by well known results it contains an equilibrium (see for example Thm~12, p197 in \cite{spanier}). 
\end{proof}\\

Lemmas~\ref{lemcdash}~to~\ref{mainorderlemma} below are all leading towards Lemma~\ref{mainhomeolem}. For all of these lemmas, we fix some $c \in Y$ and some constants $k_1$, $k_2$. There are two cases which will be referred to as {\bf Case 1} and {\bf Case 2}:\\
{\bf Case 1.} $\langle g, c\rangle < k_1 < k_2$,\\
{\bf Case 2.} $c \not = 0$ and $0\leq k_2 < k_1 < \langle g, c\rangle$.

To shorten notation, define $P_1 \equiv P(c, g, k_1)$, $P_2 \equiv P(c, g, k_2)$, $\Delta_1 \equiv \Delta(c, g, k_1)$, $\Delta_2 \equiv \Delta(c, g, k_2)$. Lemmas~\ref{smalldelta}~and~\ref{smalldelta1} tell us that:
\begin{enumerate}
\item $\Delta_1$ and $\Delta_2 \supset \Delta_1$ are nonempty, compact, convex regions. 
\item In Case 1, $\mathrm{ri}(P_1) = P_1 \cap \mathrm{int}(c^{+})$ and $\mathrm{relbd}(P_1) = P_1 \cap \partial c^{+}$, while in Case 2, $\mathrm{ri}(P_1) = P_1 \cap \mathrm{ri}(c^{-})$ and $\mathrm{relbd}(P_1) = P_1 \cap \mathrm{relbd}(c^{-})$. Provided that $k_2 > 0$, $\mathrm{relbd}(P_2)$ and $\mathrm{ri}(P_2)$ are similarly characterised. 
\item In Case 1, $\partial \Delta_1$ is the disjoint union of $\Delta_1 \cap \partial c^{+}$ and $P_1 \cap \mathrm{int}(c^{+})$, while in Case 2, $\mathrm{relbd}(\Delta_1)$ is the disjoint union of $\Delta_1 \cap \mathrm{relbd}(c^{-})$ and $P_1 \cap \mathrm{ri}(c^{-})$. Similar statements apply to $\mathrm{relbd}(\Delta_2)$ (when $k_2 = 0$, $P_2 \cap \mathrm{ri}(c^{-})$ is empty, and $\mathrm{relbd}(\Delta_2) = \Delta_2 \cap \mathrm{relbd}(c^{-})$).
\end{enumerate}

Define
\[
\Theta \equiv \left\{ \begin{array}{ll}\inf_{x \in P_1, y \in \Delta_2}\langle x-c, \nabla H(y) \rangle, & \mbox{Case 1}\\
\inf_{x \in P_1, y \in \Delta_2}\langle c-x, \nabla H(y) \rangle, & \mbox{Case 2}\,.\end{array} \right .
\]
Since for each $y \in Y$, $\nabla H(y) \in \mathrm{int}(K^*)$, we know that for any $x \in (c+K)\backslash\{c\}$, $\langle x-c, \nabla H(y)\rangle > 0$, and for each $x \in (c-K)\backslash\{c\}$, $\langle c-x, \nabla H(y)\rangle > 0$. Thus in each case, since $P_1$ and $\Delta_2$ are compact, $\Theta > 0$.

\vspace{0.25cm}
\begin{lemma}
\label{lemcdash}
There exists $s_0 \in \mathrm{ri}(\Delta_1)$ such that \\
{\bf Case 1.} $\langle s_0 - c, \nabla H(y) \rangle < \Theta$ for all $y \in \Delta_2$, and for any $x \in P_1$, $H$ increases strictly along $r_{s_0 \to x}$ within $\Delta_2$.\\
{\bf Case 2.} $\langle c - s_0, \nabla H(y) \rangle < \Theta$ for all $y \in \Delta_2$, and for any $x \in P_1$, $H$ decreases strictly along $r_{s_0 \to x}$ within $\Delta_2$.
\end{lemma}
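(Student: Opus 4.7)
The plan is to choose $s_0 \in \mathrm{ri}(\Delta_1)$ close enough to $c$ so that the ``vertical'' displacement $\langle s_0 - c, \nabla H(y)\rangle$ (Case~1) or $\langle c - s_0, \nabla H(y)\rangle$ (Case~2) is dominated, uniformly in $y \in \Delta_2$, by the fixed positive gap $\Theta$. I would begin by fixing an auxiliary point $\bar{s} \in \mathrm{ri}(\Delta_1)$, which exists by Lemmas~\ref{smalldelta}~and~\ref{smalldelta1}. Since $c \in \Delta_1$ and $\bar{s} \in \mathrm{ri}(\Delta_1)$, the standard convex analysis fact that a segment from a point of a convex set to a point of its relative interior lies in the relative interior (apart from the former endpoint) yields $s_\lambda \equiv (1-\lambda)c + \lambda \bar{s} \in \mathrm{ri}(\Delta_1)$ for every $\lambda \in (0, 1]$.

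For the first clause in Case~1, linearity gives
\[
\langle s_\lambda - c, \nabla H(y) \rangle = \lambda \langle \bar{s} - c, \nabla H(y) \rangle,
\]
and $C \equiv \sup_{y \in \Delta_2} \langle \bar{s} - c, \nabla H(y) \rangle$ is finite by compactness of $\Delta_2$ and strictly positive because $\bar{s} - c \in \mathrm{int}(K)$ while $\nabla H(y) \in \mathrm{int}(K^*)$ for every $y \in \Delta_2$. Taking any $\lambda < \Theta/C$ and setting $s_0 \equiv s_\lambda$ yields the required bound. Case~2 is entirely symmetric using $\langle c - s_\lambda, \nabla H(y)\rangle = \lambda \langle c - \bar{s}, \nabla H(y)\rangle$, once one notes that $c - \bar{s} \in K \setminus \{0\}$ since $\bar{s} < c$.

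For the monotonicity clause, I would parametrize $r_{s_0 \to x}$ as $\gamma(t) = s_0 + t(x - s_0)$, $t \geq 0$, and split $x - s_0 = (x - c) - (s_0 - c)$ to obtain, at any $t$ with $\gamma(t) \in \Delta_2$,
\[
\tfrac{d}{dt} H(\gamma(t)) = \langle \nabla H(\gamma(t)), x - c \rangle - \langle \nabla H(\gamma(t)), s_0 - c \rangle.
\]
In Case~1 the first term is at least $\Theta$ by the definition of $\Theta$ (since $x \in P_1$ and $\gamma(t) \in \Delta_2$), and the second is strictly less than $\Theta$ by the first clause, so the derivative is strictly positive. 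Case~2 follows by writing $x - s_0 = -(c - x) + (c - s_0)$ and using the analogous bounds to get a strictly negative derivative. I expect the only mildly delicate point to be that in Case~2 the set $c^-$ may have deficient dimension, so the target is $\mathrm{ri}(\Delta_1)$ rather than $\mathrm{int}(\Delta_1)$; anchoring through $\bar{s} \in \mathrm{ri}(\Delta_1)$ sidesteps this cleanly and uniformly across the two cases.
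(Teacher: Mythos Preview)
Your argument is correct and follows essentially the same route as the paper: choose $s_0 \in \mathrm{ri}(\Delta_1)$ sufficiently close to $c$ so that $\langle s_0-c,\nabla H(y)\rangle$ (resp.\ $\langle c-s_0,\nabla H(y)\rangle$) is uniformly below $\Theta$ on $\Delta_2$, then split $x-s_0=(x-c)-(s_0-c)$ and use the definition of $\Theta$ together with the first clause to obtain strict monotonicity of $H$ along $r_{s_0\to x}$. The only difference is cosmetic: the paper constructs $s_0$ directly by picking a small vector in $\mathrm{int}(Y)$ (Case~1) or in $\mathrm{ri}(c^-)$ near $c$ (Case~2) and bounding via Cauchy--Schwarz with $\sup_{y\in\Delta_2}|\nabla H(y)|$, whereas you interpolate $s_\lambda=(1-\lambda)c+\lambda\bar s$ toward a fixed $\bar s\in\mathrm{ri}(\Delta_1)$ and scale $\lambda$; your version has the mild advantage of treating both cases with a single construction.
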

\begin{proof}
Define $\nabla H_{max} \equiv \sup_{y \in \Delta_2}|\nabla H(y)|$. Since $\nabla H(y) \not = 0$, and moreover $\nabla H(y)$ is continuous and $\Delta_2$ compact, $0 < \nabla H_{max} < \infty$. Choose $\epsilon > 0$ such that
\[
\epsilon < \min\{\Theta/\nabla H_{max}, |k_1 - \langle g, c\rangle|\}\,.
\]
Given $x \in P_1$, $y, z \in \Delta_2$, and $t_1 < t_2$ such that $x_1 \equiv z + t_1(x-z)$ and $x_2 \equiv z + t_2(x-z)$ lie in $\Delta_2$,
\begin{equation}
\label{eqinc}
\langle x_2 - x_1, \nabla H(y) \rangle = (t_2-t_1)\left(\langle x-c, \nabla H(y) \rangle - \langle z - c, \nabla H(y) \rangle \right).
\end{equation}
{\bf Case 1.} Choose any $y^{'} \in \mathrm{int}(Y)$ such that $|y^{'}| = \epsilon$, and set $s_0 = c + y^{'}$ so that $|s_0 - c| = \epsilon$. By {\bf O1}, $s_0 \in \mathrm{int}(Y)$; $s_0 \in c + \mathrm{int}(K)$ since $y^{'} \in \mathrm{int}(K)$; and $\langle g, s_0\rangle \leq \langle g, c\rangle + \epsilon < k_1$. So $s_0 \in \mathrm{int}(\Delta_1)$. In addition, for any $y \in \Delta_2$, 
\[
\langle s_0 - c,  \nabla H(y) \rangle \leq \epsilon\,|\nabla H(y)| < \Theta\,.
\]
Setting $z = s_0$ in (\ref{eqinc}), $\langle x_2 - x_1, \nabla H(y) \rangle > 0$.\\
{\bf Case 2.} Choose $s_0 \in \mathrm{ri}(c^{-})$ so that $|s_0 - c| \leq \epsilon$. As 
\[
\langle g, s_0\rangle = \langle g, c\rangle - \langle g, c - s_0\rangle  \geq \langle g, c\rangle - \epsilon > k_1,
\]
so $s_0 \in \mathrm{ri}(\Delta_1)$. In addition, for any $y \in \Delta_2$, 
\[
\langle c-s_0,  \nabla H(y) \rangle \leq \epsilon\,|\nabla H(y)| < \Theta\,.
\]
Setting $z = s_0$ in (\ref{eqinc}), $\langle x_2 - x_1, \nabla H(y) \rangle < 0$.

This completes the proof.
\end{proof}\\

\begin{lemma}
\label{interiorray}
With $s_0$ defined as in Lemma~\ref{lemcdash}:\\
{\bf Case 1.} $r_{s_0 \to 2s_0}$ intersects both $\mathrm{ri}(P_1)$ and $\mathrm{ri}(P_2)$.\\
{\bf Case 2.} $r_{s_0 \to 0}$ intersects both $\mathrm{ri}(P_1)$ and $\mathrm{ri}(P_2)$. \\
In each case, the points of intersection are unique. 
\end{lemma}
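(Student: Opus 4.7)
The plan is to parametrize each ray by a single scalar and reduce intersection with $P(g, k_i)$ to a linear equation; the content of Lemma~\ref{lemcdash} will then force the resulting parameter values to lie on the ray and the intersection points to be in the relative interior.

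In \textbf{Case 1}, I first note that $r_{s_0 \to 2 s_0}$ is exactly the set $\{\lambda s_0 : \lambda \geq 1\}$. Imposing $\langle g, \lambda s_0 \rangle = k_i$ yields a unique $\lambda_i = k_i / \langle g, s_0 \rangle$, and since $\langle g, s_0 \rangle < k_1 < k_2$ by Lemma~\ref{lemcdash}, both $\lambda_1 < \lambda_2$ exceed $1$, hence lie on the ray; uniqueness follows because $\lambda \mapsto \lambda \langle g, s_0 \rangle$ is strictly increasing. It remains to verify $\lambda_i s_0 \in \mathrm{int}(c^{+})$, for then by Lemma~\ref{smalldelta} we get $\lambda_i s_0 \in P_i \cap \mathrm{int}(c^{+}) = \mathrm{ri}(P_i)$. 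Write $\lambda_i s_0 - c = (s_0 - c) + (\lambda_i - 1) s_0$. Since $s_0 \in \mathrm{int}(\Delta_1) \subset \mathrm{int}(c^{+})$ gives $s_0 - c \in \mathrm{int}(K)$, while $(\lambda_i - 1) s_0 \in K$ (using $s_0 \in Y \subset K$ and $\lambda_i > 1$), property \textbf{O1} delivers $\lambda_i s_0 - c \in \mathrm{int}(K)$ as required.

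In \textbf{Case 2}, the ray $r_{s_0 \to 0}$ traces $\{(1-t) s_0 : t \geq 0\}$, which for $t \in [0,1]$ equals $\{\lambda s_0 : 0 \leq \lambda \leq 1\}$. The condition $\langle g, \lambda s_0 \rangle = k_i$ again gives $\lambda_i = k_i / \langle g, s_0 \rangle$; by Lemma~\ref{lemcdash} we have $\langle g, s_0 \rangle > k_1 > k_2 \geq 0$, so $0 \leq \lambda_2 < \lambda_1 < 1$, placing both points on the ray and uniquely, by the same linearity argument. To identify $\lambda_i s_0 \in \mathrm{ri}(P_i)$, when $k_i > 0$ Lemma~\ref{smalldelta1} gives $\mathrm{ri}(P_i) = P_i \cap \mathrm{ri}(c^{-})$; and since $0 \in c^{-}$ and $s_0 \in \mathrm{ri}(c^{-})$, the convex combination $\lambda_i s_0 = (1 - \lambda_i) \cdot 0 + \lambda_i \cdot s_0$ lies in $\mathrm{ri}(c^{-})$ for $\lambda_i \in (0,1]$ by the standard line-segment property of the relative interior. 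The only borderline case is $k_2 = 0$, forcing $\lambda_2 = 0$; here $P_2 = P(g, 0) \cap c^{-} = \{0\}$ (since $g \in \mathrm{int}(K^*)$ implies $\langle g, y\rangle > 0$ for all $y \in K \setminus \{0\}$), so $\mathrm{ri}(P_2) = \{0\}$ and the intersection point $0$ trivially lies in it.

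The main obstacle, such as it is, is just the bookkeeping around the possibility $k_2 = 0$ in Case 2, which collapses $P_2$ to the origin; everything else reduces cleanly to a one-parameter linear problem once the rays are written as positive scalings of $s_0$, and the geometric content is entirely carried by the choice of $s_0$ made in Lemma~\ref{lemcdash}.
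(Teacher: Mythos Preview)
Your proof is correct and follows essentially the same approach as the paper: parametrize the ray as scalar multiples of $s_0$, locate the intersection with each hyperplane $P(g,k_i)$, and verify the resulting point lies in $\mathrm{int}(c^{+})$ (Case 1) or $\mathrm{ri}(c^{-})$ (Case 2). The only cosmetic differences are that the paper uses the decomposition $ts_0 - c = (t-1)c + t(s_0-c)$ rather than your $(s_0-c) + (\lambda-1)s_0$, and argues existence via an exit argument rather than solving explicitly; you should perhaps note in Case 1 that $\lambda_i s_0 \in \mathrm{int}(Y)$ (needed for $\mathrm{int}(c^{+}) = \mathrm{int}(Y)\cap(c+\mathrm{int}(K))$), but this is immediate since $Y$ is a cone and $s_0 \in \mathrm{int}(Y)$.
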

\begin{proof}
{\bf Case 1.} Since $s_0 \in \mathrm{int}(Y)$, $ts_0 \in \mathrm{int}(Y)$ for all $t > 0$. For $t \geq 1$, $ts_0 = c + (t-1)c + t(s_0- c)  \in c + \mathrm{int}(K)$, since $(t-1)c \in K$ and $s_0- c \in \mathrm{int}(K)$. So $r_{s_0 \to 2s_0} \subset \mathrm{int}(c^{+})$. As $r_{s_0 \to 2s_0}$ is unbounded, and hence leaves $\Delta_1$, there must exist some $t_1 > 1$ such that $t_1s_0 \in \mathrm{ri}(P_1)$. Applying a similar argument to $\Delta_2$, there must be some $t_2 > t_1$ such that $t_2s_0 \in \mathrm{ri}(P_2)$. Uniqueness of the point of intersection follows from the fact that $\langle g, \cdot \rangle$ increases strictly along $r_{s_0 \to 2s_0}$.

{\bf Case 2.} $r_{s_0 \to 0}$ exits $c^{-}$ at $0$. Since $0 \not \in\Delta_1$, there must exist some $0 < t_1 < 1$ such that $t_1s_0 \in \mathrm{ri}(P_1)$. If $k_2 > 0$, applying a similar argument to $\Delta_2$, there must be some $0 < t_2 < t_1$ such that $t_2s_0 \in \mathrm{ri}(P_2)$. If $k_2 = 0$, then $P_2 = \mathrm{ri}(P_2) = \{0\}$ and $r_{s_0 \to 0}$ intersects $P_2$ at this point. Uniqueness follows since $\langle g, \cdot \rangle$ decreases strictly along $r_{s_0 \to 0}$.
\end{proof}\\

From now on, given an arbitrary but fixed $c \in Y$, $s_0$ will refer to some point defined as in Lemma~\ref{lemcdash}, and following Lemma~\ref{interiorray}, we define $s_1= r_{0 \to s_0} \cap P_1$, $s_2= r_{0 \to s_0} \cap P_2$, and $t_1, t_2$ by $s_1 = t_1s_0$ and $s_2= t_2s_0$. These definitions are illustrated schematically in Figure~\ref{basic2}.

\begin{figure}[h]
\begin{minipage}[h]{0.4\textwidth}
\begin{center}
\includegraphics[width=0.9\textwidth]{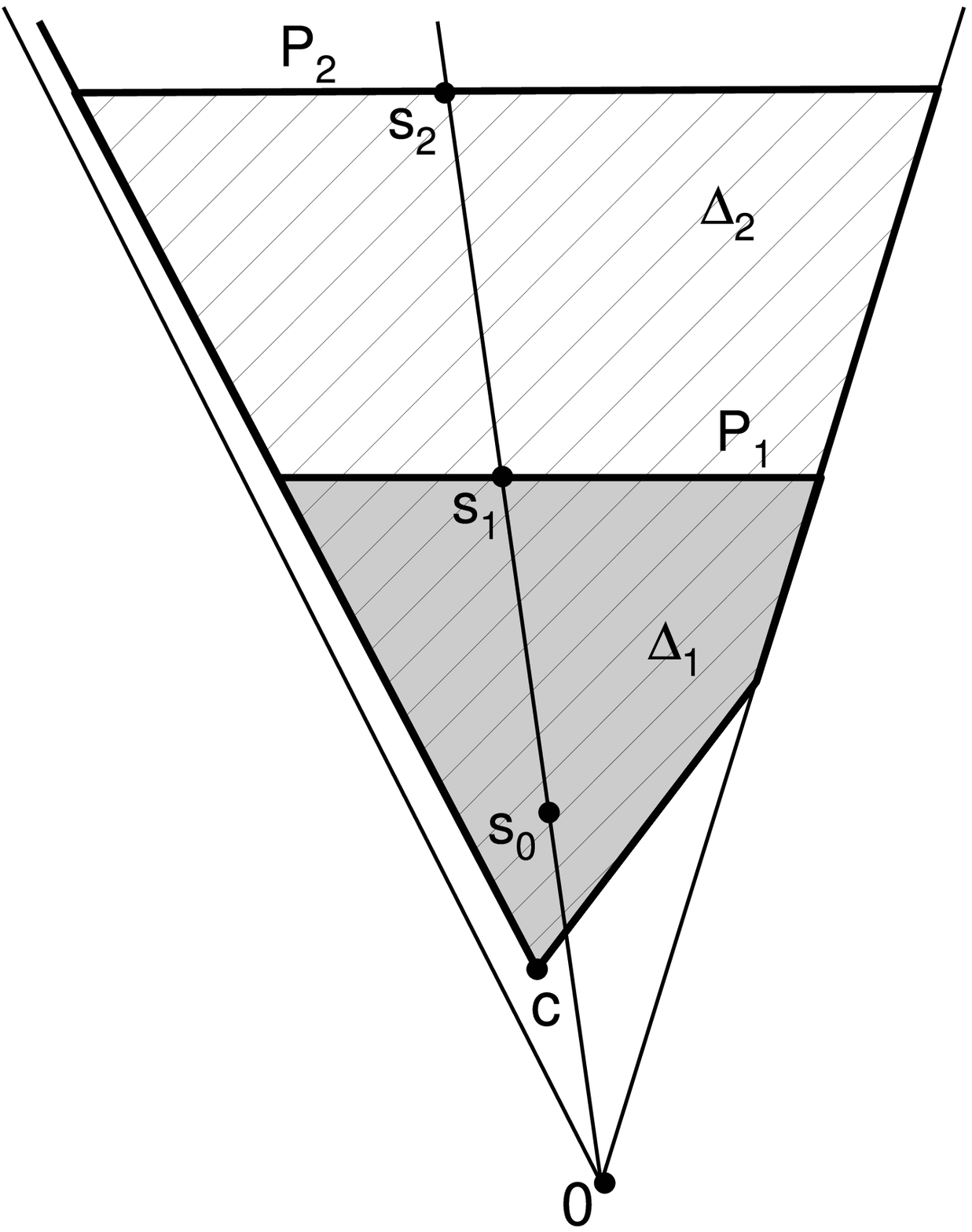}
\end{center}
\end{minipage}
\hfill
\begin{minipage}[h]{0.4\textwidth}
\begin{center}
\includegraphics[width=0.9\textwidth]{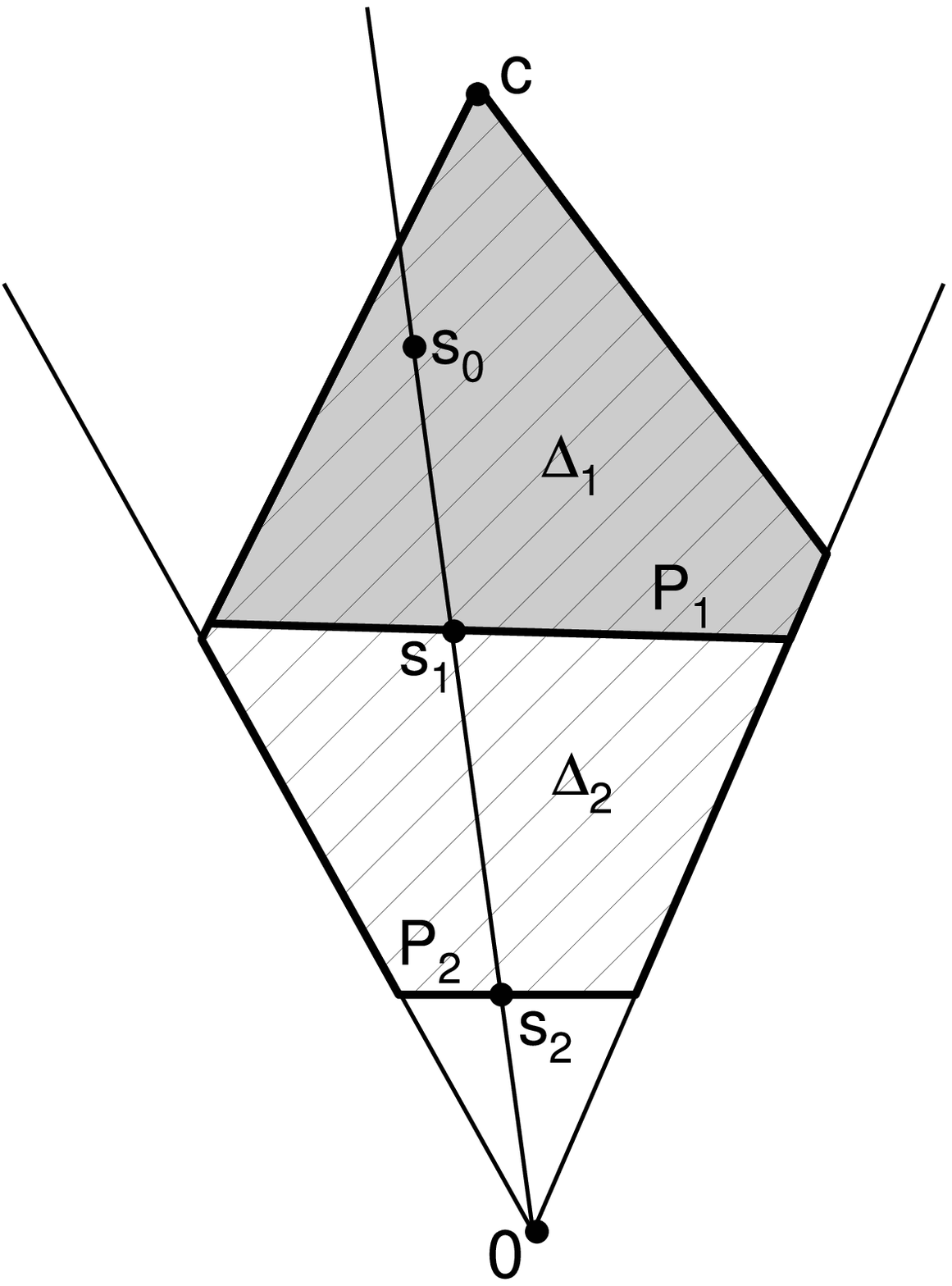}
\end{center}
\end{minipage}
\caption{\label{basic2}The ray $r_{0 \to s_0}$ intersects both $P_1$ and $P_2$ at points $s_1$ and $s_2$ respectively. {\em Left.} The situation when $s_0  \in c + K$. {\em Right.} The situation when $s_0 \in c - K$.}
\end{figure}

\begin{lemma}
\label{D2lem}
Assume $k_2 > 0$. Given any nonzero $\delta \in T(c,g,k_2)$, there exists $t(\delta)>0$ such that i) for $t \in [0, t(\delta))$, $r_{s_0 \to (s_1+t\delta)}$ intersects $\mathrm{ri}(P_2)$; ii) $r_{s_0 \to (s_1+t(\delta)\delta)}$ intersects $\mathrm{relbd}(P_2)$; iii) for $t > t(\delta)$, $r_{s_0 \to (s_1+t\delta)}$ does not intersect $P_2$.
\end{lemma}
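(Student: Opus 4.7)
The plan is to parameterize the family of rays $r_{s_0\to(s_1+t\delta)}$, identify the intersection of each such ray with the hyperplane $P(g,k_2)$ as a single point moving linearly within $\mathrm{Aff}(P_2)$, and then read the three claims off from a direct application of {\bf O2} to the compact convex set $P_2$.

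First, since $\delta \in T(c,g,k_2) \subset g^\perp$, we have $\langle g, s_1+t\delta\rangle = k_1$ for every $t\geq 0$. A generic point on $r_{s_0\to(s_1+t\delta)}$ has the form $q_\lambda(t) = s_0 + \lambda((s_1+t\delta)-s_0)$, and the condition $\langle g, q_\lambda(t)\rangle = k_2$ yields
\[
\lambda^* = \frac{k_2 - \langle g, s_0\rangle}{k_1 - \langle g, s_0\rangle},
\]
a value independent of $t$. A quick sign check, using $\langle g, s_0\rangle < k_1 < k_2$ in Case~1 and $k_2 < k_1 < \langle g, s_0\rangle$ in Case~2 (where $k_2>0$), shows $\lambda^* > 1 > 0$, so the intersection is attained on the ray (not merely its containing line). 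At $t=0$ this recovers $s_0 + \lambda^*(s_1-s_0) = s_2$ by Lemma~\ref{interiorray}, and so in general the unique point of intersection of $r_{s_0\to(s_1+t\delta)}$ with $P(g,k_2)$ is $q(t) = s_2 + \lambda^* t\,\delta$.

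Next I would appeal to {\bf O2}. By Lemma~\ref{smalldelta} or Lemma~\ref{smalldelta1} (using $k_2>0$), $P_2$ is nonempty, compact and convex, $s_2 \in \mathrm{ri}(P_2)$, and the linear subspace parallel to $\mathrm{Aff}(P_2)$ is precisely $T(c,g,k_2)$; in particular $\lambda^*\delta$ is a nonzero direction in $\mathrm{Aff}(P_2)$. Applying {\bf O2} to $P_2$ at the base point $s_2$ with direction $\lambda^*\delta$, the boundedness of $P_2$ excludes the possibility that the whole ray $\{s_2 + \tau\lambda^*\delta:\tau\geq 0\}$ lies in $P_2$, so there is a unique $t(\delta)>0$ with $q(t(\delta)) \in \mathrm{relbd}(P_2)$ and $q(t)\notin P_2$ for $t>t(\delta)$, which gives (ii) and (iii).

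Finally, claim (i) follows from the standard convexity fact that the open segment joining a relative interior point of a convex set to any other point of the set lies entirely in the relative interior: for $t\in (0,t(\delta))$ the point $q(t)$ lies on the open segment from $s_2 \in \mathrm{ri}(P_2)$ to $q(t(\delta)) \in P_2$, and so belongs to $\mathrm{ri}(P_2)$; for $t=0$ we have $q(0) = s_2 \in \mathrm{ri}(P_2)$ directly. I do not expect any real obstacle here: the $t$-independence of $\lambda^*$, which is forced by $\delta \in g^\perp$, collapses the geometric question to the behaviour of a one-parameter family of points moving linearly within the fixed compact convex set $P_2$, to which {\bf O2} applies essentially verbatim.
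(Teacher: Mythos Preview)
Your proposal is correct and follows essentially the same approach as the paper: both arguments identify the intersection of $r_{s_0\to(s_1+t\delta)}$ with the hyperplane $P(g,k_2)$ as a point moving linearly in $\mathrm{Aff}(P_2)$ (your scaling factor $\lambda^* = \frac{k_2-\langle g,s_0\rangle}{k_1-\langle g,s_0\rangle}$ is exactly the paper's $(t_2-1)/(t_1-1)$), and then invoke {\bf O2} with base point $s_2\in\mathrm{ri}(P_2)$ to obtain $t(\delta)$. Your write-up is slightly more explicit in justifying claim~(i) via the relative-interior segment property, but the overall structure is identical.
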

\begin{proof}
The assumption on $\delta$ implies that $s_1 + t\delta \in \mathrm{Aff}(P_1)$ for all $t$, and since $k_2 > 0$, $s_2 + t\delta \in \mathrm{Aff}(P_2)$ for all $t$.
A quick calculation reveals that the ray $r_{s_0 \to s_1+t\delta}$ intersects $P(g, k_2)$ (the hyperplane containing $P_2$) at the point $s_2 + t\delta(t_2-1)/(t_1-1)$, i.e. on $r_{s_2 \to s_2 + \delta}$. 
Moreover, $t(t_2-1)/(t_1-1)$ is an increasing function of $t$. By Lemma~\ref{interiorray}, $s_2 \in \mathrm{ri}(P_2)$, and so by {\bf O2}, there is exactly one point where $r_{s_2 \to s_2 + \delta}$ intersects $\mathrm{relbd}(P_2)$, and thus exactly one value $t(\delta)>0$ such that $r_{s_0 \to (s_1+t(\delta)\delta)}$ intersects $\mathrm{relbd}(P_2)$ with $r_{s_0 \to (s_1+t\delta)}$ intersecting $\mathrm{ri}(P_2)$ for $t \in [0, t(\delta))$, and failing to intersect $P_2$ for $t > t(\delta)$.
\end{proof}\\

\begin{lemma}
\label{corD2}
If, for some nonzero $\delta \in T(c,g,k_2)$, and some $t^{'} > 1$, $r_{s_0 \to s_1+\delta}$ and $r_{s_0 \to s_1+t^{'}\delta}$ do not intersect $P_2 \cap \mathrm{int}(c^{+})$ (Case 1) or $P_2 \cap \mathrm{ri}(c^{-})$ (Case 2), then for all $t \in [1, t^{'}]$, $r_{s_0 \to s_1+t\delta}$ do not intersect $P_2 \cap \mathrm{int}(c^{+})$ (Case 1) or $P_2 \cap \mathrm{ri}(c^{-})$ (Case 2).
\end{lemma}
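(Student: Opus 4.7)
\emph{Plan.} The plan is to reduce the statement to a direct application of Lemma~\ref{D2lem}. That lemma (when $k_2 > 0$) produces a unique threshold $t(\delta) > 0$ such that the ray $r_{s_0 \to s_1 + t\delta}$ meets $\mathrm{ri}(P_2)$ precisely for $t \in [0, t(\delta))$, meets $\mathrm{relbd}(P_2)$ exactly when $t = t(\delta)$, and misses $P_2$ altogether for $t > t(\delta)$. By Lemmas~\ref{smalldelta} and~\ref{smalldelta1}, the set named in the conclusion is exactly $\mathrm{ri}(P_2)$: it equals $P_2 \cap \mathrm{int}(c^+)$ in Case 1 and $P_2 \cap \mathrm{ri}(c^-)$ in Case 2 (assuming $k_2 > 0$). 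So it suffices to show that no $t \in [1, t']$ gives a ray meeting $\mathrm{ri}(P_2)$.

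\emph{Principal case.} Assume first $k_2 > 0$. The hypothesis at $t = 1$ says $r_{s_0 \to s_1 + \delta}$ does not meet $\mathrm{ri}(P_2)$, so by Lemma~\ref{D2lem}(i) we must have $t(\delta) \leq 1$. For any $t \in [1, t']$ we then have $t \geq t(\delta)$, so by Lemma~\ref{D2lem} parts (ii) and (iii) the ray $r_{s_0 \to s_1 + t\delta}$ either misses $P_2$ entirely (when $t > t(\delta)$) or meets $P_2$ only on $\mathrm{relbd}(P_2)$ (in the borderline case $t = t(\delta) = 1$). Either way the ray avoids $\mathrm{ri}(P_2)$, as required. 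Observe that the second hypothesis, involving $t'$, is actually not needed: the hypothesis at $t = 1$ alone already forces $t(\delta) \leq 1$.

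\emph{Degenerate subcase.} The only situation left uncovered is Case 2 with $k_2 = 0$, where Lemma~\ref{D2lem} does not apply. Here $P_2 = P(g, 0) \cap c^- = \{0\}$, since any nonzero $y \in Y$ satisfies $\langle g, y \rangle > 0$. Because $c \neq 0$, the set $c^-$ contains the distinct points $0$ and $c$ and so has dimension at least one; moreover $0$ is an extreme point of $c^-$, inherited from pointedness of $Y$ (which forbids a nontrivial representation $0 = (1-\lambda)p + \lambda q$ with $p, q \in Y$ distinct and $\lambda \in (0,1)$). Consequently $0 \in \mathrm{relbd}(c^-)$, giving $P_2 \cap \mathrm{ri}(c^-) = \emptyset$, and the conclusion holds vacuously. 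The content of the proof is carried entirely by Lemma~\ref{D2lem}; the only ``obstacle'' is remembering to handle the degenerate $k_2 = 0$ possibility separately with the short ad hoc argument just given.
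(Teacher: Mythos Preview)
Your proof is correct and follows essentially the same route as the paper's: both handle the degenerate $k_2 = 0$ case in Case~2 by noting $P_2 \cap \mathrm{ri}(c^-)$ is empty, and both reduce the main case to Lemma~\ref{D2lem} after identifying the target set with $\mathrm{ri}(P_2)$ via Lemmas~\ref{smalldelta} and~\ref{smalldelta1}. The paper phrases the core step as a contradiction (if some $t_0 \in [1,t']$ hit $\mathrm{ri}(P_2)$ then so would $t=1$), while you phrase it directly via the threshold $t(\delta) \leq 1$; these are the same argument, and your observation that the hypothesis at $t'$ is actually superfluous is a valid sharpening.
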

\begin{proof}
When $k_2 = 0$, then $P_2 \cap \mathrm{ri}(c^{-})$ is empty and the result is immediate. Assume $k_2 > 0$, in which case, in Case 1, $P_2 \cap \mathrm{int}(c^{+}) = \mathrm{ri}(P_2)$, while in Case 2, $P_2 \cap \mathrm{ri}(c^{-}) = \mathrm{ri}(P_2)$. If there is some $t_0 \in [1, t^{'}]$ such that $r_{s_0 \to s_1+t_0\delta}$ intersects $\mathrm{ri}(P_2)$, then by Lemma~\ref{D2lem}, for $t \in [0, t_0]$, $r_{s_0 \to s_1+t\delta}$ must intersect $\mathrm{ri}(P_2)$, contradicting the fact that $r_{s_0 \to s_1+\delta}$ does not intersect $\mathrm{ri}(P_2)$.
\end{proof}\\

\begin{lemma}
\label{case2}
Let $0 \not = \delta \in g^\perp$, $t > 1$, $y \in Y$ and $s = ky$ for some $k \in (0, 1) \cup (1, \infty)$. Suppose $r_{y \to (s+\delta)}$ and $r_{y \to (s+t\delta)}$ both exit $Y$ at points $p_1 \not = y$ and $p_2 \not = y$ respectively. Then $p_2 = qp_1$, where if $k > 1$, then $0<q < 1$, and if $k < 1$, then $q > 1$.
\end{lemma}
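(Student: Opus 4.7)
The plan is to reduce the situation to a $2$-dimensional cross-section and then pin down the exit points by convex-cone geometry. Observe first that $y \neq 0$ (otherwise $s = ky$ does not single out a value of $k$), and since $g \in \mathrm{int}(K^*)$ while $\delta \in g^\perp$, $\langle g, y \rangle > 0$ whereas $\langle g, \delta \rangle = 0$, so $y$ and $\delta$ are linearly independent. Set $\Pi \equiv \mathrm{span}(y, \delta)$ and $Y_0 \equiv Y \cap \Pi$; then $Y_0$ is a closed convex pointed cone in $\Pi$ (pointedness inherited from $Y$), and since both rays lie in $\Pi$, exiting $Y$ is the same as exiting $Y_0$.

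Using the basis $\{y,\delta\}$ on $\Pi$, in which $y$ has coordinates $(1,0)$, direct substitution shows that
\[
p_1 = (1 + \lambda_1(k-1),\, \lambda_1), \qquad p_2 = (1 + \mu_1(k-1),\, \mu_1 t),
\]
for uniquely determined $\lambda_1, \mu_1 > 0$; in particular both $p_1, p_2$ lie in $\mathrm{relbd}(Y_0)$ and both have strictly positive second coordinate.

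The main step is to prove that $p_1$ and $p_2$ lie on a common ray from the origin. Since $p_1 \neq y$, the ray from $y$ in direction $(k-1,1)$ cannot leave $Y_0$ immediately, which forces $Y_0$ to be genuinely $2$-dimensional in $\Pi$; being pointed, it is then a proper wedge bounded by two extreme rays from the origin with directions $v_1, v_2$. At most one of $v_1, v_2$ can have strictly positive second coordinate, for if both did, the representation $y = \alpha v_1 + \beta v_2$ with $\alpha, \beta \geq 0$ would force $y_2 > 0$, contradicting $y_2 = 0$. Hence the subset of $\mathrm{relbd}(Y_0)$ with strictly positive second coordinate is a single open ray from the origin, and both $p_1$ and $p_2$ lie on it, so $p_2 = qp_1$ for a unique $q > 0$.

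Matching coordinates in $p_2 = qp_1$ then gives $q\lambda_1 = \mu_1 t$ and $q(1 + \lambda_1(k-1)) = 1 + \mu_1(k-1)$; eliminating $\mu_1$ yields
\[
q = \frac{t}{t + \lambda_1(k-1)(t-1)}.
\]
Since $t > 1$ and $\lambda_1 > 0$, the correction $\lambda_1(k-1)(t-1)$ has the same sign as $k-1$, so the denominator exceeds $t$ when $k > 1$ (giving $q < 1$), and is strictly less than $t$ when $k < 1$; in the latter case the positivity of $q$ (already established geometrically) forces the denominator to remain positive, so $q > 1$. I expect the convex-geometric step identifying the common boundary ray to be the main obstacle; once that is in place, the sign analysis is a one-line calculation.
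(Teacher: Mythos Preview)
Your proof is correct and follows the same overall plan as the paper---reduce to the two-dimensional slice $Y' = Y \cap \mathrm{span}(y,\delta)$, parametrise the exit points, and obtain the explicit formula $q = t/(t + \lambda_1(k-1)(t-1))$---but the logical order is reversed. You first argue geometrically that $p_1$ and $p_2$ must lie on the same extreme ray of the wedge $Y'$ (since $y$ has zero $\delta$-coordinate, only one boundary ray can have positive $\delta$-coordinate), conclude $p_2 = qp_1$ with $q>0$, and then read off the formula and its sign. The paper instead \emph{defines} the candidate $q$ and $\lambda_2'$ algebraically, checks that $p_2' \equiv y + \lambda_2'(s-y+t\delta) = qp_1$ lies on $\mathrm{relbd}(Y')$, and invokes uniqueness of the exit point (property~\textbf{O2}) to identify $p_2' = p_2$. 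The practical difference shows up in the case $k<1$: the paper bounds $\lambda_1 < 1/(1-k)$ directly from $\delta/(1-k) \notin Y$ to force the denominator positive, whereas you get positivity of the denominator for free from the geometric fact $q>0$. Both arguments are sound; yours is slightly more conceptual, the paper's slightly more computational.
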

\begin{proof}
Consider the 2D affine subspace spanned by $y$ and $\delta$, and let $Y^{'}$ be the intersection of this subspace with $Y$. $Y^{'}$ is itself a closed, convex and pointed 2D cone, 
and $y, s, p_1, p_2 \in Y^{'}$. Define $\lambda_1,\lambda_2 > 0$ by
\[
p_1 = y + \lambda_1(s-y + \delta),\quad p_2 = y + \lambda_2(s-y + t\delta).
\]
By assumption, $p_1, p_2 \in \mathrm{relbd}(Y^{'})$. Further, define $\lambda_2^{'}$ and $q$ by
\[
\lambda_2^{'} = \frac{\lambda_1}{\lambda_1(k-1)(t-1) + t},\quad q = \frac{t}{\lambda_1(k-1)(t-1) + t}.
\]
By observation, if $k > 1$, then $\lambda_2^{'} > 0$ and $0<q < 1$. If $k\in (0, 1)$, then
\[
y + \frac{1}{1-k}(s-y + \delta) = y + \frac{1}{1-k}((k-1)y + \delta) = \frac{\delta}{1-k} \not \in Y\,,
\]
so $\lambda_1 < \frac{1}{1-k}$ and $-1 < \lambda_1(k-1) < 0$, and thus $1 < \lambda_1(k-1)(t-1) + t < t$, implying that $\lambda_2^{'} > 0$ and $q > 1$. Further, a quick calculation reveals that $p_2^{'} \equiv y + \lambda_2^{'}(s-y + t\delta) = qp_1$. 
But $p_2^{'} \in Y$ since $p_1$ in $Y$, and $p_2^{'} \in \mathrm{relbd}(Y^{'})$ since $p_1 \in \mathrm{relbd}(Y^{'})$. Moreover, by {\bf O2}, the intersection between $r_{y \to (s + t\delta)}\backslash\{y\}$ and $\mathrm{relbd}(Y^{'})$ is unique, and so $p_2^{'} = p_2$. The two cases are illustrated in Figure~\ref{fig1}. 
\end{proof}\\

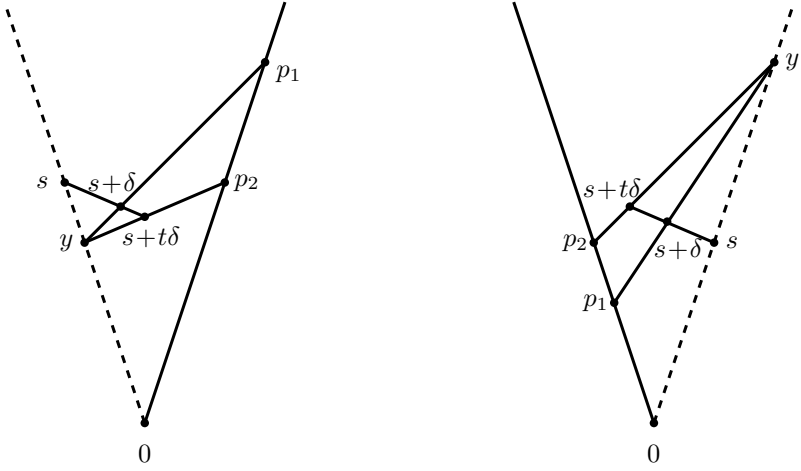
\begin{figure}[h]
\begin{minipage}{0.48\textwidth}
\begin{tikzpicture}[domain=-6:6,scale=0.8]
\path (0,0) coordinate (origin);
\path (2.33, 7) coordinate (xline);
\path (-2.33, 7) coordinate (yline);
\path (-1, 3) coordinate (c);
\path (-1.33, 4) coordinate (tc);
\path (1.33, 4) coordinate (bx);
\path (2, 6) coordinate (bx1);

\path (1, 3) coordinate (cc);

\fill (origin) circle (2pt);
\fill (c) circle (2pt);
\fill (tc) circle (2pt);
\fill (bx) circle (2pt);
\fill (bx1) circle (2pt);
\draw[-, dashed, line width=0.04cm] (origin) -- (yline);
\draw[-, line width=0.04cm] (origin) -- (xline);
\draw[-, line width=0.04cm] (c) -- (bx);
\draw[-, line width=0.04cm] (c) -- (bx1);
\draw[-, line width=0.04cm] (tc) -- (0, 3.43);

\fill (-0.4, 3.6) circle (2pt);
\fill (0, 3.43) circle (2pt);

\node at (0, -0.5) {$0$};
\node at (-1.3, 3) {$y$};
\node at (-1.7, 4) {$s$};
\node at (1.7, 4) {$p_2$};
\node at (2.4, 5.8) {$p_1$};
\node at (-0.55, 4.0) {$s\!+\!\delta$};
\node at (0.1, 3.1) {$s\!+\!t\delta$};
\end{tikzpicture}
\end{minipage}
\hfill
\begin{minipage}{0.48\textwidth}
\begin{tikzpicture}[domain=-6:6,scale=0.8]
\path (0,0) coordinate (origin);
\path (2.33, 7) coordinate (xline);
\path (-2.33, 7) coordinate (yline);
\path (-1, 3) coordinate (c);
\path (-0.66, 2) coordinate (cc1);
\path (2, 6) coordinate (bx1);

\path (1, 3) coordinate (s);

\fill (origin) circle (2pt);
\fill (c) circle (2pt);
\fill (cc1) circle (2pt);

\fill (bx1) circle (2pt);
\fill (s) circle (2pt);
\draw[-, line width=0.04cm] (origin) -- (yline);
\draw[-, dashed, line width=0.04cm] (origin) -- (xline);
\draw[-, line width=0.04cm] (bx1) -- (c);
\draw[-, line width=0.04cm] (bx1) -- (cc1);
\draw[-, line width=0.04cm] (1, 3) -- (-0.4, 3.6);

\fill (-0.4, 3.6) circle (2pt);
\fill (0.22, 3.35) circle (2pt);

\node at (0, -0.5) {$0$};
\node at (-1.3, 3) {$p_2$};
\node at (-0.96, 2) {$p_1$};
\node at (1.3, 3) {$s$};
\node at (2.3, 6) {$y$};
\node at (-0.7, 3.9) {$s\!+\!t\delta$};
\node at (0.4, 2.9) {$s\!+\!\delta$};
\end{tikzpicture}
\end{minipage}
\caption{\label{fig1}An illustration of the two situations in Lemma~\ref{case2}. In both cases, $p_1$ and $p_2$ are collinear with the origin. {\em Left.} $s = ky$ where $k > 1$: in this case, $p_1 > p_2$. {\em Right.} $s = ky$ where $0 < k < 1$: in this case, $p_2 > p_1$.}
\end{figure}

\begin{lemma}
\label{case1}
Consider some $c \in Y$, nonzero $\delta \in g^\perp$ and $t > 1$.\\
{\bf Case 1.} Let $y \in (c + K)\backslash\{c\}$ and let $s = ky$ for some $k > 1$.\\
{\bf Case 2.} Let $y \in (c - K)\backslash\{c\}$ and let $s = ky$ for some $k \in (0, 1)$.\\
Suppose $r_{y \to (s+\delta)}$ and $r_{y \to (s+t\delta)}$ both exit $c+K$ (Case 1) or $c-K$ (Case 2) at points $p_1\not = y$ and $p_2\not = y$ respectively. Then $p_1 - p_2 =  R_0\left((k-1)y + R\delta \right)$, where $R_0 > 0$ and $0 \leq R < 1$. 
\end{lemma}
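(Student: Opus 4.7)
The strategy is to reduce the stated vector identity to one scalar inequality about the two exit-parameters and then establish that inequality by a convexity argument inside the cone $c \pm K$.

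By {\bf O2} together with the hypothesis $p_1, p_2 \neq y$, there are unique $\lambda_1, \lambda_2 > 0$ with $p_1 = y + \lambda_1((k-1)y + \delta)$ and $p_2 = y + \lambda_2((k-1)y + t\delta)$; subtracting,
\[
p_1 - p_2 = (\lambda_1 - \lambda_2)(k-1)\,y + (\lambda_1 - t\lambda_2)\,\delta.
\]
Because $\delta \in g^\perp$ while $y \in K \setminus \{0\}$ forces $\langle g, y \rangle > 0$, the vectors $(k-1)y$ and $\delta$ are linearly independent, so matching the target form $R_0((k-1)y + R\delta)$ uniquely identifies
\[
R_0 = \lambda_1 - \lambda_2, \qquad R = \frac{\lambda_1 - t\lambda_2}{\lambda_1 - \lambda_2}.
\]
The whole lemma therefore reduces to the single scalar claim $\lambda_1 \ge t\lambda_2$: combined with $t > 1$ and $\lambda_2 > 0$ this gives $\lambda_1 > \lambda_2$, hence $R_0 > 0$ and $R \ge 0$, and $R < 1$ is then automatic from $R_0 > 0$, $t > 1$ and $\lambda_2 > 0$.

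To prove $\lambda_1 \ge t\lambda_2$, I would introduce $q_1 := y + t\lambda_2((k-1)y + \delta)$, the point on Ray~1 at parameter $t\lambda_2$. A short calculation gives $q_1 = p_2 + (t-1)\lambda_2(k-1)\,y$. It suffices to show $q_1$ lies in the relevant cone ($c+K$ in Case~1, $c-K$ in Case~2): then by convexity of that cone and $y$ also lying in it, the whole segment from $y$ to $q_1$ --- which is precisely Ray~1 restricted to $[0, t\lambda_2]$ --- stays inside the cone, so Ray~1 cannot have exited before parameter $t\lambda_2$. In Case~1, $(t-1)\lambda_2(k-1) > 0$, and $y = c + (y-c) \in K + K \subseteq K$ from $c \in Y \subseteq K$ and $y - c \in K$; so $q_1 - c = (p_2 - c) + (t-1)\lambda_2(k-1)\,y$ is the sum of two elements of $K$. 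In Case~2 the same identity rewrites as $c - q_1 = (c - p_2) + (t-1)\lambda_2(1-k)\,y$, with $(t-1)\lambda_2(1-k) > 0$, and both summands lie in $K$ provided $y \in K$.

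The main obstacle is the final step in Case~2: unlike in Case~1, the cone membership $y \in c - K$ does not by itself force $y \in K$, and one can construct $y \in c - K$ outside $K$ for which the lemma's conclusion fails (the sign of $R$ flips). The argument therefore genuinely relies on the condition $y \in Y \subseteq K$, which is the implicit standing hypothesis in every application of this lemma in the paper --- for instance whenever $y$ plays the role of the point $s_0$ of Lemma~\ref{lemcdash}, which is chosen in $\mathrm{ri}(\Delta_1) \subset Y$.
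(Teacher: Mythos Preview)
Your argument is essentially identical to the paper's: your auxiliary point $q_1$ is exactly the point the paper calls $z$, and the scalar inequality $\lambda_1 \ge t\lambda_2$ is precisely what the paper extracts from $z \in c \pm K$ via {\bf O2}. Your formula $R = (\lambda_1 - t\lambda_2)/(\lambda_1 - \lambda_2)$ agrees with the paper's $R = 1 - \lambda_2(t-1)/(\lambda_1 - \lambda_2)$.

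You are also right to flag the Case~2 issue. The paper's own proof writes ``$-(k-1)y \in K$'' without comment, which --- just as you observe --- requires $y \in K$, a fact not implied by $y \in (c-K)\setminus\{c\}$ alone. The hypothesis $y \in Y \subset K$ is indeed what is being used, and it holds in every application of the lemma (where $y$ is the point $s_0 \in \mathrm{ri}(\Delta_1) \subset Y$). So your diagnosis of the hidden assumption is accurate and matches what the paper does implicitly.
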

\begin{proof}
Note that $s-y = (k - 1)y$, and define $\lambda_1, \lambda_2 > 0$ by
\[
p_1 = y + \lambda_1((k - 1)y + \delta),\quad p_2 = y + \lambda_2((k - 1)y + t\delta).
\]
$p_1, p_2 \not = 0$ since $\delta \not = 0$. Consider the point
\[
z \equiv y+ \lambda_2t((k - 1)y + \delta) = p_2 + \lambda_2(t-1)(k - 1)y.
\]
In Case 1, as $p_2 \in c + K$ and $(k - 1)y \in K$, $z \in c + K$. In Case 2, as $p_2 \in c - K$ and $-(k - 1)y \in K$, $z \in c - K$. In each case, this implies, by {\bf O2}, that $\lambda_1 \geq \lambda_2t > \lambda_2$, from which we get $0 < \frac{\lambda_2 (t-1)}{\lambda_1 - \lambda_2} \leq 1$. Define $R_0 \equiv \lambda_1 - \lambda_2 > 0$ and $R \equiv 1-\frac{\lambda_2 (t-1)}{\lambda_1 - \lambda_2}$, and note that $0\leq R<1$. Now we calculate $p_1-p_2$:
\[
p_1 - p_2 =  (\lambda_1-\lambda_2)\left((k - 1)y + \delta - \frac{\lambda_2 (t-1)}{(\lambda_1-\lambda_2)} \delta\right) =  R_0\left((k - 1)y + R\delta \right).
\]
\end{proof}\\

We now come to a key lemma:

\vspace{0.25cm}
\begin{lemma}
\label{mainorderlemma}
Consider some nonzero $\delta \in T(c,g,k_1)$ and $t_0 > 1$, such that $s_1+\delta$ and $s_1+t_0\delta$ both lie in $P_1$. 
Define $p_1 \equiv \Pi_{s_0, \mathrm{relbd}(\Delta_2)}(s_1+\delta)$ and $p_2 \equiv \Pi_{s_0, \mathrm{relbd}(\Delta_2)}(s_1+t_0\delta)$, and suppose $p_1, p_2 \in \partial c^{+}$ (Case 1) or $p_1, p_2 \in \mathrm{relbd}(c^{-})$ (Case 2). Then in Case 1, $H(p_1) > H(p_2)$, and in Case 2, $H(p_1) < H(p_2)$. 
\end{lemma}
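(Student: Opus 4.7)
My plan is to split according to which piece of $\partial c^+$ (Case~1) or $\mathrm{relbd}(c^-)$ (Case~2) contains each $p_i$---the cone face $(c\pm\partial K)\cap Y$ or the ambient face $\partial Y\cap c^\pm$---and to invoke Lemma~\ref{case1} or Lemma~\ref{case2} accordingly, patching mixed configurations by continuity.

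Suppose first that both $p_i$ lie in the cone face, so the two rays exit $c\pm K$ at $p_1,p_2$. Taking $y=s_0$ and $s=s_1=t_1s_0$ in Lemma~\ref{case1} (so $k=t_1>1$ in Case~1 and $k=t_1\in(0,1)$ in Case~2) gives
\[
p_1-p_2 \;=\; R_0\bigl((t_1-1)s_0+R\delta\bigr),\qquad R_0>0,\ R\in[0,1),
\]
and the key identity
\[
(t_1-1)s_0+R\delta \;=\; (1-R)(t_1-1)s_0 \;+\; R\bigl(s_1+\delta-s_0\bigr)
\]
exhibits the right-hand side as a convex combination of two vectors whose $\nabla H(z)$-inner-product has, for every $z\in\Delta_2$, the sign of $t_1-1$: the first because $s_0\in K\setminus\{0\}$ and $\nabla H(z)\in\mathrm{int}(K^*)$, and the second by Lemma~\ref{lemcdash} applied to $x=s_1+\delta\in P_1$. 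Since $[p_2,p_1]\subset\Delta_2$ by convexity, integrating $\nabla H$ along this segment delivers $H(p_1)>H(p_2)$ in Case~1 and $H(p_1)<H(p_2)$ in Case~2. Suppose next that both $p_i$ lie in the ambient face, so both rays exit $Y$. Lemma~\ref{case2} (same substitution) then yields $p_2=qp_1$ with $q\in(0,1)$ in Case~1 and $q>1$ in Case~2. One checks $p_1\in K\setminus\{0\}$ (in Case~1 from $\langle g,p_1\rangle\ge k_1>0$; in Case~2 because the $\epsilon$ in Lemma~\ref{lemcdash} may be taken small enough that $p_1$ is bounded away from $0$), so $p_1-p_2$ (Case~1) or $p_2-p_1$ (Case~2) is a positive scalar multiple of $p_1$ and hence lies in $K\setminus\{0\}$; integrating $\nabla H$ along $[p_2,p_1]\subset Y$ gives the required inequality.

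For the mixed configuration I would rely on continuity of $e(t)\equiv\Pi_{s_0,\mathrm{relbd}(\Delta_2)}(s_1+t\delta)$ on $[1,t_0]$ (via the translated form of Lemma~\ref{compactX}). By Lemma~\ref{corD2} the image of $e$ avoids $\mathrm{ri}(P_2)$ throughout, so lies entirely in $\partial c^+$ (respectively $\mathrm{relbd}(c^-)$); the preimages of the cone face and of the ambient face are closed subsets of $[1,t_0]$ covering the whole interval, and connectedness produces a $t^*\in[1,t_0]$ at which $e(t^*)$ sits on both faces simultaneously. Applying the first step to $(p_1,e(t^*))$ (with $t^*$ in place of $t_0$) and the second step to $(e(t^*),p_2)$ (after the substitution $\delta\mapsto t^*\delta,\ t\mapsto t_0/t^*$ needed to recast the second pair of rays in the standard form of Lemma~\ref{case2}) and chaining the two resulting strict $H$-inequalities completes the proof. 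I expect this patching step to be the main obstacle: one must verify carefully that the rescaled parameters still satisfy the hypotheses of Lemmas~\ref{case1} and~\ref{case2} and that $t^*$ can be taken strictly inside $(1,t_0)$ whenever the configuration is genuinely mixed.
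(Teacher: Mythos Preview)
Your proposal is correct and follows essentially the same three-case split as the paper: both $p_i$ on the cone face via Lemma~\ref{case1}, both on the ambient face via Lemma~\ref{case2}, and the mixed case patched by connectedness of the continuous image $t\mapsto\Pi_{s_0,\mathrm{relbd}(\Delta_2)}(s_1+t\delta)$ together with Lemma~\ref{corD2}. Two small simplifications the paper makes that you might adopt: in the cone-face case it observes directly that $(t_1-1)s_0+R\delta=(s_1+R\delta)-s_0$ with $s_1+R\delta\in P_1$ by convexity (so Lemma~\ref{lemcdash} applies in one stroke, avoiding your convex-combination identity), and in the ambient-face case $p_1\neq 0$ follows not from shrinking $\epsilon$ but simply because $\delta\in g^\perp\setminus\{0\}$ and $\langle g,s_0\rangle>0$ force the ray to miss the origin.
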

\begin{proof}
Note that $s_1-s_0 = (t_1 - 1)s_0$ and define $\lambda_1, \lambda_2$ by
\[
p_1 = s_0 + \lambda_1((t_1 - 1)s_0 + \delta),\quad p_2 = s_0 + \lambda_2((t_1 - 1)s_0 + t_0\delta).
\]
By convexity of $\Delta_2$, $\lambda_1, \lambda_2 \geq 1$. In Case 2, define $Y_c \equiv Y \cap \mathrm{Aff}(c^{-})$, and $K_c \equiv K \cap \mathrm{Aff}(c^{-})$. The reader is reminded of the remarks following Lemma~\ref{smalldelta1}. 

{\bf Possibility 1.} $p_1, p_2 \in c+\partial K$ (Case 1), or $p_1, p_2 \in c-\mathrm{relbd}(K_c)$ (Case 2). By Lemma~\ref{case1}, $p_1 - p_2 = R_0\left((t_1 - 1)s_0 + R\delta \right)$, where $R_0 > 0$ and $0 \leq R < 1$. So for any $y \in \Delta_2$, $\langle p_1-p_2, \nabla H(y) \rangle = R_0\langle s_1 + R\delta - s_0, \nabla H(y) \rangle$. Since $0\leq R<1$, $s_1 + R\delta \in P_1$. In Case 1, by Lemma~\ref{lemcdash}, $\langle (s_1 + R\delta) - s_0, \nabla H(y) \rangle > 0$, and so $\langle p_1-p_2, \nabla H(y) \rangle > 0$, and consequently $H(p_1) > H(p_2)$. In Case 2, by Lemma~\ref{lemcdash}, $\langle s_0 - (s_1 + R\delta), \nabla H(y) \rangle > 0$, and so $\langle p_1-p_2, \nabla H(y) \rangle < 0$, and consequently $H(p_1) < H(p_2)$.

{\bf Possibility 2.}  $p_1, p_2 \in \partial Y$ (Case 1) or $p_1, p_2 \in \mathrm{relbd}(Y_c)$ (Case 2). In this case, Lemma~\ref{case2} implies that $p_2 = qp_1$, where, in Case 1, $0<q < 1$, and hence $H(p_1) > H(p_2)$, and in Case 2, $q > 1$ and hence $H(p_2) > H(p_1)$.

{\bf Possibility 3.} Assume that neither Possibility 1 nor Possibility 2 holds. Define $b(t) = \Pi_{s_0, \mathrm{relbd}(\Delta_2)}(s_1+t\delta)$. Since $b(1) = p_1$ and $b(t_0) = p_2$ lie in $\partial c^{+}$ (Case 1) or in $\mathrm{relbd}(c^{-})$ (Case 2), by Lemma~\ref{corD2}, for each $t \in [1, t_0]$, $b(t)$ lies in $\partial c^{+}$ (Case 1) or in $\mathrm{relbd}(c^{-})$ (Case 2). Since both $P_1$ and $\mathrm{relbd}(\Delta_2)$ are compact and disjoint from $s_0$, and each ray $r_{s_0 \to (s_1+t\delta)}$ intersects each exactly once, by Lemma~\ref{twosets}, $B \equiv \{b(t)\,|\, t \in [1,t_0]\}$ is homeomorphic to a closed interval, and hence closed and connected. Define $B_1 \equiv B \cap \partial Y$, $B_2 \equiv B \cap (c + \partial K)$ (Case 1), or $B_1 \equiv B \cap \mathrm{relbd}(Y_c)$, $B_2 \equiv B \cap (c - \mathrm{relbd}(K_c))$ (Case 2). Both $B_1$ and $B_2$ are closed nonempty sets, so there must exist $p_3 \in (B_1 \cap B_2)$, (otherwise $B \backslash B_1$ and $B \backslash B_2$ would form a separation of $B$). Note that $p_3 = b(t_3)$ for some $t_3 \in (1, t_0)$. Then, from Possibilities 1 and 2, in Case 1, $H(p_2) < H(p_3) < H(p_1)$ implying that $H(p_2) < H(p_1)$, and in Case 2, $H(p_2) > H(p_3) > H(p_1)$ implying that $H(p_2) > H(p_1)$. 
\end{proof}\\

Define $\tilde H(x)$ on $\Delta_2\backslash\{s_0\}$ by $\tilde H(x) \equiv H(\Pi_{s_0, \mathrm{relbd}(\Delta_2)}(x))$. Since $\Pi_{s_0, \mathrm{relbd}(\Delta_2)}$ is continuous by Lemma~\ref{convexproj}, $\tilde H(x)$ is continuous as the composition of continuous functions. 

\vspace{0.25cm}
\begin{lemma}
\label{mainhomeolem}
Consider any  $c \in Y$. There is some $\epsilon_c > 0$ such that for $h \in [0, H(c) + \epsilon_c)$, $S(c, h)$ is a ball. 
\end{lemma}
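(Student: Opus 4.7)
The plan is to construct a homeomorphism between $S(c,h)$ and a compact subset of $P_1$ which, via the central-projection machinery just developed, will itself be shown to be a ball. For $h = H(c)$, Lemma~\ref{notate2} gives $S(c, H(c)) = \{c\}$, a $0$-dimensional ball. The ranges $h > H(c)$ and (when $c \neq 0$) $h < H(c)$ are treated with the Case~1 and Case~2 trapping constructions respectively. I outline Case~1 in detail; Case~2 is symmetric, and in fact the Case~2 construction (Lemmas~\ref{boundedD1},~\ref{boundedP1}) is available for \emph{every} $h \in [0, H(c))$, so only the $h > H(c)$ side constrains $\epsilon_c$. Fix $k_2 > \langle g, c\rangle$; by Lemmas~\ref{boundedD} and~\ref{boundedP} choose $\epsilon_c > 0$ so that for every $h \in (H(c), H(c) + \epsilon_c)$ we have $D(c,h) \subset \Delta_2$ and $h < \inf_{P_2} H$, then pick $k_1 \in (\langle g, c\rangle, k_2)$ with $\max_{\Delta_1} H < h$. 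Select $s_0 \in \mathrm{ri}(\Delta_1)$ as in Lemma~\ref{lemcdash}, and let $s_1$, $s_2$ be as defined before Lemma~\ref{D2lem}.

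Define the central projection $\pi : S(c,h) \to P_1$ by $\pi(y) = r_{s_0 \to y} \cap P_1$. For any $y \in S(c,h) \subset c^+$, property~\textbf{O2} together with $s_0 \in \mathrm{int}(c^+)$ forces the ray to remain in $c^+$ at least as far as $y$; since $\langle g, s_0\rangle < k_1 < \langle g, y\rangle$ (the latter because otherwise $y \in \Delta_1$, contradicting $\max_{\Delta_1} H < h$), the ray crosses the hyperplane $P(g, k_1)$ at a unique point of $\mathrm{ri}(P_1) = P_1 \cap \mathrm{int}(c^+)$, and hence $\pi$ is well-defined. Continuity of $\pi$ follows from the argument of Lemma~\ref{basicproj} applied to $P_1$, while injectivity follows from the strict monotonicity of $H$ along rays $r_{s_0 \to x}$ within $\Delta_2$ (Lemma~\ref{lemcdash}), which forbids two points of $S(c,h)$ from lying on a common ray through $s_0$.

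The image of $\pi$ is exactly $P_1^* \equiv \{x \in P_1 : \tilde H(x) \ge h\}$: a ray from $s_0$ through $x \in P_1$ can attain level $h$ only within $\Delta_2$ (since $S(c,h) \subset D(c,h) \subset \Delta_2$), and does so precisely when the exit value $\tilde H(x)$ is at least $h$. As $\tilde H(s_1) = H(s_2) \ge \inf_{P_2} H > h$, we have $s_1$ in the relative interior of $P_1^*$. To show $P_1^*$ is a ball I apply Lemma~\ref{lemball} with centre $s_1$: along any ray from $s_1$ in $\mathrm{Aff}(P_1) = s_1 + g^\perp$, Lemma~\ref{mainorderlemma} (whose Possibility~3 case splices the situation where the projection image on $\mathrm{relbd}(\Delta_2)$ transitions between $\partial c^+$ and $\mathrm{ri}(P_2)$) shows $\tilde H$ is strictly decreasing wherever the projection image lies in $\partial c^+$, while on the portion where the image lies in $\mathrm{ri}(P_2)$ we automatically have $\tilde H > h$. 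So the continuous function $\tilde H$ crosses the level $h$ at most once along each such ray, yielding the single-intersection hypothesis of Lemma~\ref{lemball}. Thus $P_1^*$ is a ball, and since $\pi$ is a continuous bijection between the compact Hausdorff spaces $S(c,h)$ and $P_1^*$, it is a homeomorphism, so $S(c,h)$ is a ball as well.

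The main obstacle is exactly this single-crossing property of $\tilde H$ along rays from $s_1$: Lemma~\ref{mainorderlemma} delivers monotonicity only when both projection images sit on $\partial c^+$ (or, in Case~2, on $\mathrm{relbd}(c^-)$), and one must glue this monotonicity to the uniform lower bound $\tilde H > h$ over the $\mathrm{ri}(P_2)$ portion of $\mathrm{relbd}(\Delta_2)$ to force a single crossing of level $h$ along each ray in $\mathrm{Aff}(P_1)$. Everything else---well-definedness of $\pi$, continuity via Lemma~\ref{basicproj}, and upgrading the continuous bijection to a homeomorphism via compactness---is routine given the preceding lemmas.
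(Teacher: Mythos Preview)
Your proof is correct and follows essentially the same route as the paper's: trap $S(c,h)$ between $P_1$ and $P_2$, project centrally from $s_0$ onto $P_1$, characterise the image as $P_{10} = \{x \in P_1 : \tilde H(x) \geq h\}$, and use Lemma~\ref{mainorderlemma} together with the bound $\tilde H > h$ on the $P_2$-portion of $\mathrm{relbd}(\Delta_2)$ to verify the single-crossing hypothesis of Lemma~\ref{lemball}. The only cosmetic difference is that the paper obtains the homeomorphism $S(c,h) \cong P_{10}$ via Lemma~\ref{twosets}, while you argue directly that a continuous bijection from a compact space to a Hausdorff space is a homeomorphism.
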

\begin{proof}
In the special cases $h=0$ and $h = H(c)$, $S(c, h)$ is a single point, and hence automatically a $0$ dimensional ball. We next treat the case $h > H(c)$, and use the construction defined at the end of Section~\ref{secprelim} and illustrated in Figure~\ref{stages}. Choose some $k_2 > \langle g, c\rangle$ and as before, define $P_2 \equiv (c, g, k_2)$ and $\Delta_2 \equiv \Delta(c, g, k_2)$. Define $h_{min} \equiv \min\{H(x)\,|\,x \in P_2\} > H(c)$, $\epsilon_c \equiv H(c) - h_{min}$, and choose any $h \in (H(c), H(c) + \epsilon_c)$. By Lemma~\ref{boundedD}, $S(c, h), D(c, h) \subset \Delta_2$. By Lemma~\ref{boundedP}, choose some $k_1$ satisfying $\langle g, c\rangle < k_1 < k_2$ and so that $\mathrm{max}_{x \in \Delta(c, g, k_1)}H(x) < h$. As usual, define $P_1 \equiv P(c, g, k_1)$ and $\Delta_1 \equiv \Delta(c, g, k_1)$. By Lemma~\ref{boundedP}, $\Delta_1 \subset D(c, h)$. 

With $\Theta$ defined as above, by Lemma~\ref{lemcdash} there exists $s_0 \in \mathrm{int}(\Delta_1)$ such that $\langle s_0 - c, \nabla H(y) \rangle < \Theta$. Consider the projection $\Pi_{s_0, P_1}$ onto $P_1$. By the arguments in Section~\ref{secprelim}, $S(c, h) \subset K(s_0, P_1)$. 
By Lemma~\ref{twosets}, $S(c, h)$ is homeomorphic to $P_{10} \subset P_1$ where $P_{10} = \Pi_{s_0, P_1}(S(c, h))$. The construction is illustrated in Figure~\ref{schematic} (left). Since, by Lemma~\ref{lemcdash}, $H$ increases in $\Delta_2$ along rays $r_{s_0 \to x}$ (where $x \in P_1$), we can also characterise $P_{10}$ by $P_{10} = \{x \in P_1\,|\, \tilde H(x) \geq h\}$. By continuity of $\tilde H$, we can characterise $\mathrm{ri}(P_{10}) = \{x \in P_1\,|\, \tilde H(x) > h\}$.

\begin{figure}[h]
\begin{minipage}[h]{0.44\textwidth}
\begin{center}
\includegraphics[width=0.9\textwidth]{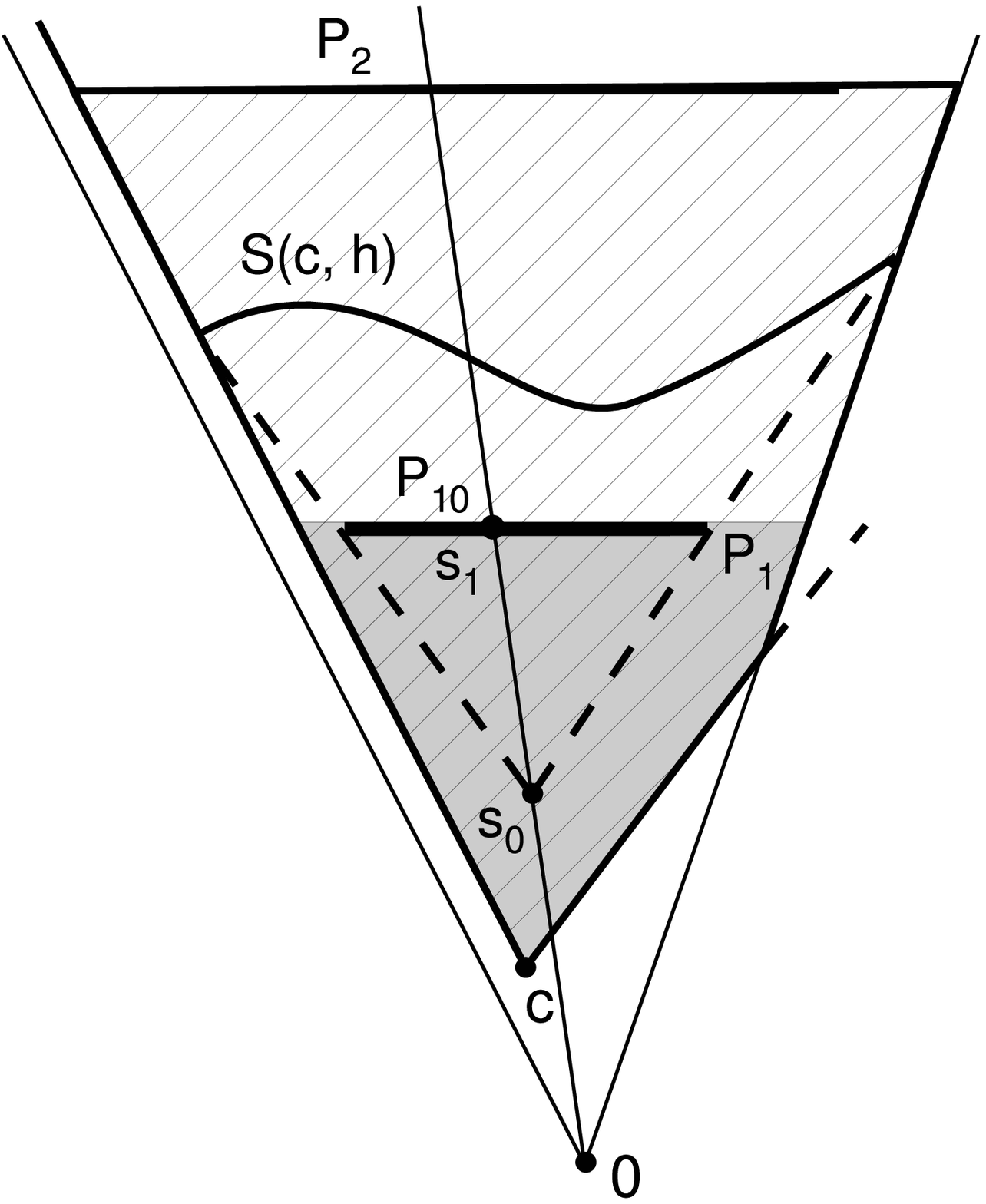}
\end{center}
\end{minipage}
\hfill
\begin{minipage}[h]{0.4\textwidth}
\begin{center}
\includegraphics[width=0.9\textwidth]{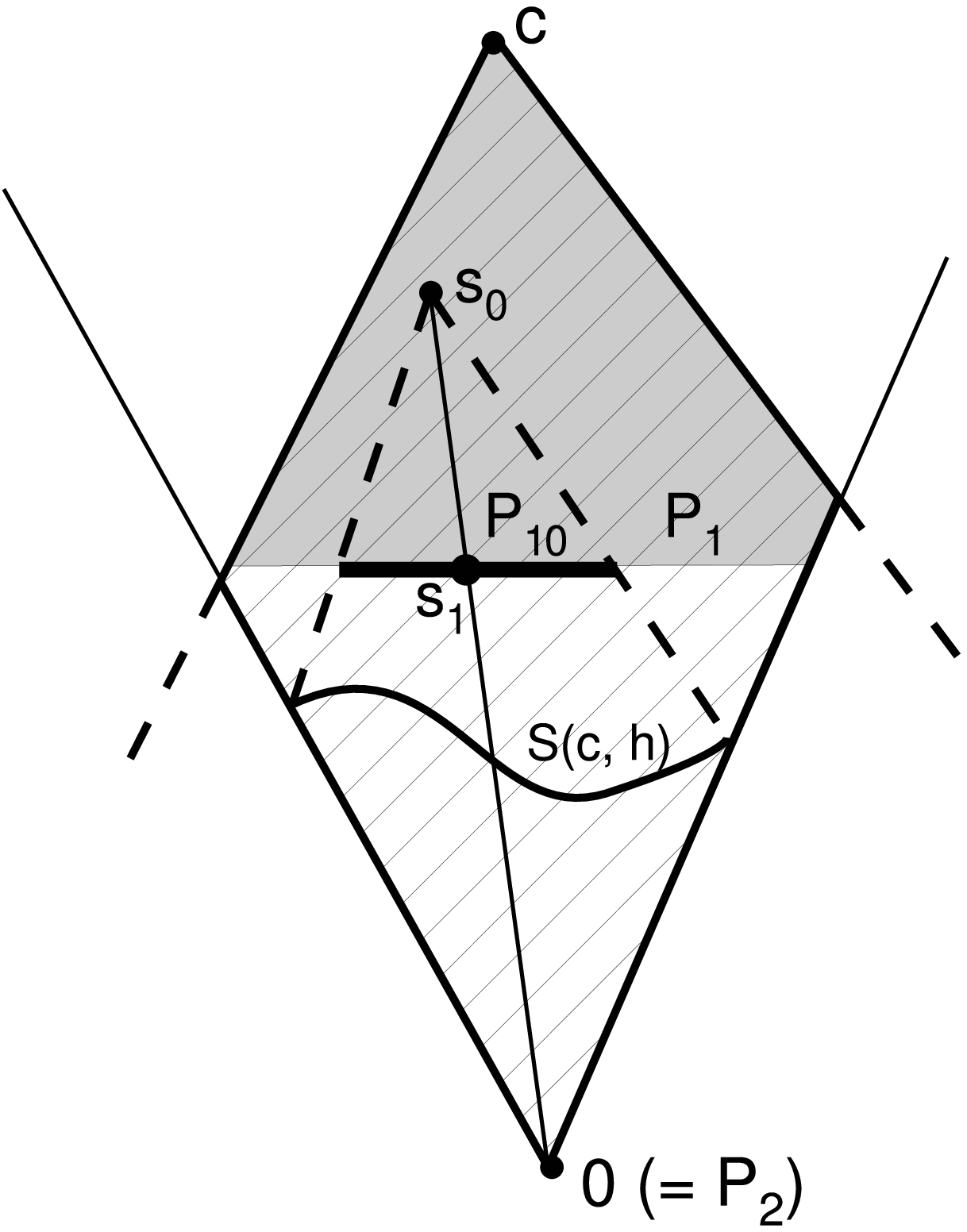}
\end{center}
\end{minipage}
\caption{\label{schematic}The construction of the set $P_{10}$ homeomorphic to $S(c,h)$. $P_1$ and $P_2$ ``enclose'' $S(c, h)$. $s_0$ lies in the interior of $\Delta_1$ (shaded region). $\Delta_2$ (hatched region) is compact. Each ray emanating from $s_0$ which intersects $S(c, h)$ intersects $P_1$ at a unique point. $P_{10}$ can thus be defined as $P_{10} = \Pi_{s_0, P_1}(S(c, h))$. {\em Left.} The situation where $h > H(c)$. {\em Right.} The situation where $h < H(c)$. In this case, $P_2 = \{0\}$.}
\end{figure}

Choose any nonzero $\delta \in g^\perp$. We now show that each ray $r_{s_1 \to \delta}$ intersects $\mathrm{relbd}(P_{10})$ exactly once. Let $b(t) = \Pi_{s_0, \mathrm{relbd}(\Delta_2)}(s_1 + t\delta)$ and $G(t) = \tilde H(s_1 + t\delta) = H(b(t))$ which is continuous as the composition of continuous functions. If $b(t) \in P_2$, then $G(t) > h$, and so, by Lemma~\ref{interiorray}, $G(0) > h$. At the same time, by {\bf O2}, there is a unique $t_f > 0$ such that $s_1 + t_f\delta \in \mathrm{relbd}(P_1)$, and so $b(t_f) \in P_1$, implying that $G(t_f) < h$. By the intermediate value theorem, there is a value $t^{'} \in (0, t_f)$ such that $G(t^{'}) = h$. Moreover, this value of $t$ is unique: suppose there are two values $t^{'} < t^{''}$ such that $G(t^{'}) = G(t^{''}) = h$. Then $b(t^{'}), b(t^{''}) \not \in P_2$ since $\min_{x \in P_2}(H(x)) > h$, i.e. $b(t^{'}), b(t^{''}) \in \partial c^{+}$. But this contradicts Lemma~\ref{mainorderlemma}. 
\\

We now treat the case $h \in (0, H(c))$. Fix $h$, let $k_2 = 0$ and define $\Delta_2 \equiv \Delta(c, g, k_2)$ as before. Note that since $P(c, g, k_2) = \{0\}$, $\mathrm{relbd}(\Delta_2)  = \mathrm{relbd}(c^{-})$. By Lemma~\ref{boundedD1}, $S(c, h), D(c, h) \subset \Delta_2$. By Lemma~\ref{boundedP1}, we can choose $k_1$ satisfying $0 < k_1 < \langle g, c\rangle$ and so that $\mathrm{min}_{x \in \Delta(c, g, k_1)}H(x) > h$. As usual, let $P_1 \equiv P(c, g, k_1)$ and $\Delta_1 \equiv \Delta(c, g, k_1)$. By Lemma~\ref{boundedP1}, $\Delta_1 \subset D(c, h)$. 

By Lemma~\ref{lemcdash} there exists $s_0 \in \mathrm{ri}(\Delta_1)$ such that $\langle c- s_0, \nabla H(y) \rangle < \Theta$. Consider the projection $\Pi_{s_0, P_1}$ onto $P_1$. As in the previous case, $S(c, h) \subset K(s_0, P_1)$, and $S(c, h)$ is homeomorphic to $P_{10} = \Pi_{s_0, P_1}(S(c, h))$. The construction is illustrated in Figure~\ref{schematic} (right). Since, by Lemma~\ref{lemcdash}, $H$ decreases in $\Delta_2$ along rays $r_{s_0 \to x}$ ($x \in P_1$), we can also characterise $P_{10}$ by $P_{10} = \{x \in P_1\,|\, \tilde H(x) \leq h\}$. Similarly we have $\mathrm{ri}(P_{10}) = \{x \in P_1\,|\, \tilde H(x) < h\}$. 

Choose any nonzero $\delta \in T(c,g,k_1)$. Again, each ray $r_{s_1 \to \delta}$ intersects $\mathrm{relbd}(P_{10})$ exactly once. Let $b(t)$ and $G(t)$ be defined as before. $G(0) = 0 < h$, and again, there is a unique $t_f > 0$ such that $s_1 + t_f\delta \in \mathrm{relbd}(P_1)$, and so $b(t_f) \in P_1$, implying that $G(t_f) > h$. By the intermediate value theorem, there is a value $t^{'} \in (0, t_f)$ such that $G(t^{'}) = h$. Moreover, this value is unique: suppose there are two distinct values $t^{'}, t^{''}$ such that $G(t^{'}) = G(t^{''}) = h$. Recall, that $\mathrm{relbd}(\Delta_2)  = \mathrm{relbd}(c^{-})$, and so $b(t^{'}), b(t^{''}) \in \mathrm{relbd}(c^{-})$. But this contradicts Lemma~\ref{mainorderlemma}. 
\\

We now complete the argument for both cases. The characterisation $P_{10} = \{x \in P_1\,|\, \tilde H(x) \geq h\}$ (Case 1) and $P_{10} = \{x \in P_1\,|\, \tilde H(x) \leq h\}$ (Case 2) shows us that $P_{10}$ is closed as the inverse image of a closed set under a continuous function. It is bounded as a subset of the bounded set $P_1$, and hence compact. $\Pi_{s_0, \mathrm{relbd}(\Delta_2)}(s_1) \in P_2$ implies in Case 1 that $\tilde H(s_1) > h$, and in Case 2 that $\tilde H(s_1) =0 < h$, and so in either case $s_1 \in \mathrm{ri}(P_{10})$. If $P_1$ consists of the single point $s_1$, then $P_{10}$ consists solely of this point. Otherwise, we have seen that each ray emanating from $s_1$ and lying in $P_1$ intersects $\mathrm{relbd}(P_{10})$ exactly once. Applying Lemma~\ref{lemball}, we see that $P_{10}$ is a ball in $\mathrm{Aff}(P_1)$. Consequently $S(c, h)$ is a ball. 
\end{proof}\\

Note that $S(c, h)$ has dimension $\mathrm{dim}(\mathrm{Aff}(P_1))$. So $\mathrm{dim}(S(c, h)) = n-1$ when $h > H(c)$, and when $h < H(c)$, $0 \leq \mathrm{dim}(S(c, h)) \leq n-1$. Having proved this key lemma, we are now in a position to clarify the structure of the equilibrium set $E$. This is done in Lemmas~\ref{hplusminuslem}~to~\ref{Eunbounded}.

\vspace{0.25cm}
\begin{lemma}
\label{hplusminuslem}
Consider any $c \in E$. There is some $\epsilon_c > 0$ such that for each $h \in [0, H(c) + \epsilon_c)$, $S(c, h)$ contains an equilibrium.
\end{lemma}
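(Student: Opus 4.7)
The plan is essentially to combine three ingredients already established: (i) Lemma~\ref{mainhomeolem}, which gives $\epsilon_c>0$ such that $S(c,h)$ is a ball for all $h \in [0, H(c)+\epsilon_c)$; (ii) Lemma~\ref{forwardinvar1}, which says each such $S(c,h)$ is forward invariant; and (iii) the standard topological fact, already invoked in the proof of Lemma~\ref{boundedeq}, that a nonempty compact forward-invariant set with the fixed point property must contain an equilibrium of the continuous vector field $F$.

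First I would dispatch the two degenerate cases. When $h=0$, Lemma~\ref{notate2} gives $S(c,0)=\{0\}$, and $0\in E$ by the earlier observation that $F(0)=0$. When $h=H(c)$, Lemma~\ref{notate2} gives $S(c,h)=\{c\}$, and $c\in E$ by assumption. So in both of these cases the conclusion is immediate.

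For the remaining values $h \in (0,H(c))\cup(H(c),H(c)+\epsilon_c)$, I would simply invoke Lemma~\ref{mainhomeolem} to obtain some $\epsilon_c>0$ for which $S(c,h)$ is a ball in the sense of this paper, i.e.\ homeomorphic to a nonempty compact convex subset of $\mathbb{R}^n$. Such a set has the fixed point property (pull Brouwer back along the homeomorphism), so any continuous self-map has a fixed point. Applying this to the time-$t$ maps $\phi_t$ restricted to $S(c,h)$ (which is forward invariant by Lemma~\ref{forwardinvar1}), one obtains, for each small $t>0$, a fixed point $y_t\in S(c,h)$. A standard compactness argument — taking $t_n\downarrow 0$, extracting a convergent subsequence $y_{t_n}\to y^\ast \in S(c,h)$, and using continuity of $F$ — produces $y^\ast \in E\cap S(c,h)$. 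This is exactly the argument already cited from Spanier in the proof of Lemma~\ref{boundedeq}.

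There is no real obstacle here: the hard work was done in Lemma~\ref{mainhomeolem}, which identified $\epsilon_c$ and proved that $S(c,h)$ is a ball. The only thing to check carefully is that the value of $\epsilon_c$ produced by Lemma~\ref{mainhomeolem} is the same one we use here (or any smaller positive number), and that the degenerate endpoint $h=H(c)$ is handled separately since there $S(c,h)$ degenerates to the single point $\{c\}$ rather than being treated by the ball construction.
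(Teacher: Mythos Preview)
Your proposal is correct and matches the paper's own proof essentially line for line: invoke Lemma~\ref{mainhomeolem} for the ball structure, Lemma~\ref{forwardinvar1} for forward invariance, and then the fixed-point argument (as in Lemma~\ref{boundedeq}) to obtain an equilibrium. The paper's version is terser and does not separate out the degenerate cases $h=0$ and $h=H(c)$, but these are already covered by the definition of a $0$-dimensional ball.
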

\begin{proof}
Choosing $\epsilon_c$ as in Lemma~\ref{mainhomeolem}, this lemma tells us that $S(c, h)$ is a ball. Further, $S(c, h)$ is forward invariant by Lemma~\ref{forwardinvar1}. Thus it contains an equilibrium. 
\end{proof}\\

{\bf Remark.} Note that each step towards the proof of Lemma~\ref{hplusminuslem} has needed only monotonicity rather than strong monotonicity of $\phi$. However strong monotonicity is needed for the next result. \\

\begin{lemma}
\label{orderedeq}
Any two equilibria $c_1$ and $c_2$ must satisfy $c_1 \gg c_2$ or $c_2 \gg c_1$. 
\end{lemma}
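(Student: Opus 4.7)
The plan is to argue by contradiction. Suppose $c_1 \neq c_2$ are equilibria with neither $c_1 \gg c_2$ nor $c_2 \gg c_1$. Lemma~\ref{nobordereq} immediately upgrades any weak $K$-comparability between distinct equilibria to strict $\gg$-comparability, so the failure of the conclusion forces $c_1$ and $c_2$ to be $K$-incomparable.

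I would first reduce to equilibria sharing a level set. Assume without loss of generality $H(c_1) \leq H(c_2)$. In the strict case, Lemma~\ref{mainhomeolem} applied at $c_2$ shows that $S_{-}(c_2, H(c_1))$ is a ball; it is forward invariant by Lemma~\ref{forwardinvar1}, and hence by Lemma~\ref{hplusminuslem} contains an equilibrium $c_1'$. Since $c_1' \leq c_2$ and $H(c_1') < H(c_2)$, Lemma~\ref{nobordereq} promotes this to $c_1' \ll c_2$. If $c_1 = c_1'$ we obtain $c_1 \ll c_2$, contradicting the supposition; otherwise $c_1$ and $c_1'$ are distinct equilibria on the level $H(c_1)$, and these must be $K$-incomparable (distinct $K$-comparable points always have different $H$-values, by strict monotonicity of $H$).

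It remains to rule out two distinct $K$-incomparable equilibria $c, c'$ sharing a level $h$. I expect this to be the main obstacle. My plan is to iterate Lemma~\ref{hplusminuslem} downward: for each small $\delta > 0$, the ball $S_-(c, h-\delta)$ contains an equilibrium $\bar c(\delta) \ll c$, and similarly $S_-(c', h-\delta)$ contains $\bar c'(\delta) \ll c'$; the ball-shrinkage in Lemma~\ref{boundedD1} guarantees $\bar c(\delta) \to c$ and $\bar c'(\delta) \to c'$ as $\delta \to 0^+$, so the two descending branches stay distinct and incomparable for small $\delta$. Tracking the branches down toward level $0$, where $0$ is the only equilibrium, forces them to coalesce at some positive level $h_* \in (0, h)$, producing a common lower equilibrium $c_*$ with $c_* \ll c$ and $c_* \ll c'$. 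Then both $c$ and $c'$ lie in $S_+(c_*, h)$, which is a ball by Lemma~\ref{mainhomeolem}, and the projection machinery of Section~4---in particular the strict monotonicity of $\tilde H$ along rays emanating from the auxiliary base point $s_0$ codified in Lemma~\ref{mainorderlemma}---should force uniqueness of the equilibrium on this ball, contradicting the existence of the pair $c, c'$.

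The most delicate technical step is making the descending-branch coalescence rigorous, because the equilibria supplied by Lemma~\ref{hplusminuslem} arise as selections from generally ball-valued equilibrium sets on each level, not as canonical continuous branches. I anticipate this requires a subsequential-limit argument at the supremum of $\delta$ for which distinct incomparable branches persist, using closedness of $E$ and the diameter control of Lemma~\ref{boundedD1} to produce the coalescing equilibrium $c_*$.
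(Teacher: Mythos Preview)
Your proposal has two genuine gaps, and the second is fatal.

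First, the range issue: Lemma~\ref{mainhomeolem} only guarantees that $S_+(c_*,h)$ is a ball for $h \in [H(c_*), H(c_*)+\epsilon_{c_*})$, i.e.\ for $h$ \emph{close} to $H(c_*)$. Your coalescence argument produces some $c_*$ with $H(c_*)=h_*<h$, but gives no control on $h-h_*$; nothing prevents the two descending branches from first meeting only at $0$, in which case $c_*=0$ and you would need the full level set $S(0,h)$ to be a ball, which is not available at this stage of the argument.

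Second, and more seriously, even if $S_+(c_*,h)$ were a ball containing both $c$ and $c'$, you have no mechanism for uniqueness. Lemma~\ref{mainorderlemma} and the function $\tilde H$ concern only the values of $H$ along projected rays; they establish that $S(c_*,h)$ is \emph{homeomorphic to} a ball, nothing more. Brouwer-type arguments give existence of an equilibrium on a forward-invariant ball, never uniqueness. The claim that ``the projection machinery\ldots should force uniqueness of the equilibrium on this ball'' is precisely the content of the lemma you are trying to prove, so this step is circular.

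The paper's argument avoids both problems by never seeking a common lower equilibrium. It descends from just one of the two equilibria, say $c_2$, using Lemma~\ref{hplusminuslem} to produce equilibria on each $S_-(c_2,h')$, and asks for the infimum $\tilde h$ of levels at which such an equilibrium lies \emph{outside} $c_1^-$. A compactness/closure argument then manufactures an equilibrium $\tilde e$ in $c_2^- \cap (c_1-\partial K)$, and Lemma~\ref{nobordereq} (the only place strong monotonicity enters) kills this possibility. The contradiction comes from the boundary structure of $c_1-\partial K$, not from any uniqueness-on-a-ball principle.
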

\begin{proof}
By Lemma~\ref{nobordereq}, we cannot have $c_1 \in c_2^{\partial}$. So either $c_1$ and $c_2$ are unordered or the conclusion is true. We now show that $c_1$ and $c_2$ cannot be unordered. Suppose the contrary, so that $c_2 \in Y\backslash (c_1^{+} \cup c_1^{-})$. By Lemma~\ref{hplusminuslem} we know that for each $h \in [0, H(c_2))$, $S(c_2, h)$ contains an equilibrium. By Lemma~\ref{boundedD1}, for $H(c_2) - h$ sufficiently small, $S(c_2, h) \subset Y\backslash c_1^{-}$. On the other hand, $S(c_2, 0) = \{0\} \subset c_1^{-}$. Thus 
\[
\tilde h \equiv \inf\{h \,|\,S(c_2, h) \backslash c_1^{-}\,\,\mbox{contains an equilibrium}\}, 
\]
satisfies $\tilde h \in (0, H(c_2))$. For $h \in (\tilde h, H(c_2))$, let $e(h)$ be any equilibrium in $S(c_2, h) \backslash c_1^{-}$. Choose some sequence $h_i \downarrow \tilde h$, and let $e_i = e(h_i)$. As $e_i$ is an infinite sequence in $c_2^{-}$, which by Lemma~\ref{smalldelta1} is compact, it has a convergent subsequence $e_{i_k} \to \tilde{e} \in S(0, \tilde h)$. By closure of $E$, $\tilde e$ is an equilibrium, and since $(e_i) \subset c_2^{-}\backslash c_1^{-}$, $\tilde e \in \mathrm{cl}(c_2^{-} \backslash c_1^{-})$. If $\tilde e \in c_2^{-} \backslash c_1^{-}$, then again by Lemma~\ref{boundedD1} for small enough $\epsilon$, $S(\tilde e, \tilde h - \epsilon) \subset Y \backslash c_1^{-}$, and by Lemma~\ref{hplusminuslem}, there is an equilibrium in $S(\tilde e, \tilde h - \epsilon)$. Since $S(\tilde e, \tilde h - \epsilon) \subset S(c_2, \tilde h - \epsilon)$, this contradicts the definition of $\tilde h$. So $\tilde e \in c_2^{-} \cap (c_1 - \partial K)$. But by Lemma~\ref{nobordereq}, there are no equilibria in $c_1 - \partial K$ except $c_1$, and by assumption $c_1 \not \in c_2^{-}$. 
\end{proof}\\

{\bf Remarks and definitions.} If a level set $S(0,h)$ contains an equilibrium, then by Lemma~\ref{orderedeq} this equilibrium is unique, and we term it $e(h)$. Define $M^{'}$ to be the supremum of values $h$ such that $S(0, h)$ contains an equilibrium. Clearly $0 < M^{'} \leq M$. By Lemma~\ref{hplusminuslem}, for each $h \in [0, M^{'})$, $S(0, h)$ contains an equilibrium. We thus get a bijective, order preserving, map $e: [0, M^{'}) \to E$.

\vspace{0.25cm}
\begin{lemma}
\label{Ehomeoline}
$e$ is a homeomorphism.
\end{lemma}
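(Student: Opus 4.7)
My plan is to prove the two directions of continuity separately; the inverse direction is essentially immediate, and the forward direction will use a standard ``every subsequence has a sub-subsequence converging to the same limit'' argument.

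First I would observe that the inverse map $e^{-1} : E \to [0, M')$ is simply the restriction of $H$ to $E$: by definition $e(h)$ is the unique equilibrium on the level set $S(0,h)$, so $H(e(h)) = h$ for every $h \in [0, M')$. Since $H$ is $C^1$ on $Y$ it is certainly continuous on $E$, so $e^{-1}$ is continuous.

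For continuity of $e$, fix $h \in [0, M')$ and a sequence $h_n \to h$ in $[0, M')$. I would first argue that $\{e(h_n)\}$ eventually lies in a bounded set. Choose $\epsilon > 0$ small enough that $[\max(0,h-\epsilon), h+\epsilon] \subset [0, M')$. For $n$ large we have $h - \epsilon \leq h_n \leq h + \epsilon$, and since $e$ is order preserving by Lemma~\ref{orderedeq} and the discussion preceding this lemma, this gives $e(h-\epsilon) \leq e(h_n) \leq e(h+\epsilon)$ (with the lower bound interpreted as $0$ in case $h = 0$). Thus $e(h_n) \in e(h+\epsilon)^{-}$, which is compact by Lemma~\ref{smalldelta1}.

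By Bolzano--Weierstrass, any subsequence of $\{e(h_n)\}$ has a further subsequence $e(h_{n_k}) \to q$ for some $q \in Y$. Since $E$ is closed, $q \in E$; since $H$ is continuous, $H(q) = \lim_k H(e(h_{n_k})) = \lim_k h_{n_k} = h$, so $q$ is an equilibrium on $S(0,h)$. By the uniqueness of equilibria on level sets (Lemma~\ref{orderedeq} together with the definition of $e$), $q = e(h)$. As every subsequence has a sub-subsequence converging to the common limit $e(h)$, the full sequence satisfies $e(h_n) \to e(h)$, so $e$ is continuous. Combined with the continuity of $e^{-1}$ and the fact that $e$ is a bijection, this shows $e$ is a homeomorphism.

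The only non-routine step is verifying boundedness of $\{e(h_n)\}$, and even that is straightforward once we have the ordering of $E$ from Lemma~\ref{orderedeq} and compactness of the downward sets from Lemma~\ref{smalldelta1}; there is no real obstacle here, since the hard geometric work has already been carried out in the earlier lemmas.
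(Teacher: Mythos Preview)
Your proof is correct and follows essentially the same approach as the paper's: the paper also observes that $e^{-1} = H|_E$ is continuous, then bounds the sequence $e(h_i)$ inside a compact set of the form $e(h_{\max})^{-}$ (using $h_{\max} = \sup_i h_i$ rather than your $h+\epsilon$, a cosmetic difference), and finishes with the identical sub-subsequence argument invoking closedness of $E$, continuity of $H$, and uniqueness of equilibria on level sets.
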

\begin{proof}
We already know that $e$ is bijective. The inverse $e^{-1} = \left.H\right|_E$ is continuous as $H$ is continuous. It remains to show that $e$ is continuous. Fix any $h \in [0, M^{'})$ and consider any sequence of values $(h_i) \subset [0, M^{'})$ with $h_i \to h$, and the corresponding equilibria $e_i \equiv e(h_i)$. Let $h_{max} = \sup_i(h_i) < M^{'}$. Then $e_i \leq e(h_{max})$, i.e. $\{e_i\} \subset e(h_{max})^{-}$ which is compact by Lemma~\ref{smalldelta1}. Thus $(e_i)$ contains no divergent subsequences. Consider any convergent subsequence of $(e_i)$, say $e_{i_k} \to \tilde{e}$. By closure of $E$, $\tilde{e} \in E$, and by continuity of $H$, $\tilde{e} \in S(0,h)$. By Lemma~\ref{orderedeq}, this is the only equilibrium on $S(0,h)$, i.e. $\tilde{e} = e(h)$. Thus $e_{i_k} \to e(h)$, and since the subsequence was arbitrary, $e_i \to e(h)$, proving that $e$ is continuous. 
\end{proof}\\

\begin{lemma}
\label{Eunbounded}
$E$ is unbounded. 
\end{lemma}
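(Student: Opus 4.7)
The plan is to argue by contradiction using the homeomorphism $e\colon [0, M^{'}) \to E$ produced by Lemma~\ref{Ehomeoline}. Suppose for contradiction that $E$ is bounded. Since $E$ is closed in $\mathbb{R}^n$ (as observed after the definition of $E$, since $F$ is continuous), boundedness would force $E$ to be compact. But compactness is preserved under homeomorphism, so we would conclude that $[0, M^{'})$ is compact. Since the remarks preceding Lemma~\ref{Ehomeoline} ensure $M^{'} > 0$, this is impossible: if $M^{'} < \infty$, then $[0, M^{'})$ fails to be closed in $\mathbb{R}$, and if $M^{'} = \infty$, then $[0, M^{'})$ is unbounded. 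In either case $[0, M^{'})$ cannot be compact, yielding the desired contradiction.

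If one prefers a concrete sequential argument, choose $h_n \in [0, M^{'})$ with $h_n \to M^{'}$ (e.g. $h_n = M^{'} - 1/n$ if $M^{'}$ is finite, $h_n = n$ otherwise), and consider the equilibria $e_n \equiv e(h_n) \in E$. Under the assumption that $E$ is bounded, Bolzano--Weierstrass produces a convergent subsequence $e_{n_k} \to \tilde e$, and closure of $E$ gives $\tilde e \in E$. By continuity of $H$, $H(\tilde e) = \lim_k h_{n_k} = M^{'}$; but $\tilde e \in E$ forces $H(\tilde e) = e^{-1}(\tilde e) \in [0, M^{'})$, which is incompatible with $H(\tilde e) = M^{'}$. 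This sequential version also handles both the $M^{'}$-finite and $M^{'} = \infty$ cases uniformly.

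I do not anticipate any real obstacle here: all the substantive work has already been done in Lemmas~\ref{hplusminuslem}--\ref{Ehomeoline}, which describe $E$ as a half-open arc parametrised by $H$. Lemma~\ref{Eunbounded} is then a direct topological corollary of the non-compactness of the parameter interval $[0, M^{'})$.
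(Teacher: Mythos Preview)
Your argument is correct and is essentially the same as the paper's: both use that $E$ is closed and homeomorphic (via Lemma~\ref{Ehomeoline}) to the half-open interval $[0, M^{'})$, so that compactness of $E$ would force compactness of $[0, M^{'})$, which is impossible since $M^{'} > 0$. The paper simply states this in one sentence, while you have spelled out the contradiction and added a sequential variant.
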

\begin{proof}
$E$ is closed, but homeomorphic to a half-open interval $[0, M^{'})$. Thus $E$ must be unbounded. 
\end{proof}\\

Incidentally, the claim in Lemma~\ref{Eunbounded} also follows directly from Lemma~\ref{hplusminuslem}, and thus does not require strong monotonicity. Via Lemmas~\ref{Ldef}~to~\ref{Lincreases} we define and explore a scalar function $L$ which serves as a Liapunov function on each level set. 

\vspace{0.25cm}
\begin{lemma}
\label{Ldef}
Given any $y \in Y$, $(y - \partial K) \cap E$ consists of a unique point. 
\end{lemma}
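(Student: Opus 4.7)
My plan is to exploit the parametrization $e: [0, M') \to E$ from Lemma~\ref{Ehomeoline} and identify the required equilibrium as the largest one dominated by $y$. Define $A \equiv \{h \in [0, M') : e(h) \leq y\}$. Since $e(0) = 0$ and $y \in Y \subset K$ gives $0 \leq y$, we have $0 \in A$, so $A \neq \emptyset$. Also $A = e^{-1}(y^{-})$, which is closed in $[0, M')$ because $y^{-}$ is closed in $Y$ and $e$ is continuous. I would then show that $h^{*} \equiv \sup A$ satisfies $h^{*} < M'$: the case $y=0$ is trivial since $y^{-} = \{0\}$ (by pointedness of $K$); when $y \neq 0$, Lemma~\ref{smalldelta1} gives compactness of $y^{-}$, so any sequence $(h_n) \subset A$ with $h_n \to M'$ would yield a subsequence $e(h_{n_k}) \to \tilde e \in y^{-}$. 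By closedness of $E$, $\tilde e = e(\tilde h)$ for some $\tilde h < M'$, but then continuity of $e^{-1}$ would force $h_{n_k} \to \tilde h$, contradicting $h_n \to M'$. Hence $h^{*} \in [0, M')$, and closedness of $A$ gives $e(h^{*}) \leq y$.

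To conclude existence I would argue by contradiction: suppose $e(h^{*}) \ll y$, i.e., $y - e(h^{*}) \in \mathrm{int}(K)$. Openness of $\mathrm{int}(K)$ together with continuity of $e$ at $h^{*}$ would then furnish some $h \in (h^{*}, M')$ with $y - e(h) \in \mathrm{int}(K)$, putting $h$ in $A$ and contradicting $h^{*} = \sup A$. So $y - e(h^{*}) \in \partial K$, yielding an equilibrium in $(y - \partial K) \cap E$. For uniqueness, suppose $e(h_1), e(h_2) \in y - \partial K$ with $h_1 < h_2$. By Lemma~\ref{orderedeq}, $e(h_2) - e(h_1) \in \mathrm{int}(K)$, and then
\[
y - e(h_1) = (y - e(h_2)) + (e(h_2) - e(h_1))
\]
lies in $\mathrm{int}(K)$ by property {\bf O1}, contradicting $y - e(h_1) \in \partial K$.

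The main obstacle is the verification $\sup A < M'$. Since $E$ is unbounded (Lemma~\ref{Eunbounded}) but parametrized by a bounded-above interval, the homeomorphism $e$ sends $h \to M'$ precisely to unbounded sequences in $E$; compactness of $y^{-}$ is exactly what prevents any such sequence from remaining dominated by $y$. The rest of the argument is essentially bookkeeping with property {\bf O1} and the total ordering of $E$ established in Lemma~\ref{orderedeq}.
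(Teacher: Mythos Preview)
Your proof is correct and relies on the same structural ingredients as the paper's (compactness of $y^{-}$ from Lemma~\ref{smalldelta1}, unboundedness of $E$ from Lemma~\ref{Eunbounded}, the homeomorphism $e$ from Lemma~\ref{Ehomeoline}, and the total ordering of $E$ from Lemma~\ref{orderedeq}); the uniqueness arguments are essentially identical.

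For existence the paper takes a slightly different and more economical route: rather than defining $A$ and analysing $\sup A$, it observes that $E$ meets both $y^{-}$ (since $0\in E$) and its complement (since $E$ is unbounded while $y^{-}$ is bounded), and then invokes connectedness of $E$ directly---if $(y-\partial K)\cap E$ were empty, the relatively open sets $(y-\mathrm{int}(K))\cap E$ and $(\mathbb{R}^n\setminus(y-K))\cap E$ would separate $E$. Your supremum argument is more constructive (it pinpoints the intersection as $e(\sup A)$), while the paper's connectedness argument avoids the case split $y=0$ versus $y\neq 0$ and the subsequence extraction needed to show $\sup A < M'$. Both are perfectly valid realisations of the same idea.
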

\begin{proof}
Firstly, $0 \in y^{-} \cap E$, so $y^{-} \cap E$ is nonempty; secondly, by Lemma~\ref{smalldelta1}, $y^{-}$ is bounded, and since, by Lemma~\ref{Eunbounded}, $E$ is unbounded, $E \not \subset y^{-}$, i.e. $Y\backslash y^{-} \cap E$ is nonempty. So $E$ intersects both $y^{-}$ and $Y\backslash y^{-}$. By Lemma~\ref{Ehomeoline}, $E$ is connected as the continuous image of connected set. Thus there must be a point in $(y - \partial K) \cap E$ for otherwise $(y -\mathrm{int}(K)) \cap E$ and $(\mathbb{R}^n\backslash (y-K)) \cap E$ are a separation of $E$. Suppose $(y - \partial K) \cap E$ contains two points $p$ and $q$. By Lemma~\ref{orderedeq}, we can choose $p \ll q$. But $p \ll q \leq y$ implies that $p \ll y$ contradicting the fact that $p \in y - \partial K$. 
\end{proof}

\vspace{0.25cm}
\begin{definition}
As a consequence of Lemma~\ref{Ldef}, define $Q:Y \to E$ by $Q(y) = (y - \partial K) \cap E$, and $L: Y \to [0, M^{'})$ by $L(y) = H(Q(y))$. 
\end{definition}
\vspace{0.25cm}

\begin{lemma}
\label{Lmax}
If $S(0,h)\cap E \not = \emptyset$, then $L(y) < L(e(h))$ for all $y \in S(0,h)\backslash\{e(h)\}$.  
\end{lemma}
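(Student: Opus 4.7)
My plan is to observe that the claim unpacks into a very short argument: we only need to note that $L(e(h)) = h$, and that $Q(y) < y$ whenever $y$ is a non-equilibrium point, from which the desired strict inequality follows from the monotonicity of $H$ in the $K$-ordering.

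First I would check that $L(e(h)) = h$. Since $0 \in \partial K$ (the boundary of any proper pointed cone contains the apex), $e(h) \in e(h) - \partial K$, so $e(h) \in (e(h) - \partial K) \cap E$. By Lemma~\ref{Ldef} this intersection is a singleton, so $Q(e(h)) = e(h)$ and hence $L(e(h)) = H(e(h)) = h$.

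Next, fix any $y \in S(0,h) \setminus \{e(h)\}$. By the uniqueness of equilibria on level sets (Lemma~\ref{orderedeq} and the definition of $e(h)$), $y$ is not an equilibrium. On the other hand $Q(y)$ \emph{is} an equilibrium, so $Q(y) \neq y$. Since $Q(y) \in y - \partial K \subset y - K$, this gives $Q(y) < y$. By the remark in the introduction (convexity of $Y$ together with $\nabla H \in \mathrm{int}(K^{*})$, which makes $H$ strictly increasing along any nontrivial segment directed by a vector in $K$), integrating $\nabla H$ along $[Q(y), y]$ yields $H(Q(y)) < H(y) = h$. Thus $L(y) = H(Q(y)) < h = L(e(h))$.

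There is no real obstacle here; the lemma is essentially a direct consequence of the definitions of $Q$ and $L$ together with the already-established fact that $H$ increases strictly along $K$-ordered pairs in $Y$. The only small subtlety worth recording explicitly is that $y$ must genuinely be a non-equilibrium so that $Q(y) \neq y$, which is exactly what $y \in S(0,h) \setminus \{e(h)\}$ guarantees given uniqueness of the equilibrium on the level set.
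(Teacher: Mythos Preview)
Your proof is correct and follows essentially the same route as the paper's own argument: show $Q(y) < y$ for $y \in S(0,h)\setminus\{e(h)\}$, then use strict monotonicity of $H$ to conclude $L(y) = H(Q(y)) < H(y) = h = L(e(h))$. You simply spell out in more detail the steps the paper leaves implicit (why $y \notin E$, why $Q(e(h)) = e(h)$, and the monotonicity of $H$).
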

\begin{proof}
$Q(y) \leq y$ by definition, and since $y \not \in E$, $Q(y) \not = y$, so $Q(y) < y$. Thus $L(y) = H(Q(y)) < H(y) = h = L(e(h))$. 
\end{proof}\\

\begin{lemma}
\label{Lcont}
$L$ is continuous. 
\end{lemma}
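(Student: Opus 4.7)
The plan is to reduce continuity of $L = H \circ Q$ to continuity of $Q$, since $H$ is continuous by assumption. So I would fix $y \in Y$ and a sequence $y_i \to y$ in $Y$, write $c_i = Q(y_i)$ and $c = Q(y)$, and show $c_i \to c$.

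The first step is to confine $(c_i)$ to a compact set. The sequence $(y_i)$ is bounded as a convergent sequence, so Lemma~\ref{upperboundlem} supplies some $z \in Y$ with $z > y_i$ for every $i$. By definition of $Q$, each $c_i \leq y_i \leq z$, and $c_i \in Y$, so $c_i \in z^{-}$. By Lemma~\ref{smalldelta1} the set $z^{-}$ is compact, hence $(c_i)$ is bounded and admits convergent subsequences.

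The second step is a standard subsequence argument. Let $c_{i_k} \to \tilde c$ be any convergent subsequence. The inclusion $c_i \in E$ and closedness of $E$ give $\tilde c \in E$. Moreover $y_{i_k} - c_{i_k} \in \partial K$ for every $k$; since $\partial K$ is closed and $y_{i_k} - c_{i_k} \to y - \tilde c$, we get $y - \tilde c \in \partial K$, i.e.\ $\tilde c \in (y - \partial K) \cap E$. Lemma~\ref{Ldef} says this intersection is the single point $Q(y) = c$, so $\tilde c = c$. Since every convergent subsequence of the bounded sequence $(c_i)$ has the same limit $c$, the full sequence satisfies $c_i \to c$. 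Continuity of $H$ then gives $L(y_i) = H(c_i) \to H(c) = L(y)$.

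The main point requiring care is the identification $\tilde c \in y - \partial K$, which hinges only on $\partial K$ being closed in $\mathbb{R}^n$ (true since $\partial K = K \setminus \mathrm{int}(K)$ is the topological boundary of a closed set); the rest is bookkeeping assembled from lemmas already proved. No further obstacle arises because uniqueness of $Q(y)$ from Lemma~\ref{Ldef} removes any ambiguity in the limit.
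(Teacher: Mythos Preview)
Your argument is correct and follows essentially the same route as the paper's proof: reduce to continuity of $Q$, bound $(Q(y_i))$ via Lemma~\ref{upperboundlem} and compactness of $z^{-}$ from Lemma~\ref{smalldelta1}, then run the subsequence argument using closedness of $E$ and of $\partial K$, with uniqueness from Lemma~\ref{Ldef} pinning down the limit. The only cosmetic difference is notation.
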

\begin{proof}
Since $H$ is continuous, $L$ is continuous provided that $Q$ is. Consider any $y \in Y$, a sequence $y_i \to y$, and the values $Q(y_i) = (y_i - \partial K) \cap E$. Define $x_i = y_i - Q(y_i)$ and note that $x_i \in \partial K$. By Lemma~\ref{upperboundlem}, since $\{y_i\}$ is bounded, we can find $z \in Y$ with $z > \{y_i\}$ and hence $z > \{Q(y_i)\}$. Since $z^{-}$ is bounded by Lemma~\ref{smalldelta1}, $(Q(y_i))$ contains no divergent subsequences. Consider any convergent subsequence of $(Q(y_i))$, say $Q(y_{i_k}) \to q$. Since $E$ is closed, $q \in E$. We have $x_{i_k} \to y-q$. Since  $\{x_{i_k}\} \subset \partial K$, and $\partial K$ is closed, $y-q \in \partial K$, i.e. $q \in y - \partial K$. Since the intersection between $y - \partial K$ and $E$ consists of the unique point $Q(y)$ (Lemma~\ref{Ldef}), $q = Q(y)$. As the subsequence $(y_{i_k})$ was arbitrary, $Q(y_i) \to Q(y)$ proving that $Q$ is continuous. 
\end{proof}\\

\begin{lemma}
\label{Lincreases}
If $y \not \in E$, then $L(\phi_t(y)) > L(y)$ for all $t > 0$. I.e. $L$ increases strictly along nontrivial orbits.
\end{lemma}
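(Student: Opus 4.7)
The plan is to exploit strong monotonicity together with the total orderedness of $E$ (Lemma~\ref{orderedeq}) to force $Q(\phi_t(y))$ strictly above $Q(y)$ in the order, and then use the fact that $H$ strictly increases along $K$.

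First I would observe that $y \notin E$ implies $Q(y) < y$, since $Q(y) \in y - \partial K$ and $Q(y) \neq y$. Because $Q(y)$ is an equilibrium, $\phi_t(Q(y)) = Q(y)$, and strong monotonicity applied to $y > Q(y)$ yields $\phi_t(y) \gg Q(y)$ for all $t > 0$, i.e.\ $\phi_t(y) - Q(y) \in \mathrm{int}(K)$. In particular, $Q(y)$ lies in the open set $\phi_t(y) - \mathrm{int}(K)$ and hence is disjoint from $\phi_t(y) - \partial K$. Since $Q(\phi_t(y))$ is, by definition, the unique point of $(\phi_t(y) - \partial K) \cap E$, we get $Q(\phi_t(y)) \neq Q(y)$.

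Next, I would invoke Lemma~\ref{orderedeq}: two distinct equilibria must be strongly comparable, so either $Q(\phi_t(y)) \gg Q(y)$ or $Q(y) \gg Q(\phi_t(y))$. The second alternative can be ruled out as follows: combining $Q(y) - Q(\phi_t(y)) \in \mathrm{int}(K)$ with $\phi_t(y) - Q(y) \in \mathrm{int}(K)$ and using property \textbf{O1} gives $\phi_t(y) - Q(\phi_t(y)) \in \mathrm{int}(K)$, which contradicts $Q(\phi_t(y)) \in \phi_t(y) - \partial K$. Thus $Q(\phi_t(y)) \gg Q(y)$.

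Finally, I would close the argument by noting that $Q(\phi_t(y)) > Q(y)$ implies $H(Q(\phi_t(y))) > H(Q(y))$ (by the standard line-integral argument given in the introductory remarks, $H$ is strictly $K$-increasing). Therefore $L(\phi_t(y)) > L(y)$ for all $t > 0$. The only slightly delicate step is ruling out $Q(\phi_t(y)) \ll Q(y)$; everything rests on the fact that the interior and boundary of $\phi_t(y) - K$ are disjoint, together with O1 to add two interior vectors. No further continuity or topological input is needed beyond what has already been established.
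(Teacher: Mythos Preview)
Your proof is correct and follows essentially the same line as the paper's: both use strong monotonicity to get $\phi_t(y) \gg Q(y)$, then use the total order on $E$ to conclude $Q(\phi_t(y)) > Q(y)$. The paper phrases the last step slightly more compactly (showing every equilibrium $e \leq Q(y)$ satisfies $e \ll \phi_t(y)$ and hence cannot equal $Q(\phi_t(y))$), while you explicitly invoke the dichotomy from Lemma~\ref{orderedeq} and rule out one branch, but the content is the same.
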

\begin{proof}
Consider any $y \not \in E$ and let $h = H(y)$ so that $y \in S(0,h)$. By the definition of $Q(y)$, $Q(y) < y$. Strong monotonicity implies that for any $t > 0$, $\phi_t(Q(y)) = Q(y) \ll \phi_t(y)$. Consider any $e \in E$. If $e \leq Q(y)$, then $e \ll \phi_t(y)$, i.e. $e \not = Q(\phi_t(y))$. So, $Q(\phi_t(y)) > Q(y)$, and thus $L(\phi_t(y)) > L(y)$. 
\end{proof}\\

The main theorem in this paper can now be proved.

{\em Proof of Theorem~\ref{mainthm}}. Lemma~\ref{orderedeq} and the remarks following this lemma establish the existence of $M^{'} \leq M$, such that for each $h \in [0, M^{'})$, $S(0,h)$ contains a unique equilibrium and if $M^{'} \not = M$, then for $h \in [M^{'}, M)$, $S(0,h)$  contains no equilibria. By Lemma~\ref{Lincreases}, a scalar function $L$ increases strictly along nontrivial orbits. Moreover, if $S(0,h)$ contains an equilibrium, then, by Lemma~\ref{Lmax}, $L$ takes a maximum at this equilibrium, ensuring that all orbits on $S(0,h)$ converge to this equilibrium $e(h)$. If $h \in [M^{'}, M)$, then by a standard argument $S(0,h)$ can contain no $\omega$-limit sets. Assume the contrary and assume that there is a nonequilibrium point $z \in S(0, h)$ such that $\phi_{t_k}(y) \to z$ for some $y \in S(0, h)$ and some sequence of times $t_k \to \infty$. By continuity of $L$ (Lemma~\ref{Lcont}), $L(\phi_{t_k}(y)) \to L(z)$, and as $L$ increases along orbits, $L(\phi_{t}(y)) < L(z)$ for all $t \geq 0$. Since $z \not \in E$, $L(\phi_s(z)) > L(z)$ for any $s > 0$. By continuity of the flow, $L(\phi_{t_k + s}(y)) \to L(\phi_s(z)) > L(z)$, contradicting the fact that $L(\phi_{t}(y)) < L(z)$ for all $t \geq 0$. $\square$

\section{An example}
\label{secexample}

The system of two chemical reactions involving three substrates, $A$, $B$ and $C$:
\[
A + B \rightleftharpoons C,  \quad A \rightleftharpoons B,
\]
with no information on the kinetics except a weak monotonicity condition on reaction rates \cite{banajiSIAM}, gives rise to a dynamical system on $\mathbb{R}^3_{\geq 0}$
\begin{equation}
\label{chemsys}
\left. 
\begin{array}{ccl}
\dot x_1 & = & -f_1(x_1, x_2, x_3) - f_2(x_1, x_2) \\
\dot x_2 & = & -f_1(x_1, x_2,x_3) + f_2(x_1, x_2) \\
\dot x_3 & = & f_1(x_1, x_2,x_3)
\end{array}
\qquad
\right\}
\end{equation}
where $x_1, x_2, x_3$ are the concentrations of $A, B, C$ respectively, and $f_1, f_2$ are arbitrary $C^1$ functions satisfying $f_{11} \equiv \frac{\partial f_1}{\partial x_1} \geq 0$, $f_{12} \equiv  \frac{\partial f_1}{\partial x_2} \geq 0$, $f_{13} \equiv  \frac{\partial f_1}{\partial x_3} \leq 0$, $f_{21} \equiv  \frac{\partial f_2}{\partial x_1} \geq 0$, $f_{22} \equiv  \frac{\partial f_2}{\partial x_2} \leq 0$. It is easy to check that the scalar function $H(x_1, x_2, x_3) = x_1 + x_2 + 2x_3$ is preserved by the system. The level sets of this function are termed ``stoichiometric classes'' of the system. 

\begin{theorem}
If we assume that $f_{13} < 0$, $f_{21} > 0$ and $f_{22} < 0$ everywhere on $\mathbb{R}^3_{\geq 0}$, then (\ref{chemsys}) is globally convergent, i.e. each orbit converges to an equilibrium, which is unique on the associated level set of $H$.
\end{theorem}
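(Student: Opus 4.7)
The plan is to verify the hypotheses of Theorem~\ref{mainthm} (via its immediate corollary) for (\ref{chemsys}), taking $Y = \mathbb{R}^3_{\geq 0}$, and then conclude global convergence using the compactness of stoichiometric classes. The easy checks are as follows. A direct computation gives $\langle \nabla H, F \rangle = (-f_1 - f_2) + (-f_1 + f_2) + 2 f_1 = 0$, so $H$ is a $C^1$ first integral, with $\nabla H = (1,1,2)^T$ having strictly positive components. The positive orthant $\mathbb{R}^3_{\geq 0}$ is forward invariant (inspect the signs of $\dot x_i$ on the coordinate faces of the orthant), contains the equilibrium $0$, and each level set $S(0,h) = \mathbb{R}^3_{\geq 0} \cap H^{-1}(h)$ is the intersection of the closed positive orthant with an affine plane, hence a compact triangle. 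In particular every orbit is bounded.

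The technical core is to exhibit a proper cone $K \subset \mathbb{R}^3$ with $\mathbb{R}^3_{\geq 0} \subseteq K$ and $\nabla H \in \mathrm{int}(K^*)$, with respect to which $\phi$ is strongly monotone. The Jacobian
\[
J(x) \;=\; \begin{pmatrix} -f_{11}-f_{21} & -f_{12}-f_{22} & -f_{13} \\ -f_{11}+f_{21} & -f_{12}+f_{22} & -f_{13} \\ f_{11} & f_{12} & f_{13} \end{pmatrix}
\]
has indefinite entries in positions $(1,2)$ and $(2,1)$ even under the strict assumptions $f_{13}<0$, $f_{21}>0$, $f_{22}<0$, so $K$ cannot be the standard orthant. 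Following the techniques for non-orthant monotonicity of chemical reaction networks developed in \cite{banajiSIAM,banajidynsys,angelileenheersontag}, I would construct $K$ as a simplicial cone whose facet normals are chosen from the stoichiometric structure of the reactions, in such a way that the Kamke condition $\nu_i^T J(x) v \geq 0$ holds on each facet $\{v \in K : \nu_i \cdot v = 0\}$ for every $x \in \mathbb{R}^3_{\geq 0}$. The strict inequalities above are precisely what is needed to upgrade these conditions to strict ones for nonzero tangent directions, yielding strong (rather than merely weak) monotonicity through an irreducibility argument.

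With $K$ in hand, the immediate corollary of Theorem~\ref{mainthm} yields a unique equilibrium on each nonempty level set $S(0,h) \cap \mathbb{R}^3_{\geq 0} = S(0,h)$ and convergence of every orbit on such a level set to that equilibrium. Combined with the boundedness of orbits noted above (which forces $M' = M$ in the notation of the main theorem), this is exactly the claimed global convergence. The main obstacle is the construction and Kamke-type verification of $K$; this is the only step that genuinely uses the strict form of the hypotheses, and it is where the reaction-structure analysis of \cite{banajiSIAM} (or a direct sign analysis of $J$ tailored to the two-reaction graph here) is required. Everything else is algebraic or a direct invocation of machinery already developed.
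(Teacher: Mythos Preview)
Your outline matches the paper's strategy, but the one step you defer---the explicit construction of $K$---is exactly the technical content of the proof, and your guess about its shape is wrong. You write that you would take $K$ to be a \emph{simplicial} cone determined by the stoichiometric structure. In fact the cone that works here,
\[
K = \{x\in\mathbb{R}^3 : x_3\ge 0,\ x_1+x_3\ge 0,\ x_2+x_3\ge 0,\ x_1+x_2+x_3\ge 0\},
\]
has four irredundant facets (equivalently four extremal rays, e.g.\ $(1,0,0)$, $(0,1,0)$, $(-1,0,1)$, $(0,-1,1)$) and is therefore \emph{not} simplicial. This is not an incidental feature: the paper introduces this example precisely to illustrate that non-simplicial cones, which do not induce a lattice order, arise naturally, so that the full generality of Theorem~\ref{mainthm} is genuinely needed. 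A search restricted to simplicial $K$ would not succeed.

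Once $K$ is written down, the paper's verification also differs in method from what you sketch. Rather than checking a facet-wise Kamke condition, the paper sets $\alpha = f_{11}+f_{12}-f_{13}+f_{21}-f_{22}$ and shows that $J+\alpha I$ maps each extremal ray of $K$ into $\mathrm{int}(K)$, which gives $K$-irreducibility and hence strong monotonicity via standard results. The check that $\nabla H = (1,1,2)^T \in \mathrm{int}(K^*)$ also requires the specific description of $K$ (your observation that its components are positive only shows it lies in the dual of the orthant, not of $K$). Everything else in your outline---$H$ being a first integral, boundedness of level sets, and the invocation of Theorem~\ref{mainthm}---is correct and in line with the paper.
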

\begin{proof}
Note first that the assumptions $f_{13} < 0$, $f_{21} > 0$ and $f_{22} < 0$ are satisfied if both reactions are reversible and common kinetics (including, for example, mass-action kinetics) are assumed. It would not be reasonable to assume that $f_{11} > 0$ or $f_{12} > 0$ everywhere on $\mathbb{R}^3_{\geq 0}$: in particular, physical constraints mean that either $x_1 = 0$ or $x_2 = 0$ imply $f_1 = 0$ and hence $f_{11} = f_{12} = 0$. 

Define $Y \equiv \mathbb{R}^3_{\geq 0}$. In \cite{banajidynsys} it was shown that (\ref{chemsys}) preserves a proper cone
\[
K = \{(x_1, x_2, x_3)\in \mathbb{R}^3\,:\, x_3 \geq 0, x_1 + x_3 \geq 0, x_2 + x_3 \geq 0, x_1 +x_2 + x_3 \geq 0\}. 
\]
Certainly any nonnegative vector satisfies these inequalities, so $K \supset Y$. However $Y$ is a proper subset of $K$. Defining $J$ to be the Jacobian of (\ref{chemsys}), $\alpha \equiv f_{11} + f_{12} - f_{13} + f_{21} - f_{22}$, and $J^{'} \equiv J + \alpha I$, direct calculation gives that, with $f_{13} < 0$, $f_{21} > 0$ and $f_{22} < 0$, $J^{'}$ maps each extremal vector of $K$ into the interior of $K$, and hence is $K$-irreducible. By results in \cite{hirschsmithalt}, the flow generated by (\ref{chemsys}) is strongly monotone with respect to the order generated by $K$. 

We can check that $\nabla H = [1,1,2]^T \in \mathrm{int}(K^*)$. First, for $x = [x_1, x_2, x_3]^T \in K$, $\nabla H \cdot x = x_1 + x_2 + 2x_3 = (x_1 +x_2 + x_3) + x_3 \geq 0$. Second, if $x_1 + x_2 + 2x_3 = 0$, then we must have both $x_1 +x_2 + x_3 = 0$ and $x_3 = 0$, implying both that $x_1 + x_2 = 0$ and $x_1, x_2 \geq 0$. Thus $\nabla H \cdot x = 0 \Rightarrow x_1, x_2,x_3 = 0$, confirming that $\nabla H \in \mathrm{int}(K^*)$. 

All level sets are planar and are bounded. Thus by Lemma~\ref{boundedeq}, each level set contains a unique equilibrium, and by Theorem~\ref{mainthm}, all trajectories on a level set converge to this equilibrium. 
\end{proof}\\

\section{Concluding remarks}

We note that at several points in the proof of our main result, a reduction in generality would have considerably simplified the arguments. Most dramatic of all, restricting to linear first integrals would have made it immediate that the portions of level sets termed $S(c, h)$ were topologically balls. Alternatively insisting that the ordering defined by $K$ made $Y$ into a lattice would have allowed a rapid proof of the fact that the equilibrium set was ordered, and again greatly simplified the paper. However, as the example above illustrates, non-simplicial preserved cones which do not induce a lattice ordering on $\mathbb{R}^n$ may arise naturally in applications. 

Some of the results in this paper extend, with only minor modifications, to the case where monotonicity is not necessarily strong. On the other hand, the simple structure of the equilibrium set, key to global convergence, is no longer automatic. We will in future work consider nontrivial extensions removing the requirement of strong monotonicity, and to situations where there may be more than one integral, as arise freqently in applications from chemistry.

\bibliographystyle{siam}

\end{document}